\makeatletter \@addtoreset{equation}{section} \makeatother
\renewcommand\thetable{\thesection.\@arabic\c@table}
\theoremstyle{plain}
\newtheorem{maintheorem}{Theorem}
\newtheorem{maincorollary}{Corollary}
\newtheorem{mainlemma}{Lemma}
\newtheorem{mainquestion}{Question}
\newtheorem{Thm}{Theorem}[section]
\newtheorem{Lem}[Thm]{Lemma}
\newtheorem{Prop}[Thm]{Proposition}
\newtheorem{Cor}[Thm]{Corollary}
\theoremstyle{remark}
\newtheorem{Def}[Thm] {Definition}
\newtheorem{Rem}[Thm] {Remark}
\begin{document}

\title{Different Statistical behaviors of orbits}

\author{Yiwei Dong, Xiaobo Hou, Wanshan Lin and Xueting Tian}

\address{Yiwei Dong, School of Mathematical Sciences,  Fudan University\\Shanghai 200433, People's Republic of China}
\email{dongyiwei06@gmail.com}

\address{Xiaobo Hou, School of Mathematical Sciences,  Fudan University\\Shanghai 200433, People's Republic of China}
\email{20110180003@fudan.edu.cn}

\address{Wanshan Lin, School of Mathematical Sciences,  Fudan University\\Shanghai 200433, People's Republic of China}
\email{21110180014@m.fudan.edu.cn}

\address{Xueting Tian, School of Mathematical Sciences,  Fudan University\\Shanghai 200433, People's Republic of China}
\email{xuetingtian@fudan.edu.cn}

\begin{abstract}
In this paper, we will study the statistical behaviors of orbits. Firstly, we will show that for a dynamical systems have the shadowing property or almost specification property, the set of nonrecurrent points has full topological entropy. After that, we introduce a criteria for classification of dynamical orbits in order to study the complexity theory of dynamical systems. The criteria is to use upper and lower natural density, upper and lower Banach density to divide different statistical future of dynamical orbits into 56 cases, 28 cases for recurrent orbits and 28 cases for nonrecurrent orbits. We will show the existence of 50 cases and for topologically transitive topologically expanding or topologically transitive topologically Anosov dynamical systems, we will prove that 35 classes, including all the 28 cases for nonrecurrent orbits, can carry full topological entropy. Besides, we will prove that 9 cases can be observable in some differential dynamical systems. Finally, we will apply our results to $\beta$-shifts, $C^{1+\alpha}$ surface diffeomorphisms and Mañé diffeomorphisms.
\end{abstract}

\keywords{Nonrecurrent Points; Shadowing property; Topological entropy; Statistical $\omega$-limit set}
%\tableofcontents
\subjclass[2020] {37D20;  37C50;  37B20;  37B40;  37C45}
\maketitle

\setcounter{tocdepth}{1}
\tableofcontents

\section{Introduction}
In the theory of  dynamical systems, i.e., the study of the asymptotic
behaviors of trajectories or orbits $\{f^n (x)\}_{n\geq 0}$ when $f:X\rightarrow X$  is a continuous map of a compact
metric space $X$ (called a \textit{topological dynamical system}), one may say that two fundamental problems are to
understand how to partition different asymptotic behaviors and how the points with same asymptotic behavior control or determine the complexity of system $(X,f)$. There are two main methods to study asymptotic behavior: one   is from the perspective of topology (or geometry) and another   is  from the perspective of statistics (or measure).
Stable manifold theorem is a classical way to study asymptotic behavior of dynamical orbits. Stable and unstable manifolds are well learned from the perspective of topology (or geometry) in the sense of product structure, smooth regularity and absolute continuity etc, especially in the study of smooth chaotic dynamics including hyperbolic systems, non-uniformly hyperbolic systems and partially hyperbolic systems, see for example \cite{HPS,Katok1,BP,Bowen2}.  In this paper we mainly pay attention to asymptotic behavior from the statistical perspective.

A classic partitioning method is to divide the points of $X$ into two parts, one is recurrent points (denoted by $Rec(f)$), the other is nonrecurrent points (denoted by $NRec(f)$). For $Rec(f)$, from the probabilistic perspective, by Poincaré recurrence theorem, it has full measure for any invariant measure, and thus has full topological entropy, see for example, \cite[Theorem 3.1]{T16}. In fact, in \cite{HTW} and \cite{T16}, a more detailed analysis relative to the topological entropy of Banach upper recurrent points, upper recurrent points, lower recurrent points and almost periodic points (which coincides with Banach lower recurrent points) was obtained for dynamical systems with specification-like property and expansiveness. A natural question is whether $NRec(f)$ can also carry full topological entropy.  For a general dynamical system, $NRec(f)$ may be empty, such as the minimal dynamical systems. A choice is to consider dynamical systems having shadowing property or specification-like properties. These properties were derived from the study of Axiom A diffeomorphisms and have been studied for a long time. Many interesting results were obtained for these properties and we refer to \cite{Kwietniak-Lacka-Oprocha-2016} for an introduction. Our first main result will show these dynamical systems having the property we want. More precisely, let $h_{top}(f)$ denote the topological entropy of $(X,f)$ and let $h_{top}(f,Z)$ denote the (Bowen) topological entropy of a subset $Z$ of $X$, see section \ref{topological-entropy} for a precise definition, then we have the following.
\begin{maintheorem}\label{Theorem A}
	Suppose that $(X, f)$ satisfies the shadowing property or the almost specification property, then   
	$$h_{top}(f,NRec(f))=h_{top}(f).$$
\end{maintheorem}
The precise definitions of the shadowing property and the almost specification property can be found in section \ref{shadowing} and section \ref{almost-spec} respectively. There are many dynamical systems satisfying the shadowing property or the almost specification property. For example, in the family of tent maps, the shadowing property is satisfied for almost all parameters \cite{CKY}; $C^0$ generic continuous self-maps (resp. homeomorphisms) of a compact smooth Riemannian manifold \cite[Theorem 1]{Mazur-Oprocha} (resp. \cite[Theorem 1]{Pilyugin-Plamenevskaya}) satisfy the shadowing property; every topologically mixing interval maps has the periodic specification property \cite[Theorem 6]{Bl} and thus have the almost specification property.

To obtain more information about the asymptotic behaviors of orbits, we will adopt a more refined hierarchical approach. We denote the sets of positive integers, integers and nonnegative integers by $\mathbb{N},  \mathbb{Z},  \mathbb{Z}^+$ respectively.
For any $x\in X$,   the orbit of $x$ is $\{f^n(x)\}_{n=0}^{\infty}$ which we denote by $\operatorname{orb}(x,  f)$.
The $\omega$-limit set of $x$ is defined as
$$\omega_f(x):=\bigcap_{n\geq1}\overline{\bigcup_{k\geq n}\{f^k(x)\}}=\{y\in X: \text{there exist }n_i\to\infty~\text{such that}~f^{n_i}(x)\to y\}.  $$
It is clear that $\omega_f(x)$ is a nonempty compact $f$-invariant set.  Let $S\subseteq \mathbb{N}$,   define
$$\bar{d} (S):=\limsup_{n\rightarrow\infty}\frac{|S\cap \{0,  1,  \cdots,  n-1\}|}n,  \,  \,   \underline{d} (S):=\liminf_{n\rightarrow\infty}\frac{|S\cap \{0,  1,  \cdots,  n-1\}|}n,  $$
where $|Y|$ denotes the cardinality of the set $Y$.   These two concepts are called {\it upper density} and {\it lower density} of $S$,   respectively.   If $\bar{d} (S)=\underline{d}(S)=d,  $ we call $S$ to have \emph{density} of $d.  $  Define $$\overline{B} (S):=\limsup_{|I|\rightarrow\infty}\frac{|S\cap I|}{|I|},  \,  \,   \underline{B} (S):=\liminf_{|I|\rightarrow\infty}\frac{|S\cap I|}{|I|},  $$
here $I\subseteq \mathbb{N}$ is taken from finite continuous integer intervals.   These two concepts are called {\it Banach upper density} and {\it Banach lower density} of $S$,   respectively. These concepts of density are basic and have played important roles in the field of dynamical systems,   ergodic theory and number theory,   etc.
Let $U,  V\subseteq X$ be two nonempty open subsets and $x\in X.  $
Define
sets of visiting time
$$N (U,  V):=\{n\geq 0:  U\cap f^{-n} (V)\neq \emptyset\} \,  \,  \text{ and } \,  \,  N (x,  U):=\{n\ge 0:  f^n (x)\in U\}.  $$

\begin{Def} (Statistical $\omega$-limit sets)
	For $x\in X$ and $\xi=\overline{d},   \,  \underline{d},   \,  \overline{B},  \,   \underline{B}$,   a point $y\in X$ is called $x$-$\xi$-accessible,   if for any $\varepsilon>0,  \,  N (x,  B(y,\varepsilon))\text{ has positive   density w.  r.  t.   }\xi,  $ where     $B(y,\varepsilon)$ denotes  the  ball centered at $y$ with radius $\varepsilon$.
	Let $$\omega_{\xi}(x):=\{y\in X:   y\text{ is } x\text{-}\xi\text{-}\text{accessible}\}.$$  For convenience,   it is called {\it $\xi$-$\omega$-limit set of $x$} or {\it $\xi$-center of $x$}.
\end{Def}

\begin{Rem}
	We learned from \cite{AAN} for maps and \cite{AF} for flows  that $\omega_{\overline{d}}(x)$ is called essential $\omega$-limit set of $x$.
\end{Rem}
Note that
\begin{equation*}\label{X-three-omega}
	\emptyset\subseteq\omega_{\underline{B}}(x)\subseteq \omega_{\underline{d}}(x)\subseteq \omega_{\overline{d}}(x)\subseteq \omega_{\overline{B}}(x)\subseteq \omega_f(x)\subseteq\overline{\operatorname{orb}(x,  f)}.
\end{equation*} 
\subsection{Basic characterization of $\xi$-$\omega$-limit sets} It is easy to check that $\xi$-$\omega$-limit set is compact and invariant (with possibility that some sets are empty). Now, let $M(X)$ denote the space of Borel probability measures on $X$ endowed with weak$^*$ topology. The sets of probability measures, $f$-invariant measures and $f$-ergodic measures supported on $Y\subseteq X$ are denoted by $M(Y),$ $M(f,  Y)$ and $M_{erg}(f,  Y)$ respectively. For $x\in X$ and $n\in\mathbb{N}$,   we define the empirical measure of $x$ as
\begin{equation*}
	\mathcal{E}_{n}(x):=\frac{1}{n}\sum_{j=0}^{n-1}\delta_{f^{j}(x)},
\end{equation*}
where $\delta_{x}$ is the Dirac mass at $x$.    We denote the set of limit points of $\{\mathcal{E}_{n}(x)\}$ by $V_{f}(x)$.   As is known,   $V_f(x)$ is  a nonempty compact connected subset of $M(f,  X)$ \cite[Proposition 3.8]{DGS}.
{ For any two nonnegative integers $a_k<b_k$,   denote
	$[a_k,  b_k]=\{a_k,  a_k+1,  \cdots,  b_k\}$ and $[a_k,  b_k)=[a_k,  b_k-1],  (a_k,  b_k)=[a_k+1,  b_k-1],  (a_k,  b_k]=[a_k+1,  b_k]$.
	A point $x$ is called {\it quasi-generic} for some measure $\mu,  $ if there is a sequence of positive integer intervals $I_k=[a_k,  b_k)$ with $b_k-a_k\to\infty$ such that
	$$\lim_{k\rightarrow\infty}\frac{1}{b_k-a_k}\sum_{j=a_k}^{b_k-1}\delta_{f^j(x)}=\mu$$
	in weak$^*$ topology.
}
Let $V_f^*(x)=\{\mu\in M(f,  X): \,  x \text{ is quasi-generic for } \mu\}$.
This concept is from \cite{Furst} and from \cite[Lemma 2.2]{Wang-Huang-2018}, it is known $V_f^*(x)$ is always nonempty, compact and connected.   Note that $V_f(x)\subseteq V_f^*(x).  $ Let $S_\mu$ denote the support of $\mu.$ We give the basic characterization of $\xi$-$\omega$-limit sets.

\begin{Thm}\label{thm-density-basic-property} Suppose $(X,  f)$ is a topological dynamical system.
	\begin{description}
		\item[(1)] For any $x\in X,   $ $\omega_{\underline{d}}(x)= \bigcap_{\mu\in V_f(x)} S_\mu$.
		\item[(2)] For any $x\in X,   $  $\omega_{\overline{d}}(x)=\overline{\bigcup_{\mu\in V_f(x)} S_\mu}\neq \emptyset$.
		\item[(3)] For any $x\in X,   $  $\omega_{\underline{B}}(x)= \bigcap_{\mu\in V^*_f(x)} S_\mu =  {\bigcap_{\mu\in M(f,  \omega_f(x))}S_{\mu}}= {\bigcap_{\mu\in M_{erg}(f,  \omega_f(x))}S_{\mu}}.  $\,  \,  \,   If \,
		$\omega_{\underline{B}}(x)\neq \emptyset$,   then $ \omega_{\underline{B}}(x)$ is minimal.
		\item[(4)] For any $x\in X,   $ $\omega_{\overline{B}}(x)= \overline{\bigcup_{\mu\in V^*_f(x)} S_\mu}=\overline{\bigcup_{\mu\in M(f,  \omega_f(x))}S_{\mu}}=\overline{\bigcup_{\mu\in M_{erg}(f,  \omega_f(x))}S_{\mu}}\neq \emptyset;$
		
		\item[(5)] For any invariant measure $\mu$ and $\mu$-a.e. $x\in X,$
		$ \omega_{\underline{d}}(x)= \omega_{\overline{d}}(x)= \omega_{\overline{B}}(x)= \omega_f(x)=\overline{\operatorname{orb}(x,f)}.  $ If further $\mu$ is ergodic, then $\mu$-a.e.   $x\in X,   $
		$ \omega_{\underline{d}}(x)= \omega_{\overline{d}}(x)= \omega_{\overline{B}}(x)= \omega_f(x)=\overline{\operatorname{orb}(x,f)}=S_\mu.  $
	\end{description}
\end{Thm}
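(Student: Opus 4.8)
My plan is to translate every statistical accessibility condition into a statement about the weak-$*$ accumulation points of empirical distributions, after which all four identities follow from the portmanteau theorem and the compactness of $M(f,X)$. First I would set up the dictionary. For $x\in X$, an open set $U$, and a finite integer interval $I$, write $\nu_I^x:=\frac1{|I|}\sum_{j\in I}\delta_{f^jx}$; then, up to the harmless one-step shift in the definition of $N(x,\cdot)$, one has $\overline d(N(x,U))=\limsup_n\mathcal{E}_n(x)(U)$, $\underline d(N(x,U))=\liminf_n\mathcal{E}_n(x)(U)$, $B^{*}(N(x,U))=\limsup_{|I|\to\infty}\nu_I^x(U)$ and $B_{*}(N(x,U))=\liminf_{|I|\to\infty}\nu_I^x(U)$. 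I would also record that every weak-$*$ limit of $\{\mathcal{E}_n(x)\}$ lies in $V_f(x)\subseteq M(f,X)$, that every weak-$*$ limit of $\nu_{I_k}^x$ with $|I_k|\to\infty$ is $f$-invariant and hence lies in $V_f^*(x)$, and that the support of any such limit is contained in $\omega_f(x)$ (a point at positive distance from $\omega_f(x)$ is visited only finitely often, hence receives zero mass). Together with the elementary chain $B_*\le\underline d\le\overline d\le B^*$ already used for \eqref{X-three-omega}, this is essentially the whole toolkit.

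With this in hand, parts (1) and (2) are immediate. For (1), if $y\notin S_\mu$ for some $\mu\in V_f(x)$ I would choose $\epsilon$ with $\mu(\overline{B_\epsilon(y)})=0$; upper semicontinuity of $\nu\mapsto\nu(\overline{B_\epsilon(y)})$ forces $\mathcal{E}_{n_k}(x)(B_\epsilon(y))\to0$ along a subsequence realizing $\mu$, so $\underline d(N(x,B_\epsilon(y)))=0$ and $y\notin\omega_{\underline d}(x)$; conversely, if $\liminf_n\mathcal{E}_n(x)(B_\epsilon(y))=0$ for some $\epsilon$, then lower semicontinuity on open sets applied to a weak-$*$ limit $\mu$ of a subsequence attaining that liminf gives $\mu(B_\epsilon(y))=0$, so $y\notin\bigcap_{\mu\in V_f(x)}S_\mu$. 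Part (2) is the same argument with $\limsup$ in place of $\liminf$, plus the fact that $\omega_{\overline d}(x)$ is closed. The first equality in each of (3) and (4), namely $\omega_{\underline B}(x)=\bigcap_{\mu\in V_f^*(x)}S_\mu$ and $\omega_{\overline B}(x)=\overline{\bigcup_{\mu\in V_f^*(x)}S_\mu}$, I would prove identically, now with the interval averages $\nu_I^x$ and quasi-genericity in place of $\mathcal{E}_n(x)$ and $V_f(x)$. Nonemptiness in (2) and (4) is clear since $V_f(x),V_f^*(x)\ne\emptyset$ and supports of measures are nonempty.

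The part that needs real work is the chain of equalities in (3) and (4) relating $V_f^*(x)$, $M(f,\omega_f(x))$ and $M_{erg}(f,\omega_f(x))$. Here I would establish the two inclusions $M_{erg}(f,\omega_f(x))\subseteq V_f^*(x)\subseteq M(f,\omega_f(x))$. The right inclusion is the support remark above. For the left one: given ergodic $\nu$ with $\nu(\omega_f(x))=1$, Birkhoff's theorem yields a point $z\in\omega_f(x)$ generic for $\nu$; writing $z=\lim_i f^{m_i}(x)$ and using continuity of $f^0,\dots,f^{k-1}$, a diagonal argument over $k\to\infty$ along suitable $i=i(k)$ shows the intervals $[m_{i(k)},\,m_{i(k)}+k)$ witness $\nu\in V_f^*(x)$; more generally, any measure quasi-generic for a point of $\omega_f(x)$ is quasi-generic for $x$. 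Taking intersections (respectively closures of unions) and using the ergodic decomposition together with $S_\nu=\overline{\bigcup_\xi S_{\nu_\xi}}$ over almost every ergodic component $\nu_\xi$ then collapses the three quantities to one. For the minimality clause of (3): if $\omega_{\underline B}(x)\ne\emptyset$, pick a minimal set $M\subseteq\omega_{\underline B}(x)$ and $\mu\in M_{erg}(f,M)$; then $S_\mu=M$, and since $M\subseteq\omega_f(x)$ we have $\mu\in M(f,\omega_f(x))$, so $\omega_{\underline B}(x)=\bigcap_{\lambda\in M(f,\omega_f(x))}S_\lambda\subseteq S_\mu=M$, forcing $\omega_{\underline B}(x)=M$ to be minimal.

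For part (5) I would invoke the ergodic decomposition theorem: for $\mu$-a.e.\ $x$ the empirical measures $\mathcal{E}_n(x)$ converge to the ergodic component $\mu_x$, so $V_f(x)=\{\mu_x\}$, and moreover $x\in S_{\mu_x}$. Then (1) gives $\omega_{\underline d}(x)=S_{\mu_x}$, while $x\in S_{\mu_x}$ with $S_{\mu_x}$ closed and forward invariant gives $\omega_f(x)\subseteq S_{\mu_x}$; combined with the chain $\omega_{\underline d}(x)\subseteq\omega_{\overline d}(x)\subseteq\omega_{\overline B}(x)\subseteq\omega_f(x)$ from \eqref{X-three-omega}, everything is squeezed between $S_{\mu_x}$ and itself, and when $\mu$ is ergodic $\mu_x=\mu$ for $\mu$-a.e.\ $x$, giving the final claim. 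The main obstacle is exactly the equality chain in (3)/(4): $V_f^*(x)$ and $M(f,\omega_f(x))$ are genuinely different families in general (the former may contain ``few'' non-ergodic measures, the latter measures not quasi-generic for $x$ at all), so one cannot avoid routing through the ergodic decomposition and the propagation-of-quasi-genericity observation; the rest reduces to portmanteau plus compactness.
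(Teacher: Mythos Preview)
Your proposal is correct and follows essentially the same route as the paper: both reduce the four identities to portmanteau-type semicontinuity estimates applied to (sub)sequential weak-$*$ limits of the empirical/interval measures, and both obtain the chain in (3)--(4) via the inclusions $M_{erg}(f,\omega_f(x))\subseteq V_f^*(x)\subseteq M(f,\omega_f(x))$ (the paper isolates this as a separate lemma citing Furstenberg's Proposition 3.9, and proves the collapse $\bigcap_{M_{erg}}S_\mu=\bigcap_{M}S_\mu$ via ergodic decomposition exactly as you do).

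The one place you diverge is the minimality clause in (3). The paper proves it \emph{before} the measure identity, by a direct syndetic argument: it shows that for every $y\in\omega_f(x)$ one has $\omega_{\underline B}(x)\subseteq\overline{\mathrm{orb}(y,f)}$ (using that $N(x,B_\epsilon(z))$ is syndetic to push $z$ into the orbit closure of any $\omega$-limit point), then intersects with a minimal subset of $\omega_f(x)$. You instead deduce minimality \emph{after} the identity $\omega_{\underline B}(x)=\bigcap_{\mu\in M(f,\omega_f(x))}S_\mu$ by picking a minimal $M\subseteq\omega_{\underline B}(x)$ and noting that the unique ergodic measure on $M$ already appears in the intersection. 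Your argument is shorter and avoids the auxiliary syndetic lemma; the paper's has the advantage of yielding the stronger intermediate fact $\omega_{\underline B}(x)\subseteq\overline{\mathrm{orb}(y,f)}$ for every $y\in\omega_f(x)$, which it does not otherwise use here.
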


\begin{Def}
	Given $\alpha_1\alpha_2\cdots\alpha_6\in\{0,1\}^6$ and $x\in X$, we say that $x$ satisfies \textbf{Case~}($\alpha_1\alpha_2\cdots\alpha_6$) if 
	$$\emptyset\star_1\omega_{\underline{B}}(x)\star_2\omega_{\underline{d}}(x)\star_3 \omega_{\overline{d}}(x)\star_4\omega_{\overline{B}}(x)\star_5 \omega_f(x)\star_6\overline{\operatorname{orb}(x,  f)},$$ where 
	\[\star_i=
	\begin{cases}
		\subsetneq &\text{if }\alpha_i=0,\\
		= &\text{if }\alpha_i=1.
	\end{cases}\]
\end{Def}

Given $x\in X$, by Theorem \ref{thm-density-basic-property}, we always have that $\omega_{\overline{d}}(x)\neq\emptyset$ and thus there are $56$ cases at most. Denote $$\mathscr{C}(\alpha_1\alpha_2\cdots\alpha_6):=\{x\in X: x\text{ satisfies \textbf{Case}~}(\alpha_1\alpha_2\cdots\alpha_6)\}.$$Then $$\mathscr{C}(111\alpha_4\alpha_5\alpha_6)=\emptyset\text{ for any }111\alpha_4\alpha_5\alpha_6.$$
Denote $$\mathfrak{A}:=\{0,1\}^6\setminus\{111\alpha_4\alpha_5\alpha_6:\alpha_4\alpha_5\alpha_6\in\{0,1\}^3\}.$$
It is clear that $x\in Rec(f)$ if and only if $x$ satisfies \textbf{Case} ($\alpha_1\alpha_2\alpha_3\alpha_4\alpha_51$) for some $\alpha_1\alpha_2\alpha_3\alpha_4\alpha_51\in\mathfrak{A}$; $x\in NRec(f)$ if and only if $x$ satisfies \textbf{Case} ($\alpha_1\alpha_2\alpha_3\alpha_4\alpha_50$) for some $\alpha_1\alpha_2\alpha_3\alpha_4\alpha_50\in\mathfrak{A}$. The following is a natural question.
\begin{mainquestion}\label{Question A}
 Given one of the $56$ cases, does there always exist a dynamical system $(X,f)$ and $x\in X$ satisfying this case?
\end{mainquestion}
\subsection{The existence of $50$ cases}
It can be easily checked by Theorem \ref{thm-density-basic-property} that $x$ is a point of a minimal dynamical system if and only if it satisfies \textbf{Case} (011111). Since every dynamical system has a minimal subsystem, we can always find points satisfying \textbf{Case} (011111) in any dynamical system. Katznelson and Weiss \cite{Katznelson-Weiss-1981} constructed a dynamical system $(X,f)$ has a unique minimal subsystem $(Y,f)$ with $Y\subsetneq X$ and an ergodic measure $\mu$ with $Y\subsetneq S_\mu$, then by Theorem \ref{thm-density-basic-property}(5), for $\mu$-a.e. $x\in X$, one has $x$ satisfies \textbf{Case} (001111). For non-uniquely ergodic dynamical system satisfying $g$-almost product property, uniform separation property and having an invariant measure with full support, Huang, Tian and Wang \cite{HTW} proved that for any $1\alpha_2\alpha_3\alpha_411\in\mathfrak{A}$, $\mathscr{C}(1\alpha_2\alpha_3\alpha_411)$ has full topological entropy. For one-sided full shifts, Jiang and Tian \cite{Jiang-Tian-2024} find that for any $1\alpha_2\alpha_3\alpha_401\in\mathfrak{A}$,  $\mathscr{C}(1\alpha_2\alpha_3\alpha_401)$ is dense and contains an uncountable DC-1 scrambled subset, see \cite[Lemma 5.1 and Lemma 5.2]{Jiang-Tian-2024}. Overall, the existence of the above $14$ cases was already known. Our second main result will show the existence of other 36 cases.
\begin{maintheorem}\label{Theorem A-new}
	There exists a dynamical system such that for any $0\alpha_2\alpha_3\alpha_401\in\mathfrak{A}$ and any $\beta_1\beta_2\beta_3\beta_4\beta_50\in\mathfrak{A}$, there always exists a point satisfying \textbf{Case} $(0\alpha_2\alpha_3\alpha_401)$ and a point satisfying \textbf{Case} $(\beta_1\beta_2\beta_3\beta_4\beta_50)$.
\end{maintheorem}
\begin{Rem}
	To the best of our knowledge, the existence of \textbf{Case} $(011011)$, \textbf{Case} $(010111)$, \textbf{Case} $(010011)$, \textbf{Case} $(001011)$, \textbf{Case} $(000111)$ and \textbf{Case} $(000011)$ remains unknown. Our results show the existence of all the 28 cases for nonrecurrent orbits. In other words, if we don't consider the relationship between $\omega_f(x)$ and $\overline{\operatorname{orb}(x,f)}$, then all the possible cases do exist.
\end{Rem}
At the end of this subsection, we put a more general question than Question \ref{Question A}. Given a dynamical system $(X,f)$, there exists $\{011111\}\in\mathfrak{A}'\subset \mathfrak{A}$  such that $X=\bigcup_{\alpha_1\alpha_2\cdots\alpha_6\in\mathfrak{A}'}\mathscr{C}(\alpha_1\alpha_2\cdots\alpha_6).$ So from this point of view, there are at most $2^{55}$ kinds of dynamical systems. From the inverse direction, we have the following question.
\begin{mainquestion}
	Suppose that $\mathfrak{A}'$ containing $\{011111\}$ is a subset of $\mathfrak{A}$, then does there exist a dynamical system $(X,f)$ such that $$X=\bigcup_{\alpha_1\alpha_2\cdots\alpha_6\in\mathfrak{A}'}\mathscr{C}(\alpha_1\alpha_2\cdots\alpha_6).$$
\end{mainquestion}

\subsection{The dynamical complexity of $\mathscr{C}(\alpha_1\alpha_2\cdots\alpha_6)$}
In this subsection, we aim to study the dynamical complexity of $\mathscr{C}(\alpha_1\alpha_2\cdots\alpha_6)$ for $\alpha_1\alpha_2\cdots\alpha_6\in\mathfrak{A}$. Due to Theorem \ref{thm-density-basic-property}(5), the set $$\mathscr{C}(001111)\cup\mathscr{C}(011111)\cup\mathscr{C}(101111)$$ always has full measure for any invariant measure and thus has full topological entropy. Hence, the other cases are not detectable from the point of view of any invariant measure. Inspired by the long-standing study of irregular sets, we can also study $\mathscr{C}(\alpha_1\alpha_2\cdots\alpha_6)$ in the sense of topological entropy, Lebesgue measure,  winning set for Schmidt's game \cite{Schmidt-1966} and so on. 

From the point of view of topological entropy, we have the following question.
\begin{mainquestion}
	For which case, can we find a dynamical system such that the set of points satisfying this case is nonempty and has full topological entropy? 
\end{mainquestion}
We will give a partial answer to this question by considering a classical class of dynamical systems. 
\begin{Def}
	A topological dynamical system $(X,f)$ is called \emph{topologically expanding} if $X$ has infinitely many points,  $(X,f)$ is positively expansive and satisfies the shadowing property. When $f$ is a homeomorphism, $(X,f)$ is called \emph{topologically Anosov} if $X$ has infinitely many points,  $(X,f)$ is expansive and satisfies the two-sided shadowing property.
\end{Def}
\begin{Rem}
	From \cite[Corollary 4]{Moot} if  a dynamical system with shadowing property has a recurrent but not minimal point, then the system has positive topological entropy.  Since topologically expanding and topologically Anosov both imply the existence of periodic orbits, hence for topologically transitive topologically expanding dynamical systems and topologically transitive topologically Anosov dynamical systems, every transitive point is recurrent but not minimal, and thus these dynamical systems have positive topological entropy.
\end{Rem}
\begin{Rem} 
	There are topologically Anosov but not Anosov diffeomorphisms. From \cite{Go} we know the existence of $C^{1+Lip}$ non-Anosov diffeomorphisms, which are conjugated to a transitive Anosov diffeomorphism and the conjugation and its inverse is H\"older  continuous.
\end{Rem}
Denote $$\mathfrak{A}_h:=\{011111\}\cup\{1\alpha_2\alpha_3\alpha_411:1\alpha_2\alpha_3\alpha_411\in\mathfrak{A}\}\cup\{\alpha_1\alpha_2\alpha_3\alpha_4\alpha_50:\alpha_1\alpha_2\alpha_3\alpha_4\alpha_50\in\mathfrak{A}\}.$$
\begin{maintheorem}\label{Theorem B}
	Suppose that $(X,f)$ is topologically transitive topologically expanding or topologically transitive topologically Anosov, then for any nonempty open set $U\subset X$ and any  $\alpha_1\alpha_2\cdots\alpha_6\in\mathfrak{A}_h$, we have $$h_{top}(f,\mathscr{C}(\alpha_1\alpha_2\cdots\alpha_6)\cap U)=h_{top}(f)>0.$$
\end{maintheorem}
From the point of view of Lebesgue measure, for differential dynamical systems, we have the following question.
\begin{mainquestion}
	For which case, can we find a differential dynamical system such that the Lebesgue measure of the set of points satisfying this case is positive?
\end{mainquestion}
Denote $$\mathfrak{A}_L:=\{011111, 011110, 101100, 011100, 101111, 101110, 101011,100100,110100\}$$
\begin{maintheorem}\label{Theorem D}
For any  $\alpha_1\alpha_2\cdots\alpha_6\in\mathfrak{A}_L$, there is a differential dynamical system such that $\mathscr{C}(\alpha_1\alpha_2\cdots\alpha_6)$ has positive Lebesgue measure.
\end{maintheorem}

From the point of view of winning set for Schmidt's game, we have the following question. 
\begin{mainquestion}
	For which cases, can we find a dynamical system  such that the set of points  satisfying these cases is a winning set for Schmidt's game? Moreover, for topologically transitive topologically expanding or topologically transitive topologically Anosov dynamical system, what is the minimum cardinality of the subset $\mathfrak{A}''\subset\mathfrak{A}$ satisfying that $$\bigcup_{\alpha_1\alpha_2\cdots\alpha_6\in\mathfrak{A}''}\mathscr{C}(\alpha_1\alpha_2\cdots\alpha_6)$$ is winning?
\end{mainquestion}

\subsection{Applications} Recall from \cite{Walters2} a subshift satisfies shadowing property if and only if it is a subshift of finite type. As a subsystem of one-sided full shift (resp. two-sided full shift), it is positively expansive (resp. expansive). Hence, the results in Theorem \ref{Theorem B} hold for every topologically transitive subshift of finite type with infinite points. In most cases, the dynamical system itself is not topologically transitive topologically expanding or topologically transitive topologically Anosov. If a dynamical system has a sequence of topologically transitive topologically expanding or topologically transitive topologically Anosov subsystems whose topological entropy can be made arbitrarily close to  that of the original system, then results similar to Theorem \ref{Theorem B} still hold. We use this idea to analysis $\beta$-shifts, $C^{1+\alpha}$ surface diffeomorphisms and Mañé diffeomorphisms.
\subsubsection{$\beta$-shifts} Since all $\beta$-shifts have $g$-almost product property \cite{PS2007} and thus have almost specification property, by Theorem \ref{Theorem A}, we have that their nonrecurrent points have full topological entropy. In fact, the results in Theorem \ref{Theorem B} also hold.
\begin{maincorollary}\label{Corollary A}
	Suppose that $(X,f)$ is a $\beta$-shift, then for any nonempty open set $U\subset X$ and any  $\alpha_1\alpha_2\alpha_3\alpha_4\alpha_5\alpha_6\in\mathfrak{A}_h$, we have $$h_{top}(f,\mathscr{C}(\alpha_1\alpha_2\alpha_3\alpha_4\alpha_5\alpha_6)\cap U)=h_{top}(f)>0.$$
\end{maincorollary}

\subsubsection{$C^{1+\alpha}$ surface diffeomorphisms} Let $M$ be a compact smooth Riemannian manifold with $\dim M=2$, then a diffeomorphism on $M$ is called a surface diffeomorphism, for surface diffeomorphism, we have  
\begin{maincorollary}\label{Corollary B}
	Suppose that $f$ is a $C^{1+\alpha}$ surface diffeomorphism on a compact smooth Riemannian manifold $M$ with $\dim M=2$, then for any  $\alpha_1\alpha_2\alpha_3\alpha_4\alpha_5\alpha_6\in\mathfrak{A}_h$, we have $$h_{top}(f,\mathscr{C}(\alpha_1\alpha_2\alpha_3\alpha_4\alpha_5\alpha_6))=h_{top}(f).$$
\end{maincorollary}

\subsubsection{Mañé diffeomorphisms} Mañé diffeomorphisms, which are derived from a hyperbolic toral automorphism on $\mathbb{T}^d$ ($d\geq3$), were introduced by Ma\~{n}\'{e} \cite{Mane-1978}, we also refer to \cite{Climenhaga-Fisher-Thompson-2019} to find a detailed explanation for the construction. We use the notations in \cite{Climenhaga-Fisher-Thompson-2019}, let $f_M:\mathbb{T}^d:\to\mathbb{T}^d$ be a Ma\~{n}\'{e} diffeomorphisms derived from a hyperbolic toral automorphism $f_A$; let $\rho,r>0$ be two parameters controlled in the construction in \cite[section 4]{Climenhaga-Fisher-Thompson-2019}; let $q$ be the fixed point of $f_A$ satisfying that $f_M=f_A$ on $\mathbb{T}^d\setminus B(q,\rho)$ and if an orbit spends a proposition at least $r$ of its time outside $B(q,\rho)$, then it contracts vectors in the one-dimensional center bundle; let $\mathcal{U}_{\rho,r}$ be the sufficient small $C^1$ open neighborhood of $f_M$ in the space of $C^1$ diffeomorphisms on $\mathbb{T}^d$; let $h$ denote the topological entropy of $f_A$ and $L$ be a constant depending on $f_A$ and $g\in\mathcal{U}_{\rho,r}$ and $H(r)=-r\log r-(1-r)\log (1-r)$. 
\begin{maincorollary}\label{Corollary C}
	Suppose that $g\in\mathcal{U}_{\rho,r}$ satisfying that $$r(h+\log L)+H(2r)<h_{top}(g).$$ Then for any  $\alpha_1\alpha_2\alpha_3\alpha_4\alpha_5\alpha_6\in\mathfrak{A}_h$, we have $$h_{top}(g,\mathscr{C}(\alpha_1\alpha_2\alpha_3\alpha_4\alpha_5\alpha_6))=h_{top}(g)>0.$$
\end{maincorollary}

Finally, we pose an inverse question.
\begin{mainquestion}
	If a dynamical system has points satisfying each case of 56 cases, what information can we obtain for the dynamical complexity of this dynamical system?
\end{mainquestion}
\textbf{Organization of this paper.} In section \ref{section 2}, we will introduce some preliminaries. In section \ref{section 3}, we will prove Theorem \ref{Theorem A} assuming Theorem \ref{Theorem B}. In section \ref{section 4}, we will give the basic characterization of $\xi$-$\omega$-limit sets and prove Theorem \ref{thm-density-basic-property}. In section \ref{section 4 new}, we will prove Theorem \ref{Theorem A-new} assuming Theorem \ref{Theorem B}. In section \ref{section 6}, we will study the saturated property, locally-saturated property and locally-star-saturated property. In section \ref{section 7}, we will give the proof of Theorem \ref{Theorem B} based on the results obtained in section \ref{section 4} and section \ref{section 6}. In section \ref{section leb}, we will give the proof of Theorem \ref{Theorem D}.  In section \ref{section 8}, we will give the applications of our main results and prove Corollary \ref{Corollary A}, Corollary \ref{Corollary B} and Corollary \ref{Corollary C}.

\section{Preliminaries}\label{section 2}
\subsection{Metric compatible with the weak$^*$ topology}
The space of Borel probability measures on $X$ is denoted by $M(X)$ and the set of continuous functions on $X$ by $C(X)$.  The set of probability measures, $f$-invariant measures and $f$-ergodic measures supported on $Y\subseteq X$ are denoted by $M(Y),$ $M(f,  Y)$ and $M_{erg}(f,  Y)$ respectively. We endow $\varphi\in C(X)$ the norm $\|\varphi\|=\max\{|\varphi(x)|:x\in X\}$.
Let ${\{\varphi_{j}\}}_{j\in\mathbb{N}}$ be a dense subset of $C(X)$ with $\|\varphi_j\|\neq0$,   then
$$\rho(\xi,  \tau)=\sum_{j=1}^{\infty}\frac{|\int\varphi_{j}d\xi-\int\varphi_{j}d\tau|}{2^{j}\|\varphi_{j}\|}$$
defines a metric on $M(X)$ for the weak$^{*}$ topology  \cite{Walters}.
For $\nu\in M(X)$ and $r>0$,   we denote a ball in $M(X)$ centered at $\nu$ with radius $r$ by
$$\mathcal{B}(\nu,  r):=\{\rho(\nu,  \mu)<r:\mu\in M(X)\}.  $$
One notices that
\begin{equation}\label{diameter-of-Borel-pro-meas}
	\rho(\xi,  \tau)\leq2~~\textrm{for any}~~\xi,  \tau\in M(X).
\end{equation}
It is also well known that the natural embedding $j:x\mapsto \delta_x$ is continuous.   Since $X$ is compact and $M(X)$ is Hausdorff,   one sees that there is a homeomorphism between $X$ and its image $j(X)$.   Therefore,   without loss of generality
we will assume that
\begin{equation}\label{metric-on-X}
	d(x,  y)=\rho(\delta_x,  \delta_y).
\end{equation}
For $x\in X$ and $\varepsilon>0$,   we denote a ball in $X$ centered at $x$ with radius $\varepsilon$ by
$$B(x,\varepsilon):=\{d(x,y)<\varepsilon:y\in X\}.$$
A straight calculation using \eqref{diameter-of-Borel-pro-meas} and \eqref{metric-on-X} gives
\begin{Lem}\label{lem:prohorov}
For any $\varepsilon > 0,\delta >0$, and any two sequences $\{x_i\}_{i=0}^{n-1},\{y_i\}_{i=0}^{n-1}$ of $X$, if $d(x_i,y_i)<\varepsilon$ holds for any $i\in [0,n-1]$, then for any $J\subseteq \{0,1,\cdots,n-1\}$ with $\frac{n-|J|}{n}<\delta$, one has:
	\begin{description}
		\item[(a)] $\rho(\frac{1}{n}\sum_{i=0}^{n-1}\delta_{x_i},\frac{1}{n}\sum_{i=0}^{n-1}\delta_{y_i})<\varepsilon.$
		\item[(b)] $\rho(\frac{1}{n}\sum_{i=0}^{n-1}\delta_{x_i},\frac{1}{|J|}\sum_{i\in J}\delta_{y_i})<\varepsilon+2\delta.$
	\end{description}
\end{Lem}
Lemma \ref{lem:prohorov} is easy to be verified, similar to \cite[Lemma 4.3]{Hou-Tian-Yuan-2023} and shows us that if any two orbit of $x$ and $y$ in finite steps are close in the most time, then the two empirical measures induced by $x,y$ are also close.

\subsection{Notions and notations}
Consider a metric space $(X,  d).  $ Let $A,  B$ be two nonempty subsets,   then the distance from $x\in X$ to $B$ is defined as
$\operatorname{dist}(x,  A):=\inf_{y\in A}d(x,  y).  $
Furthermore,   the distance from $A$ to $B$ is defined as
$\operatorname{dist}(A,  B):=\sup_{x\in A}\operatorname{dist}(x,  B).  $
Finally,   the Hausdorff distance between $A$ and $B$ is defined as
$$d_H(A,  B):=\max\{\operatorname{dist}(A,  B),  \operatorname{dist}(B,  A)\}.  $$

Now consider a topological dynamical system $(X,  f).$ A point $x\in X$ is called {\it recurrent},   if  $x\in \omega_f (x).  $ Otherwise,   $x$ is called {\it non-recurrent}. A point $x\in X$ is called {\it transitive} if $\overline{\operatorname{orb}(x,f)}=X$, the set of transitive points is denoted by $Tran(f)$. A point $x\in X$ is called {\it almost periodic},   if for every open neighborhood $U$ of $x$,   there exists $N\in\mathbb{N}$ such that for every $n\in\mathbb{N},$ $f^k (x)\in U$ for some $k\in [n,  n+N]$, the set of almost periodic points is denoted by $AP(f)$, then $AP(f^n)=AP(f)$ for any $n\in\mathbb{N}$ \cite[Theorem I]{Erdos-Stone1945}. If for every pair of nonempty open sets $U$ and $V$, there is an nonnegative integer $n$ such that $f^{-n}(U)\cap V\neq \emptyset$ then we call $(X,  f)$ \textit{topologically transitive}.
Furthermore,   if for every pair of nonempty open sets $U$ and $V$, there exists an nonnegative integer $N$ such that $f^{-n}(U)\cap V\neq \emptyset$ for every $n>N$,   then we call $(X,  f)$ \textit{topologically mixing}. We say that $(X,  f)$ is \emph{positively expansive} if there exists a constant $c>0$ such that for any $x,  y\in X$,   $d(f^i(x),  f^i(y))> c$ for some $i\in\mathbb{Z}^+$ and we call $c$ the expansive constant. When $f$ is a homeomorphism,  we say that $(X,  f)$ is \emph{expansive} if there exists a constant $c>0$ such that for any $x,  y\in X$,   $d(f^i(x),  f^i(y))> c$ for some $i\in\mathbb{Z}$ and we also call $c$ the expansive constant. We say that a subset $Y$ of $X$ is \emph{$f$-invariant} (or simply \emph{invariant}) if $f(Y)\subseteq Y.$ If $Y$ is a closed $f$-invariant subset of $X,$ then $(Y,f|_Y)$ also is a dynamical system. We will call it a subsystem of $(X,f).$ It is not hard to check that $Rec(f|_Y)=Rec(f)\cap Y$.   Consequently,   $NRec(f|_Y)=NRec(f)\cap Y$. 

A finite sequence $\mathfrak{C}=\langle x_1,  \cdots,  x_l\rangle,  l\in\mathbb{N}$ is called a \emph{chain}.   Furthermore,   if $d(f(x_i),  x_{i+1})<\varepsilon,  1\leq i\leq l-1$,   we call $\mathfrak{C}$ an $\varepsilon$-chain with length $l.$
For any $m\in\mathbb{N}$,   if there are $m$ $\varepsilon$-chains $\mathfrak{C}_i=\langle x_{i,  1},  \cdots,  x_{i,  l_i}\rangle$,   $l_i\in\mathbb{N},  1\leq i\leq m$ satisfying that $d(f(x_{i,  l_i}),x_{i+1,  1})<\varepsilon,   1\leq i\leq m-1$,   then we can concatenate $\mathfrak{C}_i$s to constitute a new $\varepsilon$-chain
$$\langle x_{1,  1},  \cdots,  x_{1,  l_1},  x_{2,  1},  \cdots,  x_{2,  l_2},  \cdots,  x_{m,  1},  \cdots,  x_{m,  l_m}\rangle,$$
which we denote by $\mathfrak{C}_1\mathfrak{C}_2\cdots\mathfrak{C}_m$.

\begin{Def}
	Let $A \subseteq X$ be a nonempty closed invariant set. We say a chain $\mathfrak{C}=\left\langle x_{1}, \cdots, x_{l}\right\rangle$ is in $A$ if $\left\{x_{i}\right\}_{i=1}^{l} \subseteq A$. We say an $\varepsilon$-chain $\mathfrak{C}=\left\langle x_{1}, \cdots, x_{l}\right\rangle$ connects $a$ and $b$ if $x_{1}=a$ and $d\left(f (x_{l}), b\right)<\varepsilon$.
	We call $A$ \emph{internally chain transitive} if for any $a, b \in A$ and any $\varepsilon>0$, there is an $\varepsilon$-chain $\mathfrak{C}$ in $A$ connecting $a$ and $b$.
\end{Def}

\begin{Lem}\label{Lem-omega-naturally-in-ICT} \cite[Lemma 2.1]{HSZ} For any $x\in X,$ $\omega_f(x)$ is internally chain transitive.
	
\end{Lem}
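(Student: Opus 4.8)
The plan is to verify Definition \ref{Def-internally-chaintransitive} directly for the set $\omega:=\omega_f(x)$, which we already know is nonempty, compact and $f$-invariant; so all that is needed is, given $a,b\in\omega$ and $\eps>0$, an $\eps$-chain \emph{inside} $\omega$ joining $a$ to $b$. Two standard facts carry the whole argument. First, since $X$ is compact, $f$ is uniformly continuous, so one may fix $\delta\in(0,\eps/2)$ with $d(u,v)<\delta\Rightarrow d(fu,fv)<\eps/2$. Second, the orbit of $x$ is attracted to $\omega$, i.e. $\dist(f^nx,\omega)\to 0$ as $n\to\infty$: otherwise some subsequence of $(f^nx)$ would stay a fixed distance from $\omega$ yet, by compactness of $X$, admit a convergent sub-subsequence whose limit lies in $\omega_f(x)=\omega$ by definition, a contradiction.

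With $a,b\in\omega$ and $\eps>0$ given, first choose $\delta$ as above and then $N_0$ with $\dist(f^nx,\omega)<\delta$ for all $n\ge N_0$. Since $a,b\in\omega_f(x)$, the orbit of $x$ visits any prescribed neighbourhoods of $a$ and of $b$ at arbitrarily late times, so we may pick integers $N_0\le n_1<n_2$ with $d(f^{n_1}x,a)<\delta$ and $d(f^{n_2}x,b)<\delta$. For each integer $n$ with $n_1<n<n_2$ choose (using $n\ge N_0$) a point $z_n\in\omega$ with $d(z_n,f^nx)<\delta$, and form the finite sequence $\mathfrak{C}_{ab}=\langle a,\,z_{n_1+1},\,z_{n_1+2},\,\dots,\,z_{n_2-1},\,b\rangle$, all of whose entries lie in $\omega$ by construction since $a,b,z_n\in\omega$.

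It then remains to check that consecutive $f$-images along $\mathfrak{C}_{ab}$ are $\eps$-close. Every consecutive pair is, for some $n_1\le n\le n_2-1$, of the form ``$f$ applied to a point that is $\delta$-close to $f^nx$'' against ``a point that is $\delta$-close to $f^{n+1}x$'' --- at the left end this uses $d(a,f^{n_1}x)<\delta$, at the right end $d(f^{n_2}x,b)<\delta$ --- so by the choice of $\delta$ and the triangle inequality each gap is at most $(\eps/2)+(\eps/2)=\eps$; the degenerate cases $a=b$ and $n_2=n_1+1$ are covered verbatim. Hence $\mathfrak{C}_{ab}$ is the desired $\eps$-chain in $\omega$, and $\omega_f(x)$ is internally chain transitive. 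I do not expect a genuine obstacle here: the only point deserving attention is precisely that the chain must lie \emph{in} $\omega$ and not merely near it, which is why one cannot use the orbit points $f^nx$ themselves and must substitute the true $\omega$-points $z_n$ supplied by the attraction property --- after that substitution only routine triangle-inequality bookkeeping remains. (This is the content of Lemma \ref{Lem-omega-naturally-in-ICT}, quoted from \cite{HSZ}; the sketch above is the standard proof.)
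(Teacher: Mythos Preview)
Your proof is correct and is exactly the standard argument for this fact. The paper itself does not supply a proof: Lemma~\ref{Lem-omega-naturally-in-ICT} is merely quoted from \cite{HSZ}, so there is no in-paper proof to compare against; your write-up (uniform continuity to get $\delta$, attraction of the orbit to $\omega_f(x)$, and replacing orbit points $f^nx$ by nearby true $\omega$-points $z_n$) is precisely the proof one finds in \cite{HSZ}.
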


\subsection{Topological entropy and metric entropy}\label{topological-entropy}
\subsubsection{Topological entropy of subsets}
For the topological entropy of a subset, Bowen developed a satisfying definition via dimension language \cite{Bowen1973} which we now illustrate. For $x,  y\in X$ and $n\in\mathbb{N}$,   the Bowen distance between $x,  y$ is defined as
$$d_n(x,  y):=\max\{d(f^i(x),  f^i(y)):i=0,  1,  \cdots,  n-1\}$$
and the Bowen ball centered at $x$ with radius $\varepsilon>0$ is defined as
$$B_n(x,  \varepsilon):=\{y\in X:d_n(x,  y)<\varepsilon\}.  $$

Let $E\subseteq X$,   and $\mathcal {G}_{n}(E,  \sigma)$ be the collection of all finite or countable covers of $E$ by sets of the form $B_{u}(x,  \sigma)$ with $u\geq n$.   We set
$$C(E;t,  n,  \sigma,  f):=\inf_{\mathcal {C}\in \mathcal {G}_{n}(E,  \sigma)}\sum_{B_{u}(x,  \sigma)\in \mathcal {C}}e^{-tu} \,\,\,\text{   and }
C(E;t,  \sigma,  f):=\lim_{n\rightarrow\infty}C(E;t,  n,  \sigma,  f).  $$
Then we define
$$h_{top}(E;\sigma,  f):=\inf\{t:C(E;t,  \sigma,  f)=0\}=\sup\{t:C(E;t,  \sigma,  f)=\infty\}$$
%\begin{Def}
The \textit{(Bowen) topological entropy} of $E$ is
\begin{equation}\label{definition-of-topological-entropy}
	h_{top}(f,  E):=\lim_{\sigma\rightarrow0} h_{top}(E;\sigma,  f).
\end{equation}
In particular, $h_{top}(f)=h_{top}(f,X)$. 
\begin{Lem}\label{Lemma-basic-property-Bowen-entropy-1}\cite[Proposition 2]{Bowen1973}
	Suppose that $(X,f)$ is a dynamical system, $Y, Y_1, Y_2,\cdots\subset X$, then we have
	\begin{enumerate}[(1)]
		\item $h_{top}(f,f(Y))=h_{top}(f,Y)$;
		\item $h_{top}(f,\bigcup_{i\geq1}Y_i)=\sup_{i\geq1}h_{top}(Y_i)$;
		\item $h_{top}(f^n,Y)=nh_{top}(f,Y)$ for any $n\geq1$.
	\end{enumerate}
\end{Lem}
\begin{Lem}\label{Lemma-basic-property-Bowen-entropy-2}\cite[Theorem 3.11]{Feng-Huang-2012}
	Suppose that $\pi:(X,f)\to(Y,g)$ is a factor between two dynamical systems. Then for any $E\subset X$, one has $$h_{top}(f,E)\geq h_{top}(g,\pi(E)).$$
\end{Lem}
\subsubsection{Metric entropy}
We call $(X,  \mathcal{B},  \mu)$ a probability space if $\mathcal{B}$ is a Borel $\sigma$-algebra on $X$ and $\mu$ is a probability measure on $X$.   For a finite measurable partition $\xi=\{A_1,  \cdots,  A_n\}$ of a probability space $(X,  \mathcal{B},  \mu)$,   define
$$H_\mu(\xi)=-\sum_{i=1}^n\mu(A_i)\log\mu(A_i).  $$
Let $f:X\to X$ be a continuous map preserving $\mu$.   We denote by $\bigvee_{i=0}^{n-1}f^{-i}\xi$ the partition whose element is the set $\bigcap_{i=0}^{n-1}f^{-i}A_{j_i},  1\leq j_i\leq n$.   Then the following limit exists:
$$h_\mu(f,  \xi)=\lim_{n\to\infty}\frac1n H_\mu\left(\bigvee_{i=0}^{n-1}f^{-i}\xi\right)$$
and we define the metric entropy of $\mu$ as
$$h_{\mu}(f):=\sup\{h_\mu(f,  \xi):\xi~\textrm{is a finite measurable partition of X}\}. $$
For convenience, we write $h_\mu$ to denote $h_\mu(f)$.
\begin{Lem}\label{Lemma-basic-property-Bowen-entropy-3}\cite[Theorem 1]{Bowen1973}
	Suppose that $(X,f)$ is a dynamical system, $\mu\in M(f,X)$ and $Y\subset X$ satisfies that $\mu(Y)=1$, then $h_{top}(f,Y)\geq h_\mu$.
\end{Lem}

\subsection{Shadowing property}\label{shadowing}

Bowen \cite{Bowen1975} proved that every Anosov diffeomorphism of a compact manifold has the shadowing property and he used this notion efficiently in the study of  $\omega$-limit sets. Since its introduction,   shadowing property has attracted various attentions. Meanwhile,   shadowing property is also generalized to various other forms.   For example,   there are studies on limit-shadowing \cite{Pilyugin2007},   s-limit-shadowing \cite{Mazur-Oprocha,  Sakai2012}, and more forms \cite{Kwietniak-Oprocha,DTY,BMR,Dastjerdi-Hosseini,  ODH,Fakhari-Gane}.

\begin{Def}\label{def-shadowing}
	Suppose $f:X\to X$ is a homeomorphism on compact metric space. For any $\delta>0$,   a sequence $\{x_n\}_{n\in \mathbb{Z}}$ is called a \textit{$\delta$-pseudo-orbit} if
	$d(f(x_n),  x_{n+1})<\delta~~\textrm{for}~~n\in\mathbb{Z}.$ $\{x_n\}_{n\in \mathbb{Z}}$ is \textit{$\varepsilon$-shadowed} by some $y\in X$ if
	$d(f^n(y),  x_n)<\varepsilon~~\textrm{for any}~~n\in\mathbb{Z}.$
	We say that $(X,  f)$ has the \textit{two-sided shadowing property} if for any $\varepsilon>0$,   there exists  $\delta>0$ such that any $\delta$-pseudo-orbit is $\varepsilon$-shadowed by some point in $X$.
\end{Def}

\begin{Def}\label{def-limitshadowig}
	Suppose $f:X\to X$ is a homeomorphism on compact metric space. A sequence $\{x_n\}_{n\in \mathbb{Z}}$ is called a \textit{limit-pseudo-orbit} if
	$\lim_{n\to\pm\infty} d(f(x_n),  x_{n+1})=0.$ $\{x_n\}_{n\in \mathbb{Z}}$ is \textit{limit-shadowed} by $y\in X$ if
	$\lim_{n\to\pm\infty}d(f^n(y),  x_n)=0.$
	If $\{x_n\}_{n\in \mathbb{Z}}$  is both a $\delta$-pseudo-orbit and a limit-pseudo-orbit, then $\{x_n\}_{n\in \mathbb{Z}}$ is called a \textit{$\delta$-limit-pseudo-orbit}. $\{x_n\}_{n\in \mathbb{Z}}$ is \textit{$\varepsilon$-limit-shadowed} by some $y\in X$ if
	$\{x_n\}_{n\in \mathbb{Z}}$ is both $\varepsilon$-shadowed and limit-shadowed by $y$.
	We say that $(X,  f)$ has the \textit{two-sided s-limit-shadowing property} if for any $\varepsilon>0$,   there exists  $\delta>0$ such that any $\delta$-pseudo-orbit $\{x_n\}_{n\in \mathbb{Z}}$ is $\varepsilon$-shadowed by some point $y$ in $X,$ and, if in addition, $\{x_n\}_{n\in \mathbb{Z}}$ is a $\delta$-limit-pseudo-orbit, then it is $\varepsilon$-limit-shadowed by $y.$
\end{Def}
When $f$ is just a continuous map of compact metric space $X$, we say that $(X,  f)$ has the shadowing property or s-limit-shadowing property if Definition \ref{def-shadowing} or Definition \ref{def-limitshadowig} holds for $\{x_n\}_{n\in \mathbb{Z}^+}.$ It can be checked that every dynamical system satisfying two-sided shadowing property has shadowing property and every dynamical system satisfying two-sided s-limit-shadowing property has s-limit-shadowing property.

\begin{Lem}\cite[Theorem 4.3]{Li-Oprocha-2018}\label{Lemma 6.5-old}
	Suppose that $(X,f)$ satisfies the shadowing property and $\mu\in M(f,X)$, if $S_\mu$ is internally chain transitive, then for any $0\leq c\leq h_\mu$, there exists a sequence of minimal subsystem $\{(Y_n,f)\}_{n\geq1}$ such that $\liminf_{n\to\infty}h_{top}(f,Y_n)\geq c$. 
\end{Lem}
Since the support of an ergodic measure is always internally chain transitive, by the variational principle and Lemma \ref{Lemma 6.5-old}, we have
\begin{Lem}\label{Lemma 6.5}
	Suppose that $(X,f)$ satisfies the shadowing property, then for any $\alpha<h_{top}(f)$, there exists a minimal subsystem $(Y,f)$ such that $h_{top}(f,Y)>\alpha$.
\end{Lem}

\begin{Lem}\label{lem-shadowing-full-support}\cite[Proposition 3.5]{Li-Oprocha-2018}
	Suppose that $(X,f)$ is topologically transitive and has the shadowing property, then the set of ergodic measures with full support is residual in $M(f,X)$.
\end{Lem}

\begin{Lem}\label{lem-shadowing-s-limit-shadowing} \cite[Theorem]{LSaka2005}
	If $(X,  f)$ is topologically Anosov, then $(X,  f)$ has the two-sided s-limit shadowing property.
\end{Lem}
\begin{Lem}\label{lem-shadowing-s-limit-shadowing-2} \cite[Theorem 1]{Sakai}
	If $(X,  f)$ is topologically expanding, then $(X,  f)$ has the s-limit shadowing property.
\end{Lem}

\subsection{Specification-like properties}\label{almost-spec}
\begin{Def}\label{definition of specification}
	We say a dynamical system $(X,f)$ has \textit{periodic specification property} if for
	any $\varepsilon > 0$ there is a positive integer $M_\varepsilon$ such that for any points $x_{1}, \dots, x_{k},$ any positive integers $n_{1}, \dots, n_{k} \geq 1$ and any positive integers $p_{1}, \dots, p_{k} \geq M_\varepsilon$, there is a point $x$ in $X$ such that $d(f^{j}(x),f^{j}(x_{1}))\leq \varepsilon$ for every $0\leq j \leq n_{1}-1$ and $$d(f^{j+n_{1}+p_{1}+\dots +n_{i-1}+p_{i-1}}(x),f^{j}(x_{i}))\leq \varepsilon$$ for all $0\leq j \leq n_{i}-1,$ $2 \leq i \leq k,$ and $f^{n_{1}+p_{1}+\dots +n_{k}+p_{k}}(x)=x.$
\end{Def}

Pfister and Sullivan generalized the specification property to the $g$-almost product property in the study of large deviation \cite{PS2005}.   Later on,   Thompson  renamed a weaker version of it as the almost specification property in the study of irregular points \cite{Thompson2012}.   The only difference is that the blowup function $g$ can depend on $\varepsilon$ in the latter case.

\begin{Def}
	Let $\varepsilon_0>0$.   A function $g:\mathbb{N}\times (0,  \varepsilon_0)\to \mathbb{N}$ is called a \emph{mistake function} if for all $\varepsilon\in (0,  \varepsilon_0)$ and all $n\in \mathbb{N}$,   $g(n,  \varepsilon)\leq g(n+1,  \varepsilon)$ and
	$\lim_{n\to\infty}\frac{g(n,  \varepsilon)}{n}=0.$
\end{Def}
If $\varepsilon\geq \varepsilon_0$,   we define $g(n,  \varepsilon)=g(n,  \varepsilon_0)$.
For $n\in \mathbb{N}$ large enough such that $g(n,  \varepsilon)<n$,   let $\Lambda_n=\{0,  1,  \cdots,  n-1\}$.   Define
the $(g;n,  \varepsilon)$-Bowen ball centered at $x$ as the closed set
$$B_n(g;x,  \varepsilon):=\{y\in X:\exists~\Lambda\subseteq \Lambda_n,  ~|\Lambda_n\setminus\Lambda|\leq g(n,  \varepsilon)~\text{and}~\max\{d(f^j(x),  f^j(y)):j\in\Lambda\}\leq\varepsilon\}.  $$
\begin{Def}
	The dynamical system $(X,  f)$ has the \emph{almost specification} property with mistake function $g$,   if there exists a function $k_g:(0,  +\infty)\to \mathbb{N}$ such that for any
	$\varepsilon_{1}>0,  \cdots,  \varepsilon_{m}>0$,   any points $x_{1},  \cdots,  x_{m}\in X$,   and any integers $n_{1}\geq k_g(\varepsilon_{1}),  \cdots,  n_{m}\geq k_g(\varepsilon_{m})$,   we can find a point $z\in X$ such that
	\begin{equation*}
		f^{l_{j}}(z)\in B_{n_{j}}(g;x_{j},  \varepsilon_{j}),  ~j=1,  \cdots,  m,
	\end{equation*}
	where $n_{0}=0~\textrm{and}~l_{j}=\sum_{s=0}^{j-1}n_{s}$.
\end{Def}

Finally, we introduce two other specification-like properties.
\begin{Def}
	We say that $(X,f)$ has the \textit{periodic gluing orbit property} if for any $\varepsilon >0$ there exists a positive integer $K_\varepsilon$ such that for any points $x_{1}, \dots, x_{k}$ and positive integers $n_{1}, \dots, n_{k} \geq 1$ there are $p_{1}, \dots, p_{k} \leq K_\varepsilon$ and $x\in X$ so that $d(f^{j}(x),f^{j}(x_{1}))\leq \varepsilon$ for every $0\leq j \leq n_{1}-1$ and $$d(f^{j+n_{1}+p_{1}+\dots +n_{i-1}+p_{i-1}}(x),f^{j}(x_{i}))\leq \varepsilon$$ for all $0\leq j \leq n_{i}-1$ and $2 \leq i \leq k,$ and $f^{n_{1}+p_{1}+\dots +n_{k}+p_{k}}(x)=x.$
\end{Def}
\begin{Def}
	We say that $(X,f)$ has the \emph{approximate product property}, if for any $\varepsilon>0$, $\delta_1>0$ and $\delta_2>0$, there exists $N=N(\varepsilon,\delta_1,\delta_2)\in\mathbb{N}$ such that for any $n\geq N$ and any sequence $\{x_i\}_{i=1}^{+\infty}$ of $X$, there exist a sequence of integers $\{h_i\}_{i=1}^{+\infty}$ and $x\in
	X$ satisfying $h_1=0$, $n\leq h_{i+1}-h_i\leq n(1+\delta_2)$ and $$|\{0\leq j\leq n-1\mid d(f^{h_i+j}x,f^jx_i)>\varepsilon\}|\leq\delta_1n.$$
\end{Def}

\subsection{Uniform separation property}
For $\delta>0$,   $\varepsilon>0$ and $n\in\mathbb{N}$,   two points $x$ and $y$ are $(\delta,  n,  \varepsilon)$-separated if
$$|\{j:d(f^{j}(x),  f^{j}(y))>\varepsilon,  ~0\leq j\leq n-1\}|\geq\delta n.  $$
A subset $E$ is $(\delta,  n,  \varepsilon)$-separated if any pair of different points of $E$ are $(\delta,  n,  \varepsilon)$-separated. 
\begin{Lem}\cite{PS2005}\label{separated-for-nu}
	Let $\mu$ a $f$-ergodic measure and $\eta<h_\mu$.   Then there exist $\delta^*>0$ and $\varepsilon^*>0$ so that for each neighborhood $F$ of $\mu$ in $M(X)$,   there exists $n_F^*\in \mathbb{N}$ such that for any $n\geq n_F^*$, there is a $(\delta^*,  n,  \varepsilon^*)$-separated set $\Gamma_n\subseteq X_{n,  F}$ with
	$$|\Gamma_n|\geq e^{n\eta},$$
	where $X_{n,  F}:=\{x\in X:\mathcal{E}_{n}(x)\in F\}.$
\end{Lem}
Furthermore,   if the above $\delta^*$ and $\varepsilon^*$ can be chosen to be independent of $\mu$,   one has the following definition.
\begin{Def}\label{def-uniformseparation}
	We say $(X,  f)$ has the uniform separation property if the following holds.   For any $\eta>0$,   there exist $\delta^*>0$ and $\varepsilon^*>0$ so that for any $f$-ergodic measure $\mu$ and any neighbourhood $F\subseteq M(X)$ of $\mu$,   there exists $n^*_{F,  \mu,  \eta}\in\mathbb{N}$ such that for any $n\geq n^*_{F,  \mu,  \eta}$,   there is a $(\delta^*,  n,  \varepsilon^*)$-separated set $\Gamma_n\subseteq X_{n,  F}\cap S_\mu$ with
	$$|\Gamma_n|\geq e^{n(h_{\mu}-\eta)}. $$
\end{Def}
\begin{Rem}
	The key observation of Definition \ref{def-uniformseparation} is that the selection of $\delta^*,  \varepsilon^*$ does not depend on $\mu$ and $F$.   This is exactly what `uniform' means. The original definition in  \cite[Definition 3.1]{PS2007} require that ``$|\Gamma_n|\geq 2^{n(h_{\mu}-\eta)}$'' since ``$\log_2$'' rather than ``$\log$'' is used in the definition of metric entropy. Also, the original definition only require that ``$\Gamma_n\subseteq X_{n,  F}$''. From the remark after \cite[Definition 3.1]{PS2007}, we know that uniform separation property implies that $h_{top}(f)$ is finite.
\end{Rem}
Uniform separation property is satisfied by some typical dynamical systems,   the following results still holds when we use Definition \ref{def-uniformseparation} as the definition of uniform separation property.
\begin{Prop}\cite[Theorem 3.1]{PS2007}\label{Prop-PS2007}
	If $(X,  f)$ is positively expansive (resp. expansive),   $h$-expansive or asymptotic $h$-expansive,   then $(X,  f)$ has the uniform separation property.
\end{Prop}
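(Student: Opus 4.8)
The plan is to reduce the three hypotheses to the weakest one, \emph{asymptotic $h$-expansiveness}: a positively expansive map (or an expansive homeomorphism) is $h$-expansive by Bowen, and an $h$-expansive map is asymptotically $h$-expansive by Misiurewicz, so it suffices to prove that asymptotic $h$-expansiveness yields the uniform separation property of Definition~\ref{def-uniformseparation}. The non-uniform statement is already recorded as Lemma~\ref{separated-for-nu}, so the real content is to show that the constants $\delta^{*},\eps^{*}$ can be pinned down as functions of the tolerance $\eta$ alone, before one ever looks at the ergodic measure $\mu$ or the neighbourhood $F$. The engine is Katok's entropy formula (Lemma~\ref{lem-Katok-entropy-formula}) together with the control on ``entropy at small scales'' (tail entropy) supplied by asymptotic $h$-expansiveness; for the strictly $h$-expansive (in particular expansive) case this control is perfect and the argument simplifies considerably.

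I would proceed as follows. \emph{Step 1.} Fix $\eta>0$. Writing $\Phi_{\eps}(x)=\{y:\ d(f^{n}x,f^{n}y)\le\eps\text{ for all }n\}$ (with $n\in\Z^{+}$ in the positively expansive case and $n\in\Z$ for expansive homeomorphisms) and $h^{*}_{f}(\eps)=\sup_{x\in X}\htop(f,\Phi_{\eps}(x))$, asymptotic $h$-expansiveness means $h^{*}_{f}(\eps)\to0$ as $\eps\to0$; choose once and for all a scale $\eps^{*}=\eps^{*}(\eta)>0$ with $h^{*}_{f}(\eps^{*})<\eta/8$. \emph{Step 2.} For an \emph{arbitrary} ergodic $\mu$, combine Katok's formula with the entropy-expansiveness inequality $h_{\mu}(f)\le(\text{entropy at scale }\eps^{*})+h^{*}_{f}(\eps^{*})$ to obtain, at this fixed $\mu$-independent scale $\eps^{*}$, a set of $\mu$-measure at least $\gamma$ whose minimal $(n,\eps^{*})$-spanning number has lower exponential growth at least $h_{\mu}(f)-\eta/8$. \emph{Step 3.} Convert spanning into separation: extract an $(n,\eps^{*})$-separated set $E_{n}$ with $|E_{n}|\ge 2^{n(h_{\mu}(f)-\eta/4)}$ inside that measure-$\ge\gamma$ set, and discard the exponentially negligible points whose empirical measure escapes $F$ --- this costs nothing in the exponent by the Birkhoff theorem, exactly as in the proof of Lemma~\ref{useful-Proposition}, so we may assume $E_{n}\subseteq X_{n,F}$. \emph{Step 4.} Upgrade ``differing at one time'' to ``differing at a definite proportion of times'': run a greedy thinning on $E_{n}$, each time keeping a point $x$ and discarding every $y\in E_{n}$ that is \emph{not} $(\delta^{*},n,\eps^{*})$-separated from $x$. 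Every discarded point lies in the ``Bowen ball with $\lfloor\delta^{*}n\rfloor$ mistakes''
\[
\Big\{y:\ d(f^{j}x,f^{j}y)\le\eps^{*}\ \text{for more than}\ (1-\delta^{*})n\ \text{indices}\ j\in[0,n)\Big\},
\]
and a sphere-packing count bounds the number of pairwise $(n,\eps^{*})$-separated points in this set, for $n$ large, by
\[
\binom{n}{\lfloor\delta^{*}n\rfloor}\,\cov(X,\eps^{*})^{\lfloor\delta^{*}n\rfloor}\,2^{\,n\,h^{*}_{f}(\eps^{*})}\ \le\ 2^{\,n\left(H(\delta^{*})+\delta^{*}\log\cov(X,\eps^{*})+\eta/4\right)},
\]
where $H$ is the binary entropy function. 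Since $\eps^{*}$ was frozen in Step~1, $\cov(X,\eps^{*})$ is a genuine constant, so we may now choose $\delta^{*}=\delta^{*}(\eta)>0$ so small that $H(\delta^{*})+\delta^{*}\log\cov(X,\eps^{*})<\eta/4$; dividing $|E_{n}|$ by this per-step loss produces, for all $n\ge n^{*}_{F,\mu,\eta}$, a $(\delta^{*},n,\eps^{*})$-separated set $\Gamma_{n}\subseteq X_{n,F}$ with $|\Gamma_{n}|\ge 2^{n(h_{\mu}(f)-\eta)}$, which is exactly Definition~\ref{def-uniformseparation}. In the strictly $h$-expansive case one takes $\eps^{*}$ below the $h$-expansiveness constant, so that $h^{*}_{f}(\eps^{*})=0$ and the error terms $\eta/8$ in Steps~2 and~4 disappear.

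The delicate point is the quantifier order in Step~4, and making the two independent sources of loss both small: the tail-entropy term $h^{*}_{f}(\eps^{*})$ forces $\eps^{*}$ to be chosen from $\eta$ alone (this is precisely where asymptotic $h$-expansiveness is used and where uniformity in $\mu$ comes from), while the purely combinatorial overhead $H(\delta^{*})+\delta^{*}\log\cov(X,\eps^{*})$ coming from the $\lfloor\delta^{*}n\rfloor$ free coordinates can only be made small \emph{after} $\eps^{*}$ is frozen --- yet still in terms of $\eta$ only. Arranging the chain $\eta\rightsquigarrow\eps^{*}\rightsquigarrow\delta^{*}\rightsquigarrow(\mu,F)\rightsquigarrow n^{*}_{F,\mu,\eta}$ so that all errors sum to less than $\eta$ is the whole subtlety; once it is in place, Steps~1--3 are Katok's formula plus the standard spanning-to-separated and generic-restriction manipulations that already appear in Lemmas~\ref{useful-Proposition} and~\ref{separated-for-nu}.
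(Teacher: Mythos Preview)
The paper does not prove this proposition at all --- it is simply cited from Pfister--Sullivan \cite{PS2007} --- so there is no in-paper argument to compare against. Your sketch follows the strategy of the original Pfister--Sullivan proof and is correct in outline: freeze $\eps^{*}$ from $\eta$ via the tail entropy, get many $(n,\eps^{*})$-separated points in $X_{n,F}$ by Katok, then thin to $(\delta^{*},n,\eps^{*})$-separated with a combinatorial loss that depends only on $\eps^{*}$ and $\delta^{*}$.

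The one place that needs tightening is Step~4. After fixing the bad-index set and an $\eps^{*}$-covering-ball on each bad index, any two surviving points $y,y'$ satisfy $d(f^{j}y,f^{j}y')\le 2\eps^{*}$ for \emph{every} $j<n$; but the centres of those balls do not form an $f$-orbit, so the resulting ``tube'' is not a genuine Bowen ball, and the tail-entropy quantity $h^{*}_{f}(\eps^{*})=\sup_{x}\htop(f,\Phi_{\eps^{*}}(x))$ does not directly bound the number of $(n,\eps^{*})$-separated points inside it. Pfister--Sullivan avoid this by running the count through a finite Borel partition $\xi$ of diameter at most $\eps^{*}$ with $\mu(\partial\xi)=0$: cylinder sets of $\xi_{0}^{n-1}$ are disjoint, so the combinatorics are exact, and the multiplicity is controlled by the conditional entropy $h_{\mu}(f\mid\bigvee_{i\ge0}f^{-i}\xi)\le h^{*}_{f}(\operatorname{diam}\xi)$, which is the uniform-in-$\mu$ bound you need. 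Your Bowen-ball version can be salvaged by introducing a second scale (e.g.\ run Katok at $\eps^{*}/3$ and form the mistake-ball at $\eps^{*}$, so the residual ambiguity sits in an honest Bowen ball), but as written the factor $2^{\,n\,h^{*}_{f}(\eps^{*})}$ is asserted rather than justified.
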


For $\varepsilon>0$ and $n\in\mathbb{N}$,   two points $x$ and $y$ are $(n,  \varepsilon)$-separated if
$d_n(x,y)>\varepsilon.$
A subset $E$ is $(n,  \varepsilon)$-separated if any pair of different points of $E$ are $(n,  \varepsilon)$-separated.
\begin{Lem}\cite{DOT}\label{useful-Proposition}
	Let $\mu$ be an ergodic measure and $\eta<h_\mu$.   Then there exists $\varepsilon>0$ such that for each neighbourhood $F$ of $\mu$ in $M(X)$,   there exists $n_F\in\mathbb{N}$ such that for any $n\geq n_F$,   there exists an $(n,  \varepsilon)$-separated set {$\Gamma_n\subset X_{n,  F}\cap S_\mu$}with
	$$|\Gamma_n|\geq e^{n\eta}.  $$
\end{Lem}

\subsection{Entropy-dense properties}
In this subsection,   we introduce several  entropy-dense properties,   which shall serve for our needs in the future.
Eizenberg,   Kifer and Weiss  \cite{EKW} proved for systems with the specification property that any $f$-invariant probability measure $\nu$ is the weak limit of a sequence of ergodic measures
$\{\nu_n\}$,   such that the entropy of $\nu$ is also the limit of the entropies of the $\{\nu_n\}$.   This is a central point in
large deviations theory,   which was first emphasized in \cite{FO}.   Meanwhile,   this also plays a crucial part in the computing of Billingsley dimension \cite{Billingsley1960,  Billingsley1961} on shift spaces \cite{PS2003}.
Pfister and Sullivan refer to this property as the \emph{entropy-dense} property \cite{PS2005}.

\begin{Def}
	We say $\Lambda$ satisfies the  entropy-dense property (or $M_{erg}(f,  \Lambda)$ is  entropy-dense in $M(f,  \Lambda)$),  if for any $\mu\in M(f,  \Lambda)$,   for any neighborhood $G$ of $\mu$ in $M(\Lambda)$,   and for any $\eta<h_{\mu}(f)$,   there exists a $\nu\in M_{erg}(f,  \Lambda)$ such that $h_{\nu}(f)>\eta$ and $\nu \in G$.
\end{Def}

\begin{Def}
	We say $\Lambda$ satisfies the refined entropy-dense property (or $M_{erg}(f,  \Lambda)$ is refined entropy-dense in $M(f,  \Lambda)$),  if for any $\mu\in M(f,  \Lambda)$,   for any neighborhood $G$ of $\mu$ in $M(\Lambda)$,
	and for any $\eta<h_{\mu}(f)$,   there exists a closed $f$-invariant set $\Lambda_{\mu}\subseteq \Lambda $ such that $M(f,  \Lambda_{\mu})\subseteq G$ and $h_{top}(f,  \Lambda_{\mu})>h_{\mu}(f)-\eta$. By classical variational principle,
	it is equivalent that for any neighborhood $G$ of $\mu$ in $M(\Lambda)$,   and for any $\eta>0$,   there exists a $\nu\in M_{erg}(f,  \Lambda)$ such that $h_{\nu}(f)>h_{\mu}(f)-\eta$ and $M(f,  S_{\nu})\subseteq G$.
\end{Def}

Of course,   $\textrm{refined entropy-dense} \Rightarrow\textrm{entropy-dense}\Rightarrow $ ergodic measures are dense in the space of invariant measures.   For systems with the approximate product property,  Pfister and Sullivan in fact had obtained the refined entropy-dense properties by showing the following lemma.

%\end{Rem}
\begin{Lem}\cite[Proposition 2.3]{PS2005}\label{Lemma-PS}
	Suppose that $(X,  f)$ has the approximate product property and that $\nu\in M(X,f)$
	verifies the conclusions of Lemma \ref{separated-for-nu}.   Let $h<h_{\nu}(f)$.
	Then,   there exists $\varepsilon>0$ such that for any neighborhood $G$ of $\nu$,   there exists a closed $f$-invariant subspace $Y\subseteq X$ and an $n_G\in\mathbb{N}$ with the following properties:
	\begin{enumerate}
		\item $\mathcal{E}_n(y)\in G$ whenever $n\geq n_G$ and $y\in Y$.
		\item For all $l\in\mathbb{N}$ there exists a $(l\cdot n_G,  \varepsilon)$-separated subset of $Y$ with cardinality greater than $\exp(l\cdot n_G\cdot h)$.
	\end{enumerate}
	In particular,   $h_{top}(f,  Y)\geq h$.
\end{Lem}

Note that topologically transitivity plus shadowing property implies approximate product property \cite[Theorem 30]{Kwietniak-Lacka-Oprocha-2016}. And from the definitions, we know that periodic gluing orbit property also implies approximate product property. Then by Lemma \ref{Lemma-PS}, we have 
\begin{Cor}\label{prop-entropy-dense-for-shadowing}
	Suppose that $(X,  f)$  is one of the following dynamical system:
	\begin{enumerate}[(1)]
		\item topologically transitive dynamical systems satisfying the shadowing property;
		\item dynamical systems with periodic gluing orbit property.
	\end{enumerate}
	Then $(X,  f)$ has the refined entropy-dense property.
\end{Cor}

\section{Topological entropy of nonrecurrent points: proof of Theorem \ref{Theorem A}}\label{section 3}
In this section, we prove Theorem \ref{Theorem A} by using Theorem \ref{Theorem B}. Let $(\Sigma_{m}^+,  \sigma)$ (resp. $(\Sigma_m,\sigma)$) denote the one-sided (resp. two-sided) full shift over a finite alphabet $\mathcal{A}$ with $|\mathcal{A}|=m$.
\begin{Lem}\label{horseshoe}\cite[Lemma 3.3]{DOT}
	Suppose $(X,  f)$ is a topological dynamical system. If $(X,  f)$ has the shadowing property and $h_{top}(f)>0,$ then for any $0<\alpha<h_{top}(f)$ there are $m,  k\in \mathbb{N}$,   $\frac{\log m}{k}>\alpha$ and a closed set $\Delta\subseteq X$
	invariant under $f^k$ such that there is a factor map $\pi: (\Delta,   f^k)\to (\Sigma_{m}^+,  \sigma)$.  
\end{Lem}
For dynamical systems satisfying specification property, we can obtain a similar result.

\begin{Lem}\label{horseshoe2}
	Suppose $(X,  f)$ satisfies the almost specification property with $h_{top}(f)>0$.   Then for any $0<\alpha<h_{top}(f)$,   there are $m,  k\in \mathbb{N}$,   $\frac{\log m}{k}>\alpha$ and a closed set $\Lambda\subseteq X$
	invariant under $f^k$ such that there is a factor map $\pi: (\Lambda,   f^k)\to (\Sigma_{m}^+,  \sigma)$.
\end{Lem}
\begin{proof}
	By the variational principle,   choose some invariant measure $\nu$ such that $0<\alpha<h_{\nu}\leq h_{top}(f)$.   According to Lemma \ref{separated-for-nu},   there exist $\delta>0$ and $\varepsilon>0$ so that for each neighborhood $F$ of $\nu$ in $M(X)$,   there exists $n_F\in \mathbb{N}$ such that for any $n\geq n_F$,   there exists $\Gamma_n\subseteq X_{n,  F}$ which is $(\delta,  n,  \varepsilon)$-separated and satisfies $\log|\Gamma_n|> n\alpha$.   Now choose an arbitrary neighbourhood $F$ of $\nu$.   Since $\lim_{n\to\infty}\frac{g(n,  \varepsilon/3)}{n}=0$,   there exists an $N\in \mathbb{N}$ such that for any $n\geq N$,   $g(n,  \frac{\varepsilon}{3})\leq \frac{\delta}{3}$.   Let $k=\max\{N, n_F,  k_g(\frac{\varepsilon}{3}),  [\frac{3}{\delta}]+1\}$.   Enumerate the elements of $\Gamma_k$ as $\{p_1,  \cdots,  p_m\}$ where $m=|\Gamma_k|$.
	
	Given $l\in\mathbb{N}$, let $\Sigma^+_m$ (resp. $\Sigma^+_{m,l}$) be the set whose element is $(a_0a_1\cdots)$ (resp. $(a_0a_1\cdots a_{l-1})$) such that $a_i\in\{p_1,  \cdots,  p_m\}$,   $i\in \mathbb{Z}^+$ (resp. $0\leq i\leq l-1$).   For every $\xi\in \Sigma^+_m$ and $l\in\mathbb{N}$,   denote $$Y_{\xi,l}=\{z\in X: f^{ik}(z)\in B_k(g;\xi_i,  \frac{\varepsilon}{3})~\text{for}~0\leq i\leq l-1\}$$ and 
	$$Y_{\xi}=\left\{z\in X: f^{ik}(z)\in B_k(g;\xi_i,  \frac{\varepsilon}{3})~\text{for}~i\in \mathbb{Z}^+\right\}.  $$ Then both $Y_{\xi,l}$ and $Y_{\xi}$ are closed. 
	By the almost specification property,   the selection of $k$ and the compactness of $X$,   $Y_{\xi}$ is nonempty.   Note that if $\xi\neq \psi$ then there is $t\in\mathbb{Z}^+$ such that $\xi_t \neq \psi_t$.   For any $x\in Y_{\xi}$ and $y\in Y_{\psi}$,
	since $\delta-2g(k,  \frac{\varepsilon}{3})\geq\frac{\delta}{3}$,   $f^{tk}x$ and $f^{tk}y$ are $(\frac{\delta}{3},  k,  \frac{\varepsilon}{3})$-separated.   Moreover,   $\frac{\delta}{3}\cdot k\geq 1$.   So $x,  y$ are $((t+1)k,  \frac{\varepsilon}{3})$-separated which implies that $x\neq y$.   Therefore,   $Y_{\xi}\cap Y_{\psi}=\emptyset$.   So we define $\Lambda$ as the disjoint union of $Y_{\xi}$:
	$$\Lambda=\bigsqcup_{\xi\in\Sigma_m^+}Y_{\xi}=\bigcap_{l=1}^\infty\bigcup_{(\xi_0\cdots\xi_{l-1})\in\Sigma^+_{m,l}}Y_{\xi,l}.  $$ Then $\Lambda$ is closed.
	Note that $f^k(Y_{\xi})\subseteq Y_{\sigma(\xi)}$.   So $\Lambda$ is $f^k$-invariant.   It is not hard to see that if $x\in Y_{\xi}$, $y\in Y_{\psi}$ and $d(f^l(x),  f^l(y))<\varepsilon/3$
	for $l=0,1,\cdots,   ks-1$ then $\xi_i=\psi_i$ for $i=0,  \ldots,  s-1$.
	Therefore,   if we define $\pi: \Lambda \to \Sigma_{m}^+$ as
	$$\pi(x):=\xi~~\textrm{if}~~x\in Y_{\xi},  $$
	then $\pi$
	is a continuous surjection.   It is  clearly that $\sigma \circ \pi =\pi \circ f^k$.
	
	Finally,    observe that
	$
	\frac{\log m}{k}=\frac{\log|\Gamma_k|}{k}>\alpha.
	$
	The proof is completed.
\end{proof}
Now, we give the proof of Theorem \ref{Theorem A} by using Theorem \ref{Theorem B}.
\subsection{Proof of Theorem \ref{Theorem A}} Suppose that $(X, f)$ satisfies the shadowing property or the almost specification property. When $h_{top}(f)=0$,   there is nothing to prove.   So we suppose $h_{top}(f)>0$.   By Lemma \ref{horseshoe} or Lemma \ref{horseshoe2},   for any $0<\alpha<h_{top}(f)$,   there are $m,  k\in \mathbb{N}$,   $\log (m)/k>\alpha$ and a closed and $f^k$-invariant set $\Lambda\subseteq X$ with a semiconjugation $\pi:(\Lambda,  f^k)\to(\Sigma_m^+,  \sigma)$. 
Since $(\Sigma_m^+,  \sigma)$ is topologically expanding and transitive,   by Theorem \ref{Theorem B},   we obtain that $h_{top}(\sigma,NRec(\sigma))=h_{top}(\sigma,  \Sigma_m^+)>k\alpha$.

On the other hand,  $\pi(Rec(f^k,  \Lambda))\subseteq Rec(\sigma)$,   so $NRec(f^k,  \Lambda)\supseteq \pi^{-1} NRec(\sigma)$.   Meanwhile,   note that $NRec(f^k)=NRec(f)$.   Therefore,   since $\pi$ is a semiconjugation, by Lemma \ref{Lemma-basic-property-Bowen-entropy-1} and Lemma \ref{Lemma-basic-property-Bowen-entropy-2}, one has
$$h_{top}(f,  NRec(f))=\frac1kh_{top}(f^k,  NRec(f))=\frac1kh_{top}(f^k,  NRec(f^k))\geq\frac1kh_{top}(\sigma,  NRec(\sigma))>\alpha.  $$
By the arbitrariness of $\alpha$,   we see that
$h_{top}(f,NRec(f))=h_{top}(f).  $  \qed

\section{Basic characterization of $\xi$-$\omega$-limit sets: proof of Theorem \ref{thm-density-basic-property}}\label{section 4}
In this section, we give the basic characterization of $\xi$-$\omega$-limit sets, which will help us find the points satisfying different statistical behaviours. 
To prove Theorem \ref{thm-density-basic-property}, we need some preparations. For any $x\in X$,   we define the measure center of $x$ as
$$C^*_x:=\overline{\bigcup_{\mu\in M(f,  \omega_f(x))}S_{\mu}}.  $$
Furthermore,   we define the measure center of an invariant set $\Lambda\subseteq X$ as
$$C^*_\Lambda:=\overline{\bigcup_{\mu\in M(f,  \Lambda)}S_{\mu}}.$$
\begin{Lem}\label{lem-cap-cup}
	Let $\Lambda \subseteq X$ be compact and $f$-invariant. We have the following relations:
	\begin{description}
		\item[(1)] $\bigcap_{\mu\in M_{erg}(f,  \Lambda)}S_{\mu}=\bigcap_{\mu\in M(f,  \Lambda)}S_{\mu}$.
		\item[(2)] $C^*_\Lambda=\overline{\bigcup_{\mu\in M_{erg}(f,  \Lambda)}S_{\mu}}$.
	\end{description}
\end{Lem}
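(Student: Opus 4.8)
The plan is to derive both identities from the ergodic decomposition theorem together with the elementary characterization of $S_\mu$ as the smallest closed set of full $\mu$-measure. First I would record the trivial halves: since $M_{erg}(f,\Lambda)\subseteq M(f,\Lambda)$, intersecting over a larger family can only shrink the result, so $\bigcap_{\mu\in M(f,\Lambda)}S_\mu\subseteq \bigcap_{\mu\in M_{erg}(f,\Lambda)}S_\mu$, and likewise $\overline{\bigcup_{\mu\in M_{erg}(f,\Lambda)}S_\mu}\subseteq \overline{\bigcup_{\mu\in M(f,\Lambda)}S_\mu}=C^*_\Lambda$. It therefore suffices to prove the reverse inclusions. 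For this I would first note the following restricted form of ergodic decomposition: for every $\mu\in M(f,\Lambda)$ there is a Borel probability measure $\tau$ on $M_{erg}(f,X)$ with $\mu=\int\nu\,d\tau(\nu)$, and since $\mu(\Lambda)=1$ forces $\nu(\Lambda)=1$ for $\tau$-a.e. $\nu$, we may assume $\tau$ is supported on $M_{erg}(f,\Lambda)$.

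For (1), take any $x\in\bigcap_{\nu\in M_{erg}(f,\Lambda)}S_\nu$ and any $\mu\in M(f,\Lambda)$; the goal is $x\in S_\mu$. Let $U$ be an arbitrary open neighborhood of $x$. Because $x\in S_\nu$ we have $\nu(U)>0$ for every $\nu\in M_{erg}(f,\Lambda)$, hence for $\tau$-a.e. $\nu$; integrating, $\mu(U)=\int\nu(U)\,d\tau(\nu)>0$. As $U$ was arbitrary, $x\in S_\mu$, which gives the missing inclusion $\bigcap_{\nu\in M_{erg}(f,\Lambda)}S_\nu\subseteq\bigcap_{\mu\in M(f,\Lambda)}S_\mu$ and proves (1). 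For (2), set $C:=\overline{\bigcup_{\nu\in M_{erg}(f,\Lambda)}S_\nu}$, a closed subset of $\Lambda$. Every $\nu\in M_{erg}(f,\Lambda)$ satisfies $S_\nu\subseteq C$, so $\nu(C)=1$; hence for any $\mu\in M(f,\Lambda)$, writing $\mu=\int\nu\,d\tau(\nu)$ as above, $\mu(C)=\int\nu(C)\,d\tau(\nu)=1$. Since $S_\mu$ is the smallest closed set of full $\mu$-measure, $S_\mu\subseteq C$. Taking the union over $\mu\in M(f,\Lambda)$ and then the closure yields $C^*_\Lambda\subseteq C$, which is the remaining inclusion.

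I do not anticipate a genuine obstacle; the only points needing a line of care are the Borel measurability of $\nu\mapsto\nu(U)$ and $\nu\mapsto\nu(C)$ (standard, as $U$ is open and $C$ is closed) and the implicit assumption $\Lambda\neq\emptyset$, so that $M_{erg}(f,\Lambda)\neq\emptyset$ (guaranteed by Krylov--Bogolyubov on the compact invariant set $\Lambda$ followed by ergodic decomposition). If one prefers to avoid ergodic decomposition, an alternative for (2) is to invoke that $M(f,\Lambda)$ is the closed convex hull of its extreme points $M_{erg}(f,\Lambda)$ and approximate a general $\mu$ weak$^*$ by convex combinations of ergodic measures, but the decomposition argument is shorter and treats (1) and (2) uniformly. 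Note also that (1) is exactly the equality already used in item (3) of Theorem \ref{thm-density-basic-property}, so this lemma supplies its justification.
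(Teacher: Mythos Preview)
Your proof is correct and follows essentially the same route as the paper: both parts are derived from the ergodic decomposition theorem, with (1) done by integrating $\nu(U)>0$ over the ergodic decomposition and (2) by showing every invariant measure gives full mass to the closure of the union of ergodic supports. The only cosmetic difference is that for (2) you argue globally via the minimality characterization of $S_\mu$, whereas the paper works pointwise with shrinking balls to find nearby ergodic supports; the substance is identical.
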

\begin{proof}
	(1) It is clear that $\bigcap_{\mu\in M_{erg}(f,  \Lambda)}S_{\mu}\supseteq \bigcap_{\mu\in M(f,  \Lambda)}S_{\mu}$.   So we only need to prove that $$\bigcap_{\mu\in M_{erg}(f,  \Lambda)}S_{\mu}\subseteq\bigcap_{\mu\in M(f,  \Lambda)}S_{\mu}.$$   Indeed,   for any $x\in \bigcap_{\mu\in M_{erg}(f,  \Lambda)}S_{\mu}$ and $\varepsilon>0$,   one has
	\begin{equation}\label{eq-mu-ergodic}
		\mu(B(x,  \varepsilon))>0~\textrm{for any}~\mu\in M_{erg}(f,  \Lambda).
	\end{equation}
	If $\nu(B(x,  \varepsilon))=0$ for some $\nu\in M(f,  \Lambda)$,   then by the ergodic decomposition Theorem \cite{Walters},   there is a unique measure $\tau$ on the Borel subsets of the compact metrizable space $M(f,  \Lambda)$ such that $\tau(M(f,  \Lambda))=1$ and
	$$0=\nu(B(x,  \varepsilon))=\int_{M(f,  \Lambda)}\mu(B(x,  \varepsilon))d\tau(\mu).  $$
	Therefore,   for $\tau$-a. e. $\mu\in M_{erg}(f,  \Lambda)$,   $\mu(B(x,  \varepsilon))=0$,   contradicting \eqref{eq-mu-ergodic}.   Thus
	$x\in \bigcap_{\mu\in M(f,  \Lambda)}S_{\mu}$ which implies that $\bigcap_{\mu\in M_{erg}(f,  \Lambda)}S_{\mu}\subseteq\bigcap_{\mu\in M(f,  \Lambda)}S_{\mu}$.

	(2) It is clear that $\overline{\bigcup_{\mu\in M_{erg}(f,  \Lambda)}S_{\mu}}\subseteq\overline{\bigcup_{\mu\in M(f,  \Lambda)}S_{\mu}}$.   So it is sufficient to prove that $\overline{\bigcup_{\mu\in M_{erg}(f,  \Lambda)}S_{\mu}}\supseteq\overline{\bigcup_{\mu\in M(f,  \Lambda)}S_{\mu}}$.   Indeed,   for any $\mu\in M(f,  \Lambda)$ and any $x\in S_{\mu}$,   one has that
	$$\mu(B(x,  \varepsilon))>0~\textrm{for any}~\varepsilon>0.  $$
	By the ergodic decomposition Theorem,   there is a $\mu_{\varepsilon}\in M_{erg}(f,  \Lambda)$ with $\mu_{\varepsilon}(B(x,  \varepsilon))>0$.   This implies that $B(x,  \varepsilon)\cap S_{\mu_{\varepsilon}}\neq\emptyset$.   Since $\varepsilon>0$ is arbitrary,   $x\in\overline{\bigcup_{\mu\in M_{erg}(f,  \Lambda) S_\mu}}$,   which yields that $\overline{\bigcup_{\mu\in M_{erg}(f,  \Lambda)}S_{\mu}}\supseteq\overline{\bigcup_{\mu\in M(f,  \Lambda)}S_{\mu}}$.
\end{proof}
\begin{Lem}\label{lemma-V*-contains-ergodic}
	For $(X,  f)$ and $x\in X$,
	$M_{erg}(f,  \omega_f(x))\subseteq V^*_f(x)\subseteq M(f,  \omega_f(x)).  $
	If $M_{erg}(f,  \omega_f(x))$ is dense in $M(f,  \omega_f(x))$,   then    $V^*_f(x)=M(f,  \omega_f(x))$.
	%If   $f$ has an invariant measure $\mu$ with $S_\mu=\omega_f(x)$,   then  $ \omega_{\overline{B}}(x)=\omega_f(x).  $
\end{Lem}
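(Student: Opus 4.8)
The plan is to prove the two inclusions $M_{erg}(f,\omega_f(x))\subseteq V_f^*(x)$ and $V_f^*(x)\subseteq M(f,\omega_f(x))$ separately, and then read off the last assertion from the fact (recalled in the excerpt right after the definition of $V_f^*(x)$) that $V_f^*(x)$ is compact, hence closed in the weak$^*$ topology.

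For $V_f^*(x)\subseteq M(f,\omega_f(x))$: suppose $\mu$ is quasi-generic for $x$ along intervals $I_k=[a_k,b_k)$ with $b_k-a_k\to\infty$, and set $\mathcal{E}_{I_k}=\frac{1}{b_k-a_k}\sum_{j=a_k}^{b_k-1}\delta_{f^jx}$, so $\mathcal{E}_{I_k}\to\mu$. Invariance of $\mu$ is the standard Krylov--Bogolyubov estimate: for every $\varphi\in C(X)$ one has $|\int\varphi\circ f\,d\mathcal{E}_{I_k}-\int\varphi\,d\mathcal{E}_{I_k}|\le 2\|\varphi\|/(b_k-a_k)\to 0$. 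For the support, I would use that the closed sets $\overline{\bigcup_{k\ge n}\{f^kx\}}$ decrease to $\omega_f(x)$, so by compactness, for each $\eps>0$ there is $N$ with $f^jx\in B(\omega_f(x),\eps)$ for all $j\ge N$; as soon as $a_k\ge N$, the measure $\mathcal{E}_{I_k}$ is carried by the closed set $\overline{B(\omega_f(x),\eps)}$, hence so is the weak$^*$ limit $\mu$, and letting $\eps\downarrow 0$ gives $S_\mu\subseteq\omega_f(x)$, i.e. $\mu\in M(f,\omega_f(x))$.

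The substantial inclusion is $M_{erg}(f,\omega_f(x))\subseteq V_f^*(x)$. Fix $\mu\in M_{erg}(f,\omega_f(x))$. Applying Birkhoff's ergodic theorem to the subsystem $(\omega_f(x),f|_{\omega_f(x)},\mu)$ and using that $C(X)$ has a countable dense subset, the set of $\mu$-generic points has full $\mu$-measure, so I may pick $z\in S_\mu\subseteq\omega_f(x)$ with $\mathcal{E}_m(z)\to\mu$ in the weak$^*$ topology. Since $z\in\omega_f(x)$, there is a sequence $n_i\to\infty$ with $f^{n_i}x\to z$. For each fixed $m$, by uniform continuity of $f^0,\dots,f^{m-1}$ on the compact space $X$, I can choose $i(m)$ so large (in particular with $n_{i(m)}\ge 1$) that $\max_{0\le j<m}d(f^{n_{i(m)}+j}x,f^jz)<1/m$; then Lemma~\ref{lem:prohorov} gives $\rho(\mathcal{E}_m(f^{n_{i(m)}}x),\mathcal{E}_m(z))<1/m$, so $\mathcal{E}_m(f^{n_{i(m)}}x)\to\mu$. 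Since $\mathcal{E}_m(f^{n_{i(m)}}x)=\frac1m\sum_{j=n_{i(m)}}^{n_{i(m)}+m-1}\delta_{f^jx}$, the intervals $I_m:=[n_{i(m)},n_{i(m)}+m)$ witness that $x$ is quasi-generic for $\mu$, i.e. $\mu\in V_f^*(x)$.

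Finally, the two inclusions give $M_{erg}(f,\omega_f(x))\subseteq V_f^*(x)\subseteq M(f,\omega_f(x))$; and if $M_{erg}(f,\omega_f(x))$ is dense in $M(f,\omega_f(x))$, then, $V_f^*(x)$ being closed, $M(f,\omega_f(x))=\overline{M_{erg}(f,\omega_f(x))}\subseteq V_f^*(x)$, which together with the reverse inclusion forces $V_f^*(x)=M(f,\omega_f(x))$. I expect the main obstacle to be the inclusion $M_{erg}(f,\omega_f(x))\subseteq V_f^*(x)$, since it requires a diagonal selection of the scale $m$ together with a time $n_{i(m)}$ at which the orbit of $x$ passes very near a $\mu$-generic point $z$; the remaining parts are routine.
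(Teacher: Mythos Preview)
Your proof is correct and follows the same route as the paper, which simply cites \cite[Proposition~3.9]{Furst} for the inclusion $M_{erg}(f,\omega_f(x))\subseteq V_f^*(x)$ and invokes the compactness of $V_f^*(x)$ (also attributed to \cite{Furst}) for the density conclusion; your diagonal argument via a $\mu$-generic point $z\in\omega_f(x)$ is exactly the content of that proposition. One small caveat in your support argument: the definition of quasi-generic only requires $b_k-a_k\to\infty$, not $a_k\to\infty$, so the phrase ``as soon as $a_k\ge N$'' needs the obvious fix that the contribution to $\mathcal{E}_{I_k}$ from indices $j<N$ has total mass at most $N/(b_k-a_k)\to 0$.
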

\begin{proof}
	From \cite [Proposition 3.9,   Page 65] {Furst} we know that for a point $x_0$ and an ergodic measure $\mu_0\in M(f,  \omega_f(x_0))$,   $x_0$ is quasi-generic for $\mu_0.  $ So   $M_{erg}(f,  \omega_f(x))\subseteq V_f^*(x)\subseteq M(f,  \omega_f(x))$. If  $M_{erg}(f,  \omega_f(x))$ is dense in $ M(f,  \omega_f(x))$, then $V^*_f(x)=M(f,  \omega_f(x))$.
\end{proof}

{
	\begin{Lem}\label{lem-X-B-lower-star}
		For $(X,  f)$ and $x\in X$,   $z\in \omega_{\underline{B}}(x)$ if and only if for any $\varepsilon>0$,   there exists an $N\in\mathbb{N}$ such that for any $N$ consecutive positive integers $n+1,  \cdots,  n+N$,   $d(f^{n+i}(x),  z)<\varepsilon$ for some $1\leq i\leq N$.
	\end{Lem}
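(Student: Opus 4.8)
The plan is to reduce the assertion to the elementary fact that, for a set $S\subseteq\N$, one has $B_*(S)>0$ if and only if $S$ is syndetic, and then to apply this with $S=N(x,B_\eps(z))$ for each $\eps>0$.

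First I would unwind the definitions. By the definition of $\omega_{\underline B}$, the condition $z\in\omega_{\underline B}(x)$ means exactly that for every $\eps>0$ the visiting-time set $N(x,B_\eps(z))=\{n\ge 1:\ d(f^nx,z)<\eps\}$ has $B_*(N(x,B_\eps(z)))>0$. On the other hand, the condition in the statement says precisely that for every $\eps>0$ there is $N\in\N$ such that every block of $N$ consecutive positive integers meets $N(x,B_\eps(z))$, i.e.\ $N(x,B_\eps(z))$ is syndetic with gap bound $N$ (up to the harmless index shift between the blocks $\{n+1,\dots,n+N\}$ used here and $\{n,\dots,n+N\}$ used in the paper's definition of syndeticity). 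So it suffices to fix $\eps>0$, put $S=N(x,B_\eps(z))$, and show $B_*(S)>0\iff S$ is syndetic; quantifying over $\eps$ then gives the Lemma.

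For ``$B_*(S)>0\Rightarrow S$ syndetic'' I would argue by contraposition: if $S$ is not syndetic then $S$ has arbitrarily long gaps, so for every $k$ there is an interval $I_k$ of $k$ consecutive integers with $S\cap I_k=\emptyset$, giving $|S\cap I_k|/|I_k|=0$ with $|I_k|\to\infty$; since $B_*(S)$ is the $\liminf$ of $|S\cap I|/|I|$ over finite intervals $I$ of consecutive integers as $|I|\to\infty$, this forces $B_*(S)=0$. For the converse, suppose $S$ is syndetic with gap bound $N$: given any interval $I$ of consecutive integers with $|I|=m\ge N$, I partition it into $\lfloor m/N\rfloor$ consecutive sub-blocks of length $N$ (discarding a remainder block of length $<N$); each sub-block meets $S$, hence $|S\cap I|\ge\lfloor m/N\rfloor\ge m/N-1$, so $|S\cap I|/m\ge 1/N-1/m$, and letting $m\to\infty$ yields $B_*(S)\ge 1/N>0$.

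There is no substantial obstacle in this argument; the Lemma is essentially a restatement of the definition of $\omega_{\underline B}(x)$ via the equivalence ``positive Banach lower density $\equiv$ syndetic''. The two points needing care are that the Banach lower density must be computed over \emph{intervals of consecutive integers} (as fixed earlier in the paper), so that arbitrarily long gaps genuinely witness $B_*(S)=0$; and the off-by-one bookkeeping in the two formulations of syndeticity, which only affects the numerical value of the gap constant, not the statement.
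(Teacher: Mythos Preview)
Your proposal is correct and takes essentially the same approach as the paper: both arguments reduce to the elementary equivalence ``$B_*(S)>0$ iff $S$ is syndetic,'' proving one direction by partitioning a long interval into blocks of length $N$ (the paper obtains the bound $\tfrac{1}{2N}$, you obtain $\tfrac{1}{N}-\tfrac{1}{m}$) and the other by contraposition via arbitrarily long gaps. The only difference is presentational---you isolate the density/syndetic equivalence explicitly before applying it to $S=N(x,B_\eps(z))$, while the paper runs the two directions inline.
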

	\begin{proof}
		Sufficiency: Let $I_k=[a_k,  b_k)$ be a sequence of positive integer intervals with $b_k-a_k\to\infty$ such that
		$$\lim_{k\to\infty}\frac{|N(x,  B(z,\varepsilon))\cap I_k|}{|I_k|}=\liminf_{|I|\to\infty}\frac{|N(x,  B(z,\varepsilon))\cap I|}{|I|}.  $$
		Let $b_k-a_k=l_kN+r_k,  0\leq r_k\leq N-1$ for each $k\in\mathbb{N}$.   Then for $k$ large enough such that $l_k\geq 1$,   one has
		$$\frac{|N(x,  B(z,\varepsilon)))\cap I_k|}{|I_k|}\geq\frac{l_k}{l_kN+r_k}\geq\frac{1}{2N}.  $$
		This implies that
		$$\liminf_{|I|\to\infty}\frac{|N(x,  B(z,\varepsilon)))\cap I|}{|I|}=\lim_{k\to\infty}\frac{|N(x,  B(z,\varepsilon)))\cap I_k|}{|I_k|}\geq \frac{1}{2N}>0.  $$
		Since $\varepsilon$ is arbitrary,   we see that $z\in \omega_{\underline{B}}(x)$.
		
		Necessity: Otherwise, there is $\varepsilon_0>0$ and for any $k\in\mathbb{N}$,   there exists an $n_k\in\mathbb{N}$ such that $d(f^{n_k+i}(x),  z)\geq\varepsilon_0$ for $1\leq i\leq k$.   Now consider the sequence of intervals $I_k=[n_k+1,  n_k+k]$.   Then for any $k\in\mathbb{N}$,
		$$|N(x,  B(z,\varepsilon_0))\cap I_k|=0.  $$
		This implies that
		$$\liminf_{|I|\to\infty}\frac{|N(x,  B(z,\varepsilon_0))\cap I|}{|I|}=\lim_{k\to\infty}\frac{|N(x,  B(z,\varepsilon_0))\cap I_k|}{|I_k|}=0,  $$
		contradicting the fact that $z\in \omega_{\underline{B}}(x)$.
	\end{proof}
	\begin{Cor}\label{cor-X-B-lower-star-is-minimal}
		Fix any $x\in X$.   Then for any $y\in \omega_f(x)$,   $\omega_{\underline{B}}(x)\subseteq \overline{\operatorname{orb}(y,  f)}$.   As a result,   $\omega_{\underline{B}}(x)$ is either empty or a minimal set.
	\end{Cor}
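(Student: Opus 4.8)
The plan is to deduce both assertions directly from the window characterization in Lemma \ref{lem-X-B-lower-star}.

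First I would prove the inclusion. Fix $y\in\omega_f(x)$, pick $z\in\omega_{\underline{B}}(x)$ and $\eps>0$. By Lemma \ref{lem-X-B-lower-star} there is $N\in\N$ such that for every $N$ consecutive positive integers $n+1,\dots,n+N$ one has $d(f^{n+i}x,z)<\eps/2$ for some $1\le i\le N$. Since $y\in\omega_f(x)$, choose $n_j\to\infty$ with $f^{n_j}x\to y$. Applying the window property to the block $\{n_j+1,\dots,n_j+N\}$ gives, for each $j$, an index $i_j\in\{1,\dots,N\}$ with $d(f^{n_j+i_j}x,z)<\eps/2$. By the pigeonhole principle some $i_0\in\{1,\dots,N\}$ equals $i_j$ for infinitely many $j$; along that subsequence $f^{n_j+i_0}x=f^{i_0}(f^{n_j}x)\to f^{i_0}y$ by continuity of $f^{i_0}$, while $d(f^{n_j+i_0}x,z)<\eps/2$ throughout, so letting $j\to\infty$ yields $d(f^{i_0}y,z)\le\eps/2<\eps$. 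As $\eps>0$ is arbitrary, $z\in\overline{\orb(y,f)}$, which proves $\omega_{\underline{B}}(x)\subseteq\overline{\orb(y,f)}$.

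Then I would derive minimality. Assume $\omega_{\underline{B}}(x)\ne\emptyset$; recall it is a nonempty closed $f$-invariant subset of $\omega_f(x)$. For any $y\in\omega_{\underline{B}}(x)$ we have $y\in\omega_f(x)$, so the inclusion above gives $\omega_{\underline{B}}(x)\subseteq\overline{\orb(y,f)}$; conversely $\orb(y,f)\subseteq\omega_{\underline{B}}(x)$ by (forward) invariance, and taking closures, $\overline{\orb(y,f)}\subseteq\omega_{\underline{B}}(x)$. Hence $\overline{\orb(y,f)}=\omega_{\underline{B}}(x)$ for every $y\in\omega_{\underline{B}}(x)$, which says precisely that $\omega_{\underline{B}}(x)$ is a minimal set.

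The only genuinely substantive step is the pigeonhole argument linking Lemma \ref{lem-X-B-lower-star} to the approximation $f^{n_j}x\to y$; everything else is bookkeeping. The one subtlety worth flagging is the use of radius $\eps/2$ in place of $\eps$, so that passing to the limit in the strict inequality $d(f^{n_j+i_0}x,z)<\eps/2$ still yields $d(f^{i_0}y,z)<\eps$.
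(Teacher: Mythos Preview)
Your proof is correct. The paper's argument for the inclusion proceeds by contradiction: assuming $z\notin\overline{\orb(y,f)}$, it sets $2\eps=\dist(z,\overline{\orb(y,f)})$, invokes Lemma~\ref{lem-X-B-lower-star} to obtain $N$, then uses uniform continuity of $f,\dots,f^N$ to choose $\delta$ so that a single approximation $d(f^px,y)<\delta$ forces $d(f^{p+q}x,f^qy)<\eps$ for all $1\le q\le N$; combining this with the window hit $d(f^{p+q}x,z)<\eps$ yields a contradiction. Your pigeonhole argument is a clean alternative: rather than uniform continuity over all $N$ iterates, you pass to a subsequence where the hitting index $i_j$ is constant, and then only need continuity of the single map $f^{i_0}$. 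For minimality, the paper appeals to Zorn's lemma to extract a minimal subset of $\omega_f(x)$ and then shows $\omega_{\underline{B}}(x)$, being nonempty closed invariant inside that minimal set, must coincide with it; your direct verification that $\overline{\orb(y,f)}=\omega_{\underline{B}}(x)$ for every $y\in\omega_{\underline{B}}(x)$ is more self-contained and avoids the appeal to Zorn. Both routes are short; yours is marginally more elementary on both counts.
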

	\begin{proof}
		If $\omega_{\underline{B}}(x)=\emptyset$,   there is nothing to prove.   So we suppose $\omega_{\underline{B}}(x)\neq\emptyset$.   Select an arbitrary $z\in \omega_{\underline{B}}(x)$.   For any $y\in\omega_f(x)$,   if $z\notin \overline{\operatorname{orb}(y,  f)}$,   then let $2\varepsilon:=\operatorname{dist}(z,  \overline{\operatorname{orb}(y,  f)})>0$.   By Lemma \ref{lem-X-B-lower-star},   there is an $N\in\mathbb{N}$ such that for any $n\in\mathbb{N}$,   $d(f^{n+i}(x),  z)<\varepsilon$ for some $1\leq i\leq N$.   Now choose a $0<\delta<\varepsilon$ such that $d(u,  v)<\delta$ implies that $d(f^j(u),  f^j(v))<\varepsilon$,   $1\leq j\leq N$.   Moreover,   since $y\in\omega_f(x)$,   we select a $p\in\mathbb{N}$ such that $d(f^p(x),  y)<\delta$.   Then there exist a $1\leq q\leq N$ such that $d(f^{p+q}(z),  z)<\varepsilon$.   However,   the selection of $\delta$ and the condition $1\leq q\leq N$ indicate that $d(f^{p+q}(x),  f^q(y))<\varepsilon$.   So we have $d(z,  f^q(y))<2\varepsilon$,   contradicting the definition of $\varepsilon$.   Therefore,   $z\in \overline{\operatorname{orb}(y,  f)}$.   Since $z\in \omega_{\underline{B}}(x)$ is arbitrary,   we have $\omega_{\underline{B}}(x)\subseteq \overline{\operatorname{orb}(y,  f)}$.
		
		Moreover,   it is well know that (by Zorn Lemma) every topological dynamical system has a minimal system.   So we select some $z$ from a minimal subset of $\omega_f(x)$.   Therefore,   $\overline{\operatorname{orb}(z,  f)}$ is minimal.   Moreover,   since $\omega_{\underline{B}}(x)\subseteq \overline{\operatorname{orb}(z,  f)}$ and $\omega_{\underline{B}}(x)$ is nonempty,   closed and $f$-invariant,   $\omega_{\underline{B}}(x)$ is minimal,   which completes the proof.
	\end{proof}	
	Remark that if $\omega_{\underline{B}}(x)\neq \emptyset,$ then by Corollary \ref{cor-X-B-lower-star-is-minimal} there is a unique minimal subset in $\omega_f(x)$ which is exactly  $\omega_{\underline{B}}(x)$.

Now, we give the proof of Theorem \ref{thm-density-basic-property}.
\subsection{Proof of Theorem \ref{thm-density-basic-property}. }
		(1) On one hand,   consider an arbitrary $y\in \omega_{\underline{d}}(x)$.   For any $\mu\in V_f(x)$,   let $m_k\to\infty$ be such that $\lim_{k\to\infty}\mathcal{E}_{m_k}(x)=\mu$.   Then for any $\varepsilon>0$,   one has
		\begin{eqnarray*}
			\mu(B(y,4\varepsilon))\geq \mu(\overline{B(y,2\varepsilon)}) &\geq& \limsup_{k\to\infty}\mathcal{E}_{m_k}(\overline{B(y,2\varepsilon)}) \\
			&=&  \limsup_{k\to\infty}\frac{1}{m_k}\sum_{i=0}^{m_k-1}\delta_{f^i(x)}(\overline{B(y,2\varepsilon)})\\
			&\geq&  \liminf_{n\to\infty}\frac1n\sum_{j=0}^{n-1}\delta_{f^j(x)}(\overline{B(y,2\varepsilon)})\\
			&\geq&  \liminf_{n\to\infty}\frac1n\sum_{j=0}^{n-1}\delta_{f^j(x)}(B(y,\varepsilon))>0.
		\end{eqnarray*}
		This implies that $y\in S_{\mu}$ and thus $\omega_{\underline{d}}(x)\subseteq \bigcap_{\mu\in V_f(x)} S_\mu$.
		
		On the other hand,   consider an arbitrary $y\in\bigcap_{\mu\in V_f(x)} S_\mu$.   For any $\varepsilon>0$,   let $n_k\to\infty$ be such that
		$$\lim_{k\to\infty}\frac{1}{n_k}\sum_{i=0}^{n_k-1}\delta_{f^i(x)}(B(y,\varepsilon))=
		\liminf_{n\to\infty}\frac1n\sum_{j=0}^{n-1}\delta_{f^j(x)}(B(y,\varepsilon)).  $$
		Then choose a subsequence $n_{k_l}$ of $n_k$ such that $\lim_{l\to\infty}\mathcal{E}_{n_{k_l}}(x)=\tau$ for some $\tau\in V_f(x)$.   Then
		$$\liminf_{n\to\infty}\frac1n\sum_{j=0}^{n-1}\delta_{f^j(x)}(B(y,\varepsilon))=\lim_{k\to\infty}\frac{1}{n_k}\sum_{i=0}^{n_k-1}\delta_{f^i(x)}(B(y,\varepsilon))=\lim_{l\to\infty}\frac{1}{n_{k_l}}\sum_{i=0}^{n_{k_l}-1}\delta_{f^i(x)}(B(y,\varepsilon))
		\geq\tau(B(y,\varepsilon))>0.  $$
		Therefore,   $y\in \omega_{\underline{d}}(x)$ and thus $\omega_{\underline{d}}(x)\supseteq \bigcap_{\mu\in V_f(x)} S_\mu$.

		(2)  On one hand,   consider an arbitrary $y\in \omega_{\overline{d}}(x)$.   Then for any $\varepsilon>0$,   one has
		$$\limsup_{n\to\infty}\frac1n\sum_{i=0}^{n-1}\delta_{f^i(x)}(B(y,\varepsilon))>0.  $$
		Now choose a sequence $n_k\to\infty$ such that
		$$\lim_{k\to\infty}\frac{1}{n_k}\sum_{i=0}^{n_k-1}\delta_{f^i(x)}(\overline{B(y,2\varepsilon)})=
		\limsup_{n\to\infty}\frac1n\sum_{j=0}^{n-1}\delta_{f^j(x)}(\overline{B(y,2\varepsilon)}).  $$
		Then choose a subsequence $n_{k_l}$ of $n_k$ such that $\lim_{l\to\infty}\mathcal{E}_{n_{k_l}}(x)=\tau$ for some $\tau\in V_f(x)$.   Then
		\begin{eqnarray*}
			\tau(B(y,4\varepsilon))\geq\tau(\overline{B(y,2\varepsilon)}) &\geq& \lim_{l\to\infty}\frac{1}{n_{k_l}}\sum_{i=0}^{n_{k_l}-1}\delta_{f^i(x)}(\overline{B(y,2\varepsilon)}) \\
			&=&  \lim_{k\to\infty}\frac{1}{n_k}\sum_{i=0}^{n_k-1}\delta_{f^i(x)}(\overline{B(y,2\varepsilon)})\\
			&=&  \limsup_{n\to\infty}\frac1n\sum_{j=0}^{n-1}\delta_{f^j(x)}(\overline{B(y,2\varepsilon)})\\
			&\geq&  \limsup_{n\to\infty}\frac1n\sum_{j=0}^{n-1}\delta_{f^j(x)}(B(y,\varepsilon))>0.
		\end{eqnarray*}
		Therefore,   $B(y,4\varepsilon)\cap S_{\tau}\neq\emptyset$.   So for each $k\in\mathbb{N}$,   we obtain a $y_k\in B(y,1/k)\cap S_{\mu_k}$ for some $\mu_k\in V_f(x)$.   Thus $y_k\to y$ as $k\to\infty$,   which implies that $y\in \overline{\bigcup_{\mu\in V_f(x)} S_\mu}$ and thus $\omega_{\overline{d}}(x)\subseteq\overline{\bigcup_{\mu\in V_f(x)} S_\mu}$.
		
		On the other hand,   consider an arbitrary $y\in \overline{\bigcup_{\mu\in V_f(x)} S_\mu}$.   For each $k\in\mathbb{N}$,   choose a $y_k\in B(y,1/k)\cap S_{\mu_k}$ for some $\mu_k\in V_f(x)$.   Then $B(y,1/k)$ is a neighborhood of $y_k\in S_{\mu_k}$.   This implies that
		$\mu_k(B(y,1/k))>0$.   Since $\mu_k\in V_f(x)$,   we choose a sequence $k_l\to\infty$ such that $\lim_{l\to\infty}\mathcal{E}_{k_l}(x)=\mu_k$.   Then one sees that
		$$\limsup_{n\to\infty}\frac1n\sum_{i=0}^{n-1}\delta_{f^i(x)}(B(y,1/k))
		\geq\lim_{l\to\infty}\frac{1}{k_l}\sum_{i=0}^{k_l-1}\delta_{f^i(x)}(B(y,1/k))\geq\mu_k(B(y,1/k))>0.  $$
		Since $k\in\mathbb{N}$ is arbitrary,   one has
		$y\in \omega_{\overline{d}}(x)$.   Since $y\in \overline{\bigcup_{\mu\in V_f(x)} S_\mu}$ is arbitrary,   we see $\omega_{\overline{d}}(x)\supseteq\overline{\bigcup_{\mu\in V_f(x)} S_\mu}$.

		(3) By Corollary \ref{cor-X-B-lower-star-is-minimal}, for any $x\in X$ $\omega_{\underline{B}}(x)$ is either empty or a minimal set. By Lemma \ref{lemma-V*-contains-ergodic}, $M_{erg}(f,  \omega_f(x))\subseteq V^*_f(x)\subseteq M(f,  \omega_f(x)).$ Thus, by  Lemma \ref{lem-cap-cup} $$ \bigcap_{\mu\in V^*_f(x)} S_\mu =  {\bigcap_{\mu\in M(f,  \omega_f(x))}S_{\mu}}= {\bigcap_{\mu\in M_{erg}(f,  \omega_f(x))}S_{\mu}}.  $$  So we only need to prove $\omega_{\underline{B}}(x)= \bigcap_{\mu\in V^*_f(x)} S_\mu$.

		On one hand,   consider an arbitrary $y\in \omega_{\underline{B}}(x)$.   For any $\mu\in V_f^*(x)$,   let $I_k=[a_k,  b_k)$ be a sequence of positive integer intervals with $\lim_{k\to\infty}b_k-a_k=\infty$ such that
		$$\lim_{k\rightarrow\infty}\frac{1}{b_k-a_k}\sum_{j=a_k}^{b_k-1}\delta_{f^j(x)}=\mu.  $$
		Then for any $\varepsilon>0$,   one has
		\begin{eqnarray*}
			\mu(B(y,4\varepsilon))\geq\mu(\overline{B(y,2\varepsilon)}) &\geq& \limsup_{k\to\infty}\frac{1}{b_k-a_k}\sum_{j=a_k}^{b_k-1}\delta_{f^j(x)}(\overline{B(y,2\varepsilon)}) \\
			&=& \limsup_{k\to\infty}\frac{|N(x,  \overline{B(y,2\varepsilon)})\cap I_k|}{|I_k|} \\
			&\geq& \liminf_{|I|\to\infty}\frac{|N(x,  \overline{B(y,2\varepsilon)})\cap I|}{|I|}\\
			&\geq& \liminf_{|I|\to\infty}\frac{|N(x,  B(y,\varepsilon))\cap I|}{|I|}>0.
		\end{eqnarray*}
		This implies that $y\in S_{\mu}$ and thus $\omega_{\underline{B}}(x)\subseteq \bigcap_{\mu\in V_f^*(x)} S_\mu$.
		
		On the other hand,   consider an arbitrary $y\in \bigcap_{\mu\in V_f^*(x)} S_\mu$.   Let $I_k=[a_k,  b_k)$ be a sequences of positive integer intervals with $\lim_{k\to\infty}b_k-a_k=\infty$ such that
		$$\lim_{k\to\infty}\frac{|N(x,  B(y,\varepsilon))\cap I_k|}{|I_k|}
		=\liminf_{|I|\to\infty}\frac{|N(x,  B(y,\varepsilon))\cap I|}{|I|}.  $$
		Then choose a subsequence $I_{k_l}$ of $I_k$ such that $\lim_{l\rightarrow\infty}\frac{1}{b_{k_l}-a_{k_l}}\sum_{j=a_{k_l}}^{b_{k_l}-1}\delta_{f^j(x)}=\tau$ for some $\tau\in V_f^*(x)$.   We have
		\begin{eqnarray*}
			\liminf_{|I|\to\infty}\frac{|N(x,  B(y,\varepsilon))\cap I|}{|I|} &=& \lim_{k\to\infty}\frac{|N(x,  B(y,\varepsilon))\cap I_k|}{|I_k|} \\
			&=& \lim_{l\to\infty}\frac{|N(x,  B(y,\varepsilon))\cap I_{k_l}|}{|I_{k_l}|}\\
			&=& \lim_{l\rightarrow\infty}\frac{1}{b_{k_l}-a_{k_l}}\sum_{j=a_{k_l}}^{b_{k_l}-1}\delta_{f^j(x)}(B(y,\varepsilon)) \\
			&\geq& \tau(B(y,\varepsilon))>0.
		\end{eqnarray*}
		Therefore,   $y\in \omega_{\underline{B}}(x)$ and thus $\omega_{\underline{B}}(x)\supseteq \bigcap_{\mu\in V^*_f(x)} S_\mu$.

		(4) By Lemma \ref{lemma-V*-contains-ergodic},
		$M_{erg}(f,  \omega_f(x))\subseteq V^*_f(x)\subseteq M(f,  \omega_f(x)).  $
		Thus, by  Lemma \ref{lem-cap-cup}
		$$ \overline{\bigcup_{\mu\in V^*_f(x)} S_\mu}=
		\overline{\bigcup_{\mu\in M(f,  \omega_f(x))}S_{\mu}}=\overline{\bigcup_{\mu\in M_{erg}(f,  \omega_f(x))}S_{\mu}}\neq \emptyset.$$
		So we only need to prove $\omega_{\overline{B}}(x)= \overline{\bigcup_{\mu\in V^*_f(x)} S_\mu}$.
		
		On one hand,   consider an arbitrary $y\in \omega_{\overline{B}}(x)$.   Then for any $\varepsilon>0$,   one has
		$$\limsup_{|I|\to\infty}\frac{|N(x,  B(y,\varepsilon))\cap I|}{|I|}>0.  $$
		Now choose a sequence of positive integer intervals $I_k=[a_k,  b_k)$ with $b_k-a_k\to\infty$ such that
		$$\lim_{k\to\infty}\frac{|N(x,  B(y,\varepsilon))\cap I_k|}{|I_k|}=
		\limsup_{|I|\to\infty}\frac{|N(x,  B(y,\varepsilon))\cap I|}{|I|}.  $$
		Then choose a subsequence $I_{k_l}$ of $I_k$ such that $\lim_{l\rightarrow\infty}\frac{1}{b_{k_l}-a_{k_l}}\sum_{j=a_{k_l}}^{b_{k_l}-1}\delta_{f^j(x)}=\tau$ for some $\tau\in V_f^*(x)$.   We have
		\begin{eqnarray*}
			\tau(B(y,4\varepsilon))\geq\tau(\overline{B(y,2\varepsilon)}) &\geq& \lim_{l\rightarrow\infty}\frac{1}{b_{k_l}-a_{k_l}}\sum_{j=a_{k_l}}^{b_{k_l}-1}\delta_{f^j(x)}(\overline{B(y,2\varepsilon)}) \\
			&=& \lim_{k\to\infty}\frac{1}{b_k-a_k}\sum_{i=a_k}^{b_k-1}\delta_{f^i(x)}(\overline{B(y,2\varepsilon)}) \\
			&=& \lim_{k\to\infty}\frac{|N(x,  \overline{B(y,2\varepsilon)})\cap I_k|}{|I_k|}\\
			&=& \limsup_{|I|\to\infty}\frac{|N(x,  \overline{B(y,2\varepsilon)})\cap I|}{|I|}\\
			&\geq& \limsup_{|I|\to\infty}\frac{|N(x,  B(y,\varepsilon))\cap I|}{|I|}>0.
		\end{eqnarray*}
		Therefore,   $B(y,4\varepsilon)\cap S_{\tau}\neq\emptyset$.   So for each $k\in\mathbb{N}$,   we obtain a $y_k\in B(y,1/k)\cap S_{\mu_k}$ for some $\mu_k\in V_f(x)$.   Thus $y_k\to y$ as $k\to\infty$,   which implies that $y\in \overline{\bigcup_{\mu\in V_f(x)} S_\mu}$ and thus $\omega_{\overline{B}}(x)\subseteq\overline{\bigcup_{\mu\in V_f^*(x)} S_\mu}$.
		
		On the other hand,   consider an arbitrary $y\in \overline{\bigcup_{\mu\in V_f^*(x)} S_\mu}$.   For each $k\in\mathbb{N}$,   choose a $y_k\in B(y,1/k)\cap S_{\mu_k}$ for some $\mu_k\in V_f^*(x)$.   Then $B(y,1/k)$ is a neighborhood of $y_k\in S_{\mu_k}$.   This implies that
		$\mu_k(B(y,1/k))>0$.   Since $\mu_k\in V_f^*(x)$,   we choose a sequence $k_l\to\infty$ such that $$\lim_{l\rightarrow\infty}\frac{1}{b_{k_l}-a_{k_l}}\sum_{j=a_{k_l}}^{b_{k_l}-1}\delta_{f^j(x)}=\mu_k.  $$ Then one sees that
		\begin{eqnarray*}
			\limsup_{|I|\to\infty}\frac{|N(x,  B(y,1/k))\cap I|}{|I|} &\geq& \lim_{l\to\infty}\frac{|N(x,  B(y,1/k))\cap I_{k_l}|}{|I_{k_l}|} \\
			&=& \lim_{l\rightarrow\infty}\frac{1}{b_{k_l}-a_{k_l}}\sum_{j=a_{k_l}}^{b_{k_l}-1}\delta_{f^j(x)}(B(y,1/k)) \\
			&\geq& \mu_k(B(y,1/k))>0.
		\end{eqnarray*}
		Therefore,   for any $\varepsilon>0$,   let $k\in\mathbb{N}$ be such that $0<1/k<\varepsilon$,   we see that
		$$\limsup_{|I|\to\infty}\frac{|N(x,  B(y,\varepsilon))\cap I|}{|I|}\geq\limsup_{|I|\to\infty}\frac{|N(x,  B(y,1/k))\cap I|}{|I|}>0$$
		This indicates that $y\in \omega_{\overline{B}}(x)$ and thus $\omega_{\overline{B}}(x)\supseteq\overline{\bigcup_{\mu\in V_f^*(x)} S_\mu}$.

		(5) By ergodic decomposition Theorem,   we only prove
		the case that $\mu$ is ergodic.   By ergodicity,   for $\mu$-a.e.   $x\in X,   $
		any $y\in S_\mu$ and  any $\varepsilon>0,  \,  N (x,  B(y,\varepsilon))$
		has positive   density  (equal to   $\mu(B(y,\varepsilon))$)  w.  r.  t.
		$\underline{d}$ and  $\overline{d}.  $ So for $\mu$-a.e.   $x\in X,   $ $S_\mu\subseteq \omega_{\underline{d}}(x)$.   By ergodicity,   we also have that for $\mu$-a.e.   $x\in X,   $
		$S_\mu=\omega_f(x)=\overline{\operatorname{orb}(x,f)}.  $  So we conclude the result.   \qed

\section{The existence of 50 cases: Proof of Theorem \ref{Theorem A-new}}\label{section 4 new}
In this section, we will prove Theorem \ref{Theorem A-new} assuming Theorem \ref{Theorem B}. Fix $m\geq2$, we consider the one-sided full shift $(\Sigma_m^+,\sigma)$, it is clear that $(\Sigma_m^+,\sigma)$ is topologically transitive topologically expanding and thus by Theorem \ref{Theorem B}, for any $\beta_1\beta_2\beta_3\beta_4\beta_50\in\mathfrak{A}$, there always exists a point in $(\Sigma_m^+,\sigma)$ satisfying \textbf{Case }($\beta_1\beta_2\beta_3\beta_4\beta_50$). Hence, to show Theorem \ref{Theorem A-new}, it is enough to show that for any $0\alpha_2\alpha_3\alpha_401\in\mathfrak{A}$, there exists a point in $(\Sigma_m^+,\sigma)$ satisfying \textbf{Case }($0\alpha_2\alpha_3\alpha_401$). We need the following results obtained in \cite{Jiang-Tian-2024}.
\begin{Lem}\label{Lemma-map-T-1}\cite[Proposition 3.1 and Proposition 3.2]{Jiang-Tian-2024}
	Given $x\in(\Sigma_m^+,\sigma)$ with $\overline{\operatorname{orb}(x,\sigma)}\subsetneq\Sigma_m^+$, there exists $y\in\Sigma_m^+$ such that
	\begin{enumerate}[(1)]
		\item $\omega_{\overline{B}}(x)\subset\omega_{\overline{B}}(y)\subset\overline{\operatorname{orb}(x,\sigma)}\subsetneq\omega_\sigma(y)=\overline{\operatorname{orb}(y,\sigma)}$;
		\item $\omega_{\overline{d}}(x)=\omega_{\overline{d}}(y)$ and $\omega_{\underline{d}}(x)=\omega_{\underline{d}}(y)$.
	\end{enumerate}
\end{Lem}
\begin{Lem}\label{Lemma-map-T-2}
	Given $x\in(\Sigma_m^+,\sigma)$ with $\overline{\operatorname{orb}(x,\sigma)}\subsetneq\Sigma_m^+$, let $y\in\Sigma_m^+$ be the point obtained by Lemma \ref{Lemma-map-T-1}, then $\omega_{\underline{B}}(x)=\omega_{\underline{B}}(y)$.
\end{Lem}
\begin{proof}
	By Theorem \ref{thm-density-basic-property}(4), we have that $$\omega_{\overline{B}}(x)=\overline{\bigcup_{\mu\in M(\sigma,  \omega_\sigma(x))}S_\mu}\text{ and }\omega_{\overline{B}}(y)=\overline{\bigcup_{\mu\in M(\sigma,  \omega_\sigma(y))}S_\mu}.$$
	Since $M(\sigma,\omega_\sigma(x))=M(\sigma,\overline{\operatorname{orb}(x,\sigma)})$, by Lemma \ref{Lemma-map-T-1}(1), one has $\omega_{\overline{B}}(y)\subset\omega_\sigma(x)$ and $\omega_\sigma(x)\subset\omega_\sigma(y)$, as a result, $M(\sigma,\omega_\sigma(x))=M(\sigma,\omega_\sigma(y))$. Finally, by Theorem \ref{thm-density-basic-property}(3), we have that $\omega_{\underline{B}}(x)=\omega_{\underline{B}}(y)$.
\end{proof}
Now, given $0\alpha_2\alpha_3\alpha_401\in\mathfrak{A}$, we choose $x\in\Sigma_m^+$ satisfying \textbf{Case }$(0\alpha_2\alpha_3\alpha_410)$, then it is clear that $\overline{\operatorname{orb}(x,\sigma)}\subsetneq\Sigma_m^+$. By Lemma \ref{Lemma-map-T-1} and Lemma \ref{Lemma-map-T-2}, we can find $y$ satisfying \textbf{Case }$(0\alpha_2\alpha_3\alpha_401)$. Now we complete the proof of Theorem \ref{Theorem A-new}.

\section{Saturated set and entropy estimation}\label{section 6}
As suggested by Theorem \ref{thm-density-basic-property}, the study of $V_f(x)$ and $\omega(x)$ will help us judge the statistical behaviour of $x$. An important tool for this is saturated set. Let $(X,  f)$ be a topological dynamical system.   Recall that, for $x\in X$, $V_f(x)$ is  a nonempty compact connected subset of $M(f,  X)$. So for any nonempty compact connected subset $K$ of $M(f,  X)$,   it is logical to define the following set
$$G_K=G_K(f):=\{x\in X:V_f(x)=K\}.  $$
We call $G_K$ the saturated set of $K$.   Particularly,   if $K=\{\mu\}$ for some ergodic measure $\mu$,   then $G_{\mu}=G_{\{\mu\}}$ is just the generic points of $\mu$.   Saturated sets are studied by Pfister and Sullivan in \cite{PS2007}.
Furthermore,   we define  $G^T_{K}=\{x\in Tran(f): V_f(x)=K\}$  and call $G^T_{K}$ the transitively-saturated set of $K$.
\begin{Def}
	We say that  the system $f$ has {\it saturated} property or $f$ is {\it saturated},   if  for any  nonempty compact connected set $K \subseteq M (f,  X ),  $
	\begin{eqnarray} \label{eq-saturated-definition}
		G_K\neq\emptyset \text{ and } h_{top}(f,  G_K)=\inf\{h_\mu (f):  \mu\in K\}.
	\end{eqnarray}
	We say that  the system $f$ has {\it locally-saturated} property or $f$ is {\it locally-saturated},   if  for any nonempty open set $U\subseteq X$ and any  nonempty compact connected set $K \subseteq M (T,  X ),  $
	\begin{eqnarray} \label{eq-saturated-definition-locally}
		G_K\cap U\neq\emptyset \text{ and } h_{top} (f,  G_K\cap U)=\inf\{h_\mu (f): \mu\in K\}.
	\end{eqnarray}
	In parallel,   one can define  {\it(locally-)transitively-saturated},   just replacing $ G_K$ by $ G^T_K $ in (\ref{eq-saturated-definition}) (respectively,   (\ref{eq-saturated-definition-locally})).   On the other hand,   (locally-,   transitively-)single-saturated means (\ref{eq-saturated-definition}) (respectively,   (\ref{eq-saturated-definition-locally})) holds when $K$ is a singleton.
\end{Def}
When the dynamical system $f$ satisfies $g$-almost product and uniform separation property,   Pfister and Sullivan proved in \cite{PS2007} that $f$ is saturated, and Huang,  Tian and Wang proved in \cite{HTW} that $f$ is transitively-saturated if further there is an invariant measure with full support.

Let $\Lambda\subseteq X$ be a closed invariant subset and $K$ is a nonempty compact connected subset of $M(f,  \Lambda)$.   Define
$$G_K^{\Lambda}:=G_K\cap\{x\in X:\omega_f(x)=\Lambda\}.$$

\begin{Def}
	(1)  We say that $\Lambda$ is star-saturated,   if for any nonempty connected compact set $K\subseteq M(f,  \Lambda)$,   one has
	\begin{eqnarray*}
		G_K^{\Lambda}\neq\emptyset \text{ and } h_{top} (f,  G_K^{\Lambda})=\inf\{h_\mu (f):  \mu\in K\}.
	\end{eqnarray*}
	We say that  the system $f$ has {\it star-saturated} property or $f$ is {\it star-saturated},
	if for any internally chain transitive closed invariant subset $\Lambda\subseteq X$,
	$\Lambda$ is star-saturated.
	
	(2)   We say that    $\Lambda$ is {  locally-star-saturated},   if for any nonempty open set $U\subseteq X$ and any  compact connected nonempty set $K \subseteq M (f,  \Lambda ),  $
	\begin{eqnarray*} 
		G_K^{\Lambda}\cap U\neq\emptyset \text{ and } h_{top} (f,  G_K^{\Lambda}\cap U)=\inf\{h_\mu (f):  \mu\in K\}.
	\end{eqnarray*}
	We say that  the system $f$ has {\it locally-star-saturated} property or $f$ is {\it locally-star-saturated},   if  for any internally chain transitive closed invariant subset $\Lambda\subseteq X$,
	$\Lambda$ is locally star-saturated.
\end{Def}

Note that $X$ is  star-saturated $\Leftrightarrow$ $f$ is transitively-saturated,   and for a closed invariant subset $\Lambda\subsetneq X,  $   $f|_\Lambda$ is transitively-saturated $\Rightarrow$ $\Lambda$ is  star-saturated.   However,   remark that $\Lambda$ is  star-saturated   does not imply $f|_\Lambda$ is transitively-saturated,   since some points of $G_K^{\Lambda}$ may not lie in $\Lambda$.

Now we state one result on locally-star-saturated property.
\begin{Thm}\label{G-K-star}
	Suppose $(X,  f)$ is topologically transitive and satisfies s-limit-shadowing property and uniform separation property. 
	Then $f$ is locally-star-saturated.
	\end{Thm}}
	
To prove Theorem \ref{G-K-star}, we need some preparations. First of all, let's recall a result from Bowen \cite{Bowen1973}, which will help us obtain the upper bound of the topological entropy of $G_K$.
\begin{Lem}\cite[Theorem 2]{Bowen1973}\label{lem-Bowen}
	Let $f:X\rightarrow X$ be a continuous map on a compact metric space.   Set
	$$QR(t)=\{x\in X~|~\text{there exists }\tau\in V_{f}(x)~\textrm{with}~h_{\tau}(f)\leq t\}.  $$
	Then
	$h_{top}(f,  QR(t))\leq t.  $
\end{Lem}
\begin{Lem}\label{Mainlemma-prop-mu-gamma-n}
	Suppose $(X,  f)$ satisfies the shadowing property.   Let $\Lambda\subseteq X$ be closed $f$-invariant and internally chain transitive.   Then for any $\eta>0$, any $\alpha>0$, any $\mu\in M(f,  \Lambda)$ and its neighborhood $F_{\mu}$,    there exist $\varepsilon^*_1=\varepsilon^*_1(\eta,  \mu, F_{\mu})> \varepsilon^*_2=\varepsilon^*_2(\eta,  \mu, F_{\mu})>0$ such that for any $0<\varepsilon\leq\varepsilon^*_2$, for any $x\in\Lambda$,  for any $N\in\mathbb{N}$,   there exist $n=n(\eta,  \mu, F_{\mu}, \varepsilon,x)\geq N$ such that for any $p\in\mathbb{N}$,   there exists a $(pn,  \frac{\varepsilon^*_1}{3})$-separated set $\Gamma_{pn}$ so that
	\begin{description}
		\item[(a)] $\Gamma_{pn}\subseteq X_{pn,  F_{\mu}}\cap B(x,  \varepsilon)\cap f^{-pn}B(x,  \varepsilon)$;
		\item[(b)] $\frac{\log |\Gamma_{pn}|}{pn}>\min\{h_{\mu},\alpha\}-\eta$;
		\item[(c)] $d_H(\{f^i(y)\}_{i=0}^{pn-1},  \Lambda)<2\varepsilon$ for any $y\in\Gamma_{pn}.$
	\end{description}
\end{Lem}
\begin{proof}
	Since $F_{\mu}$ is a neighborhood of $\mu$,   there is an $a>0$ such that $\mathcal{B}(\mu,  a)\subseteq F_{\mu}$.   By the ergodic decomposition theorem,   there exists a finite convex combination of ergodic measures $\sum_{i=1}^mc_i\nu_i\in \mathcal{B}(\mu,  \frac a4)$ with $h_{\sum_{i=1}^mc_i\nu_i}>\min\{h_{\mu},\alpha\}-\frac{\eta}{3}$ and $S_{\nu_i}\subset \Lambda$, see also \cite[Lemma 2.5]{Li-Oprocha-2018}.   Moreover,   by the denseness of the rational numbers,   we can choose each $c_i=\frac{b_i}{b}$ with $b_i\in\mathbb{N}$ and $\sum_{i=1}^mb_i=b$.
	
	By Lemma \ref{useful-Proposition},   for each $i$,   there exist $\varepsilon_i>0$ and $n_i\in\mathbb{N}$ such that for any $n\geq n_i$,   there exists an $(n,  \varepsilon_i)$-separated set $\Gamma_n^{\nu_i}\subseteq X_{n,  \mathcal{B}(\nu_i,  \frac{a}{4})}\cap\Lambda$ such that $\frac{\log|\Gamma_n^{\nu_i}|}{n}>h_{\nu_i}-\frac{\eta}{3}$.   Let $\varepsilon^*_1=\min\{\varepsilon_i:1\leq i\leq m\}$ and $\varepsilon^*_2=\min\{\frac{\varepsilon^*_1}{4},  \frac{a}{4}\}$.   By the shadowing property,   for any $0<\varepsilon\leq\varepsilon^*_2$,   there exists a $0<\delta<\varepsilon$ such that any $\delta$-pseudo-orbit can be $\varepsilon$-shadowed.   Moreover,   since $\Lambda$ is compact,   we can cover $\Lambda$ by finite number of open balls $\{B(x_i,  \frac{\delta}{2})\}_{i=1}^l$ with $\{x_i\}_{i=1}^l\subseteq\Lambda$.   Since $\Lambda$ is internally chain transitive,   for each $1\leq i\leq l$,   there exist a $\frac{\delta}{2}$-chain $\mathfrak{C}_{xx_{i}}$ with length $l_{0i}$ connecting $x$ and $x_i,$ and a $\frac{\delta}{2}$-chain $\mathfrak{C}_{x_ix}$ with length $l_{i0}$ connecting $x_i$ and $x$.   Let $L=\max\{l_{0i},  l_{i0}:1\leq i\leq l\}$.   Now choose $k\in\mathbb{N}$ large enough such that
	
	\begin{equation}\label{k-Gamma}
		kb_i\geq n_i~\textrm{for}~1\leq i\leq m~\textrm{and}~kb\geq N;
	\end{equation}
	%\item
	\begin{equation*}
		\frac{2(m+l)L}{kb+2(m+l)L}\cdot2<\frac{a}{4};
	\end{equation*}
	%  \item
	\begin{equation}\label{entropy}
		\frac{kb(\min\{h_{\mu},\alpha\}-\frac{2}{3}\eta)-2m\log l}{kb+2(m+l)L}>\min\{h_{\mu},\alpha\}-\eta.
	\end{equation}
	% \end{enumerate}

By \eqref{k-Gamma},   for each $1\leq i\leq m$,   we obtain a $(kb_i,  \varepsilon_i)$-separated set $\Gamma_{kb_i}^{\nu_i}\subseteq X_{kb_i,  \mathcal{B}(\nu_i,  \frac{a}{4})}\cap\Lambda$ such that $\frac{\log|\Gamma_{kb_i}^{\nu_i}|}{kb_i}>h_{\nu_i}-\frac{\eta}{3}$.   Moreover,   by the pigeonhole principle,   there exist $1\leq i_1,  i_2\leq l$ and $\widetilde{\Gamma}_{kb_i}^{\nu_i}\subset \Gamma_{kb_i}^{\nu_i}$ such that $\widetilde{\Gamma}_{kb_i}^{\nu_i}\subseteq B(x_{i_1}, \frac{\delta}{2}),$ $f^{kb_i}\widetilde{\Gamma}_{kb_i}^{\nu_i}\subseteq B(x_{i_2},\frac{\delta}{2})$ and
\begin{equation}\label{pigeonhole}
	|\widetilde{\Gamma}_{kb_i}^{\nu_i}|\geq \frac{|\Gamma_{kb_i}^{\nu_i}|}{l^2}.
\end{equation}

Now let $\Gamma=\widetilde{\Gamma}_{kb_1}^{\nu_1}\times\widetilde{\Gamma}_{kb_2}^{\nu_2}\times\cdots\times\widetilde{\Gamma}_{kb_m}^{\nu_m}$ whose element is $\underline{y}=(y_1,  \cdots,  y_m)$ with $y_i\in\widetilde{\Gamma}_{kb_i}^{\nu_i}$,   $1\leq i\leq m$.   Then we define the following pseudo-orbit:
$$\mathfrak{C}_{\underline{y}}=\mathfrak{C}_{xx_{1_1}}\langle y_1,  \cdots,  f^{kb_1-1}y_1\rangle\mathfrak{C}_{x_{1_2}x}\cdots \mathfrak{C}_{xx_{m_1}}\langle y_m,  \cdots,  f^{kb_m-1}y_m\rangle
\mathfrak{C}_{x_{m_2}x}  \mathfrak{C}_{xx_1}  \mathfrak{C}_{x_1x} \cdots  \mathfrak{C}_{xx_l}  \mathfrak{C}_{x_lx}.$$
It is clear that $\mathfrak{C}_{\underline{y}}$ is a $\delta$-pseudo-orbit.   Moreover,   one notes that we can freely concatenate such $\mathfrak{C}_{\underline{y}}$s to constitutes a $\delta$-pseudo-orbit.   So if we let $\Gamma^p=\Gamma\times\cdots\times\Gamma$ whose element is $\theta=(\theta_1,  \cdots,  \theta_p)$ with $\theta_j\in\Gamma$,   $1\leq j\leq p$,   we can define the following two $\delta$-pseudo-orbits: $$\widetilde{\mathfrak{C}}_{\theta}=\mathfrak{C}_{\theta_1}\mathfrak{C}_{\theta_2}\cdots\mathfrak{C}_{\theta_p}~\textrm{and}~
\mathfrak{C}_{\theta}=\widetilde{\mathfrak{C}}_{\theta}\widetilde{\mathfrak{C}}_{\theta}\cdots\widetilde{\mathfrak{C}}_{\theta}\cdots.  $$
Hence $\mathfrak{C}_{\theta}$ can be $\varepsilon$-shadows by some point in $X$.   Therefore,   if we let
$$n=kb+\sum_{i=1}^m(l_{0i_1}+l_{i_20})+\sum_{j=1}^l(l_{0j}+l_{j0}),  $$
we can define the following nonempty set
\begin{equation*}
	\Gamma_{pn}:=\{y\in X: y~\varepsilon-\textrm{shadows some pseudo orbit}~\mathfrak{C}_{\theta}~\textrm{with}~\theta\in\Gamma^p\}.	
\end{equation*}
By \eqref{k-Gamma},   $n\geq N$.   To see that $\Gamma_{pn}$ is a $(pn,  \frac{\varepsilon^*_1}{3})$-separated set,   consider any two points $y,  y'\in\Gamma_{pn}$.   Suppose $y$ $\varepsilon$-shadows $\mathfrak{C}_{\theta}$ and $y'$ $\varepsilon$-shadows $\mathfrak{C}_{\theta'}$ with $\theta_i\neq \theta'_i$ for some $1\leq i\leq p$.   Suppose that $\theta_i=(y_1,  \cdots,  y_m)$ and $\theta'_i=(y'_1,  \cdots,  y'_m)$.   Then there exists a $1\leq j\leq m$ such that $y_j\neq y'_j$.   This implies that $d_{pn}(y,  y')\geq \varepsilon_j-2\varepsilon\geq\varepsilon^*_1-2\varepsilon^*_2> \frac{\varepsilon^*_1}{3}.$   Now let us prove that $\Gamma_{pn}$ satisfies (a)(b)(c).

(a) For any $y\in\Gamma_{pn}$,   $y$ $\varepsilon$-shadows some $\mathfrak{C}_{\theta}=\langle w_u\rangle_{u=0}^{\infty}$ with $\theta=(\theta_1,  \cdots,  \theta_p)$ and $\theta_i=(\theta_{i,  1},  \cdots,  \theta_{i,  m})$,   $\theta_{i,  j}\in\widetilde{\Gamma}_{kb_j}^{\nu_j}$,   $1\leq i\leq p$,   $1\leq j\leq m$.   We have the following estimations:
\begin{enumerate}
	\item $\rho\left(\sum_{j=1}^m\frac{b_j}{b}\nu_j,  \mu\right)<\frac{a}{4}$.
	\item $\rho\left(\sum_{j=1}^m\frac{b_j}{b}\mathcal{E}_{kb_j}(\theta_{i,  j}),  \sum_{j=1}^m\frac{b_j}{b}\nu_j\right)<\frac{a}{4}$ for each $1\leq i\leq p$.
	\item $\rho\left(\frac{1}{n}\sum_{t=(i-1)n}^{in-1}\delta_{w_t},  \sum_{j=1}^m\frac{b_j}{b}\mathcal{E}_{kb_j}(\theta_{i,  j})\right)\leq \frac{n-kb}{kb+n-kb}\cdot2\leq \frac{2(m+l)L}{kb+2(m+l)L}\cdot2<\frac{a}{4}$ for each $1\leq i\leq p$.
	\item $\rho\left(\mathcal{E}_n(f^{(i-1)n}(y)),  \frac{1}{n}\sum_{t=(i-1)n}^{in-1}\delta_{w_t}\right)<\varepsilon\leq \frac{a}{4}$ for each $1\leq i\leq p$.
\end{enumerate}
Item 3 follows from Lemma \ref{lem:prohorov}.
So we have $\rho(\mathcal{E}_n(f^{(i-1)n}(y)),  \mu)<\frac{a}{4}+\frac{a}{4}+\frac{a}{4}+\frac{a}{4}=a$ for each $1\leq i\leq p$.   Therefore,
$$\rho(\mathcal{E}_{pn}(y),  \mu)=\rho\left(\frac{1}{p}\sum_{i=1}^p\mathcal{E}_n(f^{(i-1)n}(y)),  \mu\right)\leq \frac1p\sum_{i=1}^p\rho(\mathcal{E}_n(f^{(i-1)n}(y)),  \mu)<a,  $$
which implies that $y\in X_{pn,  \mathcal{B}(\mu,  a)}\subseteq X_{pn,  F_{\mu}}$.   Consequently,   $\Gamma_{pn}\subseteq X_{pn,  F_{\mu}}$.   The fact that $\Gamma_{pn}\subseteq B(x,  \varepsilon)\cap f^{-pn}B(x,  \varepsilon)$ is obvious by the construction.

(b) Note that $|\Gamma_{pn}|=\prod_{i=1}^m|\widetilde{\Gamma}_{kb_i}^{\nu_i}|^p\geq \prod_{i=1}^m\left(\frac{e^{kb_i(h_{\nu_i}-\frac{\eta}{3})}}{l^2}\right)^p$ by \eqref{pigeonhole}.   Then by the affinity of the metric entropy and \eqref{entropy},   we have
$$\frac{\log |\Gamma_{pn}|}{pn}\geq \frac{kb(\min\{h_{\mu},\alpha\}-\frac{2}{3}\eta)-2m\log l}{n}\geq \frac{kb(\min\{h_{\mu},\alpha\}-\frac{2}{3}\eta)-2m\log l}{kb+2(m+l)L}>\min\{h_{\mu},\alpha\}-\eta.  $$

(c) For any $y\in\Gamma_{pn}$,   $y$ $\varepsilon$-shadows some $\mathfrak{C}_{\theta}\subseteq \Lambda$ with $\theta\in\Gamma^p$.   Then $\{f^i(y)\}_{i=0}^{pn-1}\subseteq B(\Lambda,  \varepsilon)$.   Meanwhile,   by the construction of $\mathfrak{C}_{\theta}$,   for any $x_i$,   $1\leq i\leq l$,   there exists a point $z\in\{f^i(y)\}_{i=0}^{pn-1}$ which is $\varepsilon$ close to $x_i$.   However,   $\{x_i\}_{i=1}^l$ is $\frac{\delta}{2}$-dense in $\Lambda$.   So $\{f^i(y)\}_{i=0}^{pn-1}$ is $(\frac{\delta}{2}+\varepsilon)$-dense in $\Lambda$.   Since $\frac{\delta}{2}+\varepsilon<2\varepsilon$,   we have $\Lambda\subseteq B(\{f^i(y)\}_{i=0}^{pn-1},  2\varepsilon)$.
\end{proof}

\begin{Cor}\label{Maincor-mu-gamma-uniform} Suppose $(X,  f)$ satisfies shadowing property and uniform separation property.
Let $\Lambda\subseteq X$ be compact $f$-invariant and internally chain transitive.
Then for any $\eta>0$,   there exists an $\varepsilon^*_1=\varepsilon^*_1(\eta)>0$ such that for any $\mu\in M(f,  \Lambda)$ and its neighborhood $F_{\mu}$, there exists an $0<\varepsilon^*_2=\varepsilon^*_2(\eta,\mu,F_\mu)<\varepsilon^*_1$ such that  for any $x\in\Lambda$,   for any $0<\varepsilon\leq\varepsilon^*_2$,   for any $N\in\mathbb{N}$,   there exist an $n=n(\eta,\mu,F_\mu,  \varepsilon,x)\geq N$ such that for any $p\in\mathbb{N}$,   there exists an $(pn,  \frac{\varepsilon^*_1}{3})$-separated set $\Gamma_{pn}$ so that
\begin{description}
	\item[(1)] $\Gamma_{pn}\subseteq X_{pn,  F_{\mu}}\cap B(x,  \varepsilon)\cap f^{-pn}B(x,  \varepsilon)$;
	\item[(2)] $\frac{\log |\Gamma_{pn}|}{pn}>h_{\mu}-\eta$;
	\item[(3)] $d_H(\{f^i(y)\}_{i=0}^{pn-1},  \Lambda)<2\varepsilon$ for any $y\in\Gamma_{pn}.$
\end{description}	
\end{Cor}
\begin{proof}
Since $(X,  f)$ has uniform separation property,  it has finite topological entropy and $f|_\Lambda$ has uniform separation property.   Therefore, we can fix $\alpha=h_{top}(f)$ in the statement of Lemma \ref{Mainlemma-prop-mu-gamma-n} and 
the $\varepsilon^*_1$ in the proof of Lemma \ref{Mainlemma-prop-mu-gamma-n} can be chosen independent of $\mu$ and $F_{\mu}$ replacing using Lemma \ref{useful-Proposition} by  using uniform separation property of $f|_\Lambda$.
\end{proof}

For $k\in\mathbb{N}$,   denote $P_k(f):=\{x\in X:f^k(x)=x\}$.
\begin{Cor}\label{Maincor-mu-gamma-n-expansive} 
	Suppose that $(X,  f)$ is topologically expanding (resp. topologically Anosov).
	Let $\Lambda\subseteq X$ be compact $f$-invariant and internally chain transitive.
	Then for any $\eta>0$,   there exists an $\varepsilon^*_1=\varepsilon^*_1(\eta)>0$ such that for any $\mu\in M(f,  \Lambda)$ and its neighborhood $F_{\mu}$, there exists an $0<\varepsilon^*_2=\varepsilon^*_2(\eta,\mu,F_\mu)<\varepsilon^*_1$ such that  for any $x\in\Lambda$,   for any $0<\varepsilon\leq\varepsilon^*_2$,   for any $N\in\mathbb{N}$,   there exist an $n=n(\eta,\mu,F_\mu,  \varepsilon,x)\geq N$ such that for any $p\in\mathbb{N}$,   there exists an $(pn,  \frac{\varepsilon^*_1}{3})$-separated set $\Gamma_{pn}$ so that
	\begin{description}
		\item[(1)]\label{lem-B-aACor} $\Gamma_{pn}\subseteq X_{pn,  F_{\mu}}\cap B(x,  \varepsilon)\cap P_{pn}(f)$;
		\item[(2)]\label{lem-B-bACor} $\frac{\log |\Gamma_{pn}|}{pn}>h_{\mu}-\eta$;
		\item[(3)]\label{lem-C-cAcor} $d_H(\{f^i(y)\}_{i=0}^{pn-1},  \Lambda)<2\varepsilon$ for any $y\in\Gamma_{pn}.$
	\end{description}	
\end{Cor}
\begin{proof}
	Since $(X,  f)$ is positively expansive (resp. expansive),   $f|_\Lambda$ is positively expansive (resp. expansive) so that by Proposition \ref{Prop-PS2007}, $f|_\Lambda$ has the uniform separation property.   Therefore,
	the $\varepsilon^*_1$  in the proof of Lemma \ref{Mainlemma-prop-mu-gamma-n} can be chosen independent of $\mu$ and $F_{\mu}$ replacing using Lemma \ref{useful-Proposition} by  using uniform separation of $f|_\Lambda$.   
	
	Let $c>0$ be the expansive constant. 
	Since $(X,  f)$ is topologically expanding (resp. topologically Anosov),  then modify the proof of Lemma \ref{Mainlemma-prop-mu-gamma-n} by letting $\varepsilon^*_2=\min\{\frac{\varepsilon^*_1}{4},  \frac{a}{4},\frac{c}{2}\}$  and $$\mathfrak{C}_{\theta}=\widetilde{\mathfrak{C}}_{\theta}\cdots\widetilde{\mathfrak{C}}_{\theta}\cdots.$$ $$(\text{resp. } \mathfrak{C}_{\theta}=\cdots\widetilde{\mathfrak{C}}_{\theta}\cdots\widetilde{\mathfrak{C}}_{\theta}\widetilde{\mathfrak{C}}_{\theta}\cdots\widetilde{\mathfrak{C}}_{\theta}\cdots.)$$   From the construction of the $\delta$-pseudo-orbit $\mathrm{\mathfrak{C}_{\theta}}$,   one sees that for any $w\in\Gamma_{pn}$,   $d(f^{pn+t}(w),  f^{t}(w))<\varepsilon+\varepsilon=2\varepsilon\leq c$ for any $t\in\mathbb{Z}^+$ (resp. $t\in\mathbb{Z}$).   Therefore,   $f^{pn}(w)=w$,   which implies that $\Gamma_{pn}\subseteq P_{pn}(f)$.
\end{proof}

Now, we give the proof of Theorem \ref{G-K-star}.
\subsection{Proof of Theorem \ref{G-K-star}}

We need to show that for any internally chain transitive closed invariant subset $\Lambda\subseteq X$,   any nonempty open set $U\subseteq X$ and any nonempty compact connected subset $K\subseteq M(f,  \Lambda)$,   one has
$h_{top}(f,  G_K^{\Lambda}\cap U)=\inf\{h_\mu (f): \mu\in K\}. $ 
Since $G_K^{\Lambda}\subseteq G_K$ by definition and $h_{top}(f,G_K)\leq\inf\{h_\mu (f):  \mu\in K\}$ by Lemma \ref{lem-Bowen},   one has $h_{top}(f,G_K^{\Lambda}\cap U)\leq \inf\{h_\mu (f): \mu\in K\}.  $ 
So it remains to show that
\begin{equation}\label{eq-G-K-U-geq-inf}
	h_{top}(f,G_K^{\Lambda}\cap U)\geq\inf\{h_\mu (f): \mu\in K\}.
\end{equation}
To begin with,   we recall the following conclusion which was used in the proof of \cite[Proposition 21.  14]{DGS}.   A proof can be be found,   for example,   in \cite{DTY}.
\begin{Lem}
	For any nonempty compact connected set $K\subseteq \mathcal {M}_{f}(X)$,   there exists a sequence of open balls $B_{n}$ in $\mathcal {M}_{f}(X)$ with radius $\frac{\zeta_{n}}{2}$ such that the following holds:
	\begin{description}
		\item[(1)] $B_{n}\bigcap B_{n+1}\bigcap K\neq\emptyset$;
		\item[(2)] ${\bigcap}_{N=1}^{\infty}{\bigcup}_{n\geq N}B_{n}=K$;
		\item[(3)] $\lim_{n\rightarrow \infty}\zeta_{n}=0$.
	\end{description}
\end{Lem}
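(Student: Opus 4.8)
The plan is to forget the dynamics entirely: the statement uses only that $(\mathcal{M}_{f}(X),\rho)$ is a metric space and that $K$ is a compact \emph{connected} subset of it. The one classical fact I will invoke is that a compact connected metric space is \emph{uniformly chain connected}: for every $\varepsilon>0$ and every $a,b\in K$ there is a finite sequence $a=x_{0},x_{1},\dots,x_{r}=b$ with all $x_{i}\in K$ and $\rho(x_{i},x_{i+1})<\varepsilon$. (Reason, recorded as a preliminary remark: fix $\varepsilon$ and declare $a\sim b$ when such a chain in $K$ exists; this is an equivalence relation whose classes are open, hence also closed, so connectedness of $K$ forces a single class.)

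Next I would build, for each integer $k\ge1$, a finite ``block'' $\mathcal{C}_{k}=(q_{1}^{k},\dots,q_{N_{k}}^{k})$ of points of $K$ with two features: (i) $\rho(q_{j}^{k},q_{j+1}^{k})<1/k$ for consecutive indices, and (ii) $\{q_{1}^{k},\dots,q_{N_{k}}^{k}\}$ is a $1/k$-net of $K$. To produce it, cover the compact set $K$ by finitely many balls $B(\mu_{1}^{k},1/k),\dots,B(\mu_{m_{k}}^{k},1/k)$ with centres $\mu_{i}^{k}\in K$, then concatenate $1/k$-chains in $K$ running $\mu_{1}^{k}\to\mu_{2}^{k}\to\cdots\to\mu_{m_{k}}^{k}$ (these exist by the preliminary remark). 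The concatenation contains every $\mu_{i}^{k}$, hence is a $1/k$-net, and consecutive points are $1/k$-close. To glue successive blocks I insist, for $k\ge2$, that the first centre of the $1/k$-net be the last point $q_{N_{k-1}}^{k-1}$ of the previous block; this is legitimate since that point lies in $K$ and any extra point of $K$ can be thrown into a net. Thus $q_{1}^{k}=q_{N_{k-1}}^{k-1}$.

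Finally I would assemble the sequence $(B_{n})_{n\ge1}$ by listing, in order, the balls $B(q_{j}^{1},1)$, then $B(q_{j}^{2},1/2)$, then $B(q_{j}^{3},1/3)$, and so on, attaching to $B_{n}$ the radius $\zeta_{n}=1/k$ when $B_{n}$ comes from $\mathcal{C}_{k}$. Property (c) is immediate: the radii are non-increasing and run through every $1/k$, so $\zeta_{n}\to0$. For (a): inside one block, $q_{j}^{k}\in B(q_{j}^{k},1/k)\cap B(q_{j+1}^{k},1/k)\cap K$ because $\rho(q_{j}^{k},q_{j+1}^{k})<1/k$ and $q_{j}^{k}\in K$; across the junction between $\mathcal{C}_{k}$ and $\mathcal{C}_{k+1}$ the two balls share the centre $q_{N_{k}}^{k}=q_{1}^{k+1}\in K$, so the triple intersection again contains it. For (b): a measure lying in $B_{n}$ for infinitely many $n$ satisfies $\rho(\cdot,q_{n})<\zeta_{n}$ along a subsequence with $q_{n}\in K$ and $\zeta_{n}\to0$, hence lies in $\overline{K}=K$; conversely each $\mu\in K$ lies, for every $k$, in some ball of block $\mathcal{C}_{k}$ by the net property (ii), hence in $B_{n}$ for infinitely many $n$. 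Therefore $\bigcap_{N\ge1}\bigcup_{n\ge N}B_{n}=K$.

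I do not expect a genuine obstacle; the only point needing care is the bookkeeping that forces consecutive blocks to overlap inside $K$ (handled by making the first centre of block $k+1$ equal to the last point of block $k$) together with checking that this gluing does not disturb the monotone decrease of the radii. Everything else is a routine compactness-and-connectedness argument, which is why the statement is quoted rather than proved in detail in the cited sources.
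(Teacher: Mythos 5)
Your proof is correct, and it is essentially the standard argument: the paper itself does not prove this lemma but only cites it (from \cite{DGS} and \cite{DTY}), and the proof found there is exactly your combination of $\varepsilon$-chain connectivity of a compact connected metric space with finite $1/k$-nets, concatenated into blocks whose consecutive centres are $1/k$-close and whose junctions share a common point of $K$. All three properties (a)--(c) are verified correctly, so nothing further is needed.
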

This allows us to pick an $\alpha_n\in B_{n}\bigcap B_{n+1}\bigcap K$ for each $n$.   Then
\begin{equation*}
	\rho(\alpha_n,  \alpha_{n+1})<\zeta_{n+1}~\text{for each}~n.
\end{equation*}
Moreover,   $\{\alpha_n\}_{n\geq1}$ are dense in $K$ due to property (2) and (3) there.

Fix an $x\in \Lambda$ and an arbitrary $\eta>0.$   Let $F_k=\mathcal{B}(\alpha_k,  \zeta_k)$ for each $k\in\mathbb{N}$.   Since $(X,  f)$ satisfies $s$-limit-shadowing property and uniform separation property,   by  Corollary  \ref{Maincor-mu-gamma-uniform}    there is an $\varepsilon^*=\varepsilon^*(\eta)>0$ and $0<\varepsilon_k^*<\varepsilon^*$ such that for any $0<\varepsilon_k\leq\varepsilon_k^*$ and any $N\in\mathbb{N}$,   there is an $n_k\geq N$ and an $(n_k,  \frac{\varepsilon^*}{3})$-separated set $\Gamma^{\alpha_k}_{n_k}$ so that
\begin{description}
	\item[(a)] $\Gamma^{\alpha_k}_{n_k}\subseteq X_{n_k,  F_k}\cap B(x,  \varepsilon_k)\cap f^{-n_k}B(x,  \varepsilon_k)$;
	\item[(b)] $\frac{\log |\Gamma^{\alpha_k}_{n_k}|}{n_k}>h_{\alpha_k}-\eta$;
	\item[(c)] $d_H(\{f^iy\}_{i=0}^{n_k-1},  \Lambda)<2\varepsilon_k$ for any $y\in\Gamma^{\alpha_k}_{n_k}.$ 
\end{description}

\textbf{Case 1:}
If $U\cap \Lambda \neq \emptyset,$ we take $x_0\in U\cap \Lambda.$ Let $d=\operatorname{dist}(x_0,  X\setminus U)$ and set $\varepsilon:=\min\{\frac{\varepsilon^*}{9}, \frac{d}{2}\}.$ By the $s$-limit-shadowing property,   there is a $0<\delta<\varepsilon$ such that any $\delta$-pseudo-orbit can be $\varepsilon$-shadowed and any $\delta$-limit-pseudo-orbit can be $\varepsilon$-limit-shadowed. Since $\Lambda$ is internally chain transitive,  there exists a $\frac{\delta}{2}$-chain $\mathfrak{C}_{x_0x}$ in $\Lambda$ with length $n_{0}'$ connecting $x_0$ and $x.$

\textbf{Case 2:}
If $U\cap \Lambda = \emptyset,$ we take $x_0\in U.$ Let $d=\operatorname{dist}(x_0,  X\setminus U)$ and set $\varepsilon:=\min\{\frac{\varepsilon^*}{9}, \frac{d}{2}\}.$ By the $s$-limit-shadowing property,   there is a $0<\delta<\varepsilon$ such that any $\delta$-pseudo-orbit can be $\varepsilon$-shadowed and any $\delta$-limit-pseudo-orbit can be $\varepsilon$-limit-shadowed. Since $(X,  f)$ is topologically transitive, then $X$ is internally chain transitive, and thus there exist a $\frac{\delta}{2}$-chain $\mathfrak{C}_{x_0x}$ in $X$ with length $n_{0}'$ connecting $x_0$ and $x.$

We set $\varepsilon_k=\min\{2^{-k}\delta,\varepsilon_k^*\}$ for each $k\in\mathbb{N}$. Then we have $\Gamma^{\alpha_k}_{n_k}$ satisfying property (a), (b) and (c).
Now choose a strictly increasing positive integer sequence $N_k$ such that
\begin{equation}\label{equa-AA}
	\lim_{k\rightarrow\infty}\frac{n_k}{\sum_{j=1}^{k-1}N_jn_j+n_0'}=0, 
\end{equation}
\begin{equation}\label{equa-AB}
	\lim_{k\rightarrow\infty}\frac{N_kn_k}{\sum_{j=1}^{k-1}N_jn_j+n_0'}=+\infty.
\end{equation}
Moreover,   we define the stretched sequences $\{n_{j}'\}$,   $\{\varepsilon_{j}'\}$ and $\{\Gamma_{j}'\}$,   by setting for
$$j=N_{1}+\cdots N_{k-1}+q~\textrm{with}~1\leq q\leq N_{k},  \,\,\,\, n_{j}':=n_{k},\,\,  \, \varepsilon_{j}':=\varepsilon_{k},\,\,  \,  \Gamma_j':=\Gamma_{n_k}^{\alpha_k}.  $$

Let $\mathcal{W}:=\prod_{j=1}^{+\infty}\Gamma_{j}'$
whose element is $w=(w_1w_2\cdots)$ with $w_j\in\Gamma_j'$,   $j\geq1$.
For each $w_j\in\Gamma_j'$,   denote
$$\mathfrak{C}_{w_j}=\langle w_j,  \cdots,  f^{n_j'-1}w_j\rangle.$$
Due to the property (a) of $\Gamma_{n_k}^{\alpha_k}$ and the fact that $\varepsilon_{i}+\varepsilon_j\leq\delta,  i,  j\in\mathbb{N}$,   we can freely concatenate $\mathfrak{C}_{\omega_i}$s to constitute a $\delta$-pseudo-orbit.   In particular,  for each $w\in\mathcal{W}$,   there corresponds a $\delta$-pseudo-orbit
$$O_w:=\mathfrak{C}_{x_0x}\mathfrak{C}_{w_1}\mathfrak{C}_{w_2}\cdots\mathfrak{C}_{w_k}\cdots\cdots.$$
The construction of $O_w$ and the fact that $\lim_{i\to\infty}\varepsilon_i=0$ shows that $O_w$ is a $\delta$-limit-pseudo-orbit. We denote $O_w=\{y_n\}_{n=0}^{+\infty},$ then
the following defined set
$$G(w):=\{y\in X: d(f^n(y),  y_n)<\varepsilon \text{ for any }n\in\mathbb{Z}^+ \text{ and } \lim_{n\to\infty}d(f^n(y),  y_n)=0\}$$
is nonempty.

Let
\begin{equation*}
	G=\bigcup_{w\in\mathcal{W}}G(w).
\end{equation*}
Therefore,   for any $y\in G$,   there is a $w=w(y)\in\mathcal{W}$ such that $y\in G(w)$. Moreover,   we have
\begin{Lem}\label{lem-G-disjoint}
	$G=\bigsqcup_{w\in\mathcal{W}}G(w)$.   Here $\bigsqcup$ denotes the disjoint union.
\end{Lem}
\begin{proof}
	For any $u,  v\in\mathcal{W},  u\neq v,$ there exists an $i\in\mathbb{N}$ such that $u_i\neq v_i$.
	This implies that $$d_{n_i'}(u_i,  v_i)\geq \frac{\varepsilon^*}{3}\geq 3\varepsilon.$$
	For any $y_u\in G(u),$ $y_v\in G(v)$
	we have $d_{n_i'}(f^{M_{i-1}}(y_u),  u_i)<\varepsilon$ and $d_{n_i'}(f^{M_{i-1}}(y_v),  v_i)<\varepsilon$ where $M_{i-1}:=\sum_{l=0}^{i-1}n_{l}',$   which implies that
	$d_{M_i'}(y_u,  y_v)>\varepsilon.$ So $G(u)\cap G(v)=\emptyset.$
\end{proof}

Now we show that
\begin{equation}\label{G-subset-G-K-star}
	G\subseteq G_K^{\Lambda}\cap U.
\end{equation}
Indeed,   $G\subseteq U$ is clear by construction.   To show that for any $y\in G,  $ $\omega_f(y)=\Lambda$,   suppose $y$ $\varepsilon$-limit-shadows some $O_w=\{y_i\}_{i=0}^{+\infty}$ with $w\in\mathcal{W}$.   Define the $\omega$-limit set of the pseudo-orbit $\{y_i\}_{i\in\mathbb{N}}$ by
$$\omega(\{y_i\}_{i\in\mathbb{N}})=\bigcap_{n\in\mathbb{N}}\overline{\bigcup_{k\geq n}\{y_k\}}.$$
Due to property (c) of $\Gamma_{n_k}^{\alpha_k}$,   one sees that $\omega(\{y_i\}_{i=0}^{+\infty})=\Lambda$. 
The following lemma can be checked from the definition of limit points.
\begin{Lem}\label{lem-omega-pseudo-equal-real}
	If a pseudo-orbit  $\{y_i\}_{i\in\mathbb{N}}$ is limit-shadowed by $y\in X$,   then $\omega(\{y_i\}_{i\in\mathbb{N}})=\omega_f(y)$, where $\omega(\{y_i\}_{i\in\mathbb{N}})$ is the set of limit points of $\{y_i\}_{i\in\mathbb{N}}$.
\end{Lem}
Lemma \ref{lem-omega-pseudo-equal-real} ensures that $\omega_f(y)=\omega(\{y_i\}_{i=0}^{+\infty})=\Lambda$.
Then we are left to prove

\begin{Lem}
	For any $y\in G$,   $V_f(y)=K$.
\end{Lem}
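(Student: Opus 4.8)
The plan is to compare $\E_n(x)$ with the empirical measures of the limit‑shadowing pseudo‑orbit $O_w=\{y_i\}_{i\ge 0}$ that $x$ tracks, and then to read off $V_f(x)$ from the block structure of $O_w$; the target is $V_f(x)=K$. First I would organize the orbit segments $\C_{w_j}$ according to the stretched index into \emph{super-blocks}: the $k$-th super-block is the concatenation of the $N_k$ sub-blocks built from points of $\Gamma_{n_k}^{\alpha_k}$, so it has length $N_kn_k$ and each of its sub-blocks $\langle w,\dots,f^{n_k}w\rangle$ satisfies $\E_{n_k}(w)\in F_k=\B(\alpha_k,\zeta_k)$. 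Writing $L_k=\sum_{l\le k}N_ln_l$, so that the $k$-th super-block occupies the time interval $[L_{k-1},L_k)$ in $O_w$, the two growth conditions \eqref{1} and \eqref{2} say exactly that $n_k/L_{k-1}\to 0$ and $L_{k-1}/L_k\to 0$ (hence also $L_{k-2}/L_{k-1}\to 0$); these two negligibility facts drive everything.

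Step 1: reduce to the pseudo-orbit. Since $x$ $\eps$-limit-shadows $O_w$, for every $\eps'>0$ all but finitely many $i$ satisfy $d(f^ix,y_i)<\eps'$, so by \eqref{metric-on-X} one gets $\rho(\E_n(x),\E_n(y))\le\frac1n\sum_{i<n}d(f^ix,y_i)\to 0$. Thus $V_f(x)$ equals the set of weak$^*$ limit points of $\{\E_n(y)\}$, and it suffices to identify this set with $K$.

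Step 2: $K\subseteq V_f(x)$. Given $\beta\in K$, using properties (b) and (c) of Lemma \ref{Lemma-important} I would pick indices $k_m\to\infty$ with $\alpha_{k_m}\to\beta$, and then evaluate $\E_n(y)$ at $n=L_{k_m}$, the right endpoint of super-block $k_m$. There $\E_{L_{k_m}}(y)$ is a convex combination of the completed super-block $k_m$ average, which lies in $\B(\alpha_{k_m},\zeta_{k_m})$, and the contribution of the earlier super-blocks, whose total weight $L_{k_m-1}/L_{k_m}$ tends to $0$ by \eqref{2}; hence $\E_{L_{k_m}}(y)\to\beta$, so $\beta\in V_f(x)$. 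Closedness of $V_f(x)$ then gives $K\subseteq V_f(x)$.

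Step 3: $V_f(x)\subseteq K$. I would show $\rho(\E_n(y),K)\to 0$ as $n\to\infty$. For $n$ in super-block $k=k(n)$ (so $k\to\infty$), grouping Dirac masses sub-block by sub-block expresses $\E_n(y)$ as a convex combination of the full super-block averages $\nu_l\in\B(\alpha_l,\zeta_l)$ for $l\le k-1$, a partial average $\nu_k\in\B(\alpha_k,\zeta_k)$ coming from the current super-block, and at most one leftover partial sub-block of length $<n_k$. By \eqref{1} and \eqref{2} the combined weight of super-blocks $1,\dots,k-2$ and of the leftover sub-block is at most $(L_{k-2}+n_k)/L_{k-1}\to 0$, so $\E_n(y)$ is within $o(1)$ of a two-term combination $t\alpha_{k-1}+(1-t)\alpha_k$; since $d(\alpha_{k-1},\alpha_k)<\zeta_k\to 0$ by \eqref{alpha-n-n+1}, this combination lies within $\zeta_k$ of $\alpha_k\in K$, and the case $r=0$ is the same with $\nu_k$ omitted. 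Combining Steps 2 and 3 yields $V_f(x)=K$. The point that needs care is precisely this last estimate: $K$ is only assumed compact and connected, not convex, and what rescues the argument is that the stretching forces $\E_n(y)$ to be dominated by at most two \emph{consecutive} measures $\alpha_{k-1},\alpha_k$, which by \eqref{alpha-n-n+1} are $\zeta_k$-close; the remaining work is the bookkeeping of the weights through \eqref{1} and \eqref{2}.
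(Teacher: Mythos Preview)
Your proof is correct and follows essentially the same approach as the paper: reduce to the pseudo-orbit via limit-shadowing, then exploit the super-block structure together with the growth conditions \eqref{1} and \eqref{2}. The only organizational difference is that the paper compresses your Steps~2 and~3 into a single estimate: defining the stretched sequence $\alpha'_m:=\alpha_k$ for $S_{k-1}<m\le S_k$ (your $L_k$), it shows $\rho(\beta_n,\alpha'_n)\to 0$ directly, which gives both inclusions at once since $\{\alpha'_m\}$ has the same limit-point set as $\{\alpha_k\}$, namely $K$; your separate treatment of $K\subseteq V_f(x)$ and $V_f(x)\subseteq K$ amounts to the same computation.
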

\begin{proof}
	Since $\{\alpha_k\}_{k=1}^{\infty}$ is dense in $K$,   it suffices to prove that $\{\mathcal{E}_n(y)\}_{n=1}^{\infty}$ has the same limit-points as $\{\alpha_k\}_{k=1}^{\infty}$.
	To this end,   set $S_k:=\sum_{l=1}^kn_lN_l+n_0'$ with $S_0:=n_0'$ and define the stretched sequence $\{\alpha_m'\}_{m=S_0}^{\infty}$ by
	$$\alpha_m':=\alpha_k~\text{if}~S_{k-1}< m\leq S_k.  $$
	The sequence $\{\alpha_m'\}_{m=S_0}^{\infty}$ has the same limit-point set as the sequences $\{\alpha_k\}_{k=1}^{\infty}$.
	
	Now suppose $y$ $\varepsilon$-limit-shadows some  $O_w=\{y_n\}_{n=0}^{+\infty}$ with $w\in\mathcal{W}$,   then $\lim_{n\to\infty} d(f^n(y),  y_n)=0$.   Let $\beta_n=\frac{1}{n}\sum_{i=0}^{n-1}\delta_{y_i}.  $ Using Lemma \ref{lem:prohorov},   we have $\lim_{n\to\infty}\rho(\mathcal{E}_n(y),  \beta_n)=0$.   Hence,   $\mathcal{E}_n(y)$ shares the same limit-points as the sequence $\beta_n$.   So we are left to prove that
	$\lim_{n\to\infty}\rho(\alpha_n',  \beta_n)=0.  $ 
	In fact,   by Lemma \ref{lem:prohorov} and \eqref{equa-AA},   we only need to prove that
	$\lim_{k\to\infty}\rho(\alpha_{M_k}',  \beta_{M_k})=0.$
	Indeed,   Lemma \ref{lem:prohorov} and \eqref{equa-AB} indicates that
	\begin{equation}\label{beta-S-n-alpha-n}
		\lim_{n\to\infty}\rho(\beta_{S_n},  \alpha_n)=0.
	\end{equation}
	Now for each $k\geq N_1$,   there are $j\in\mathbb{N}$ and $1\leq r\leq N_{j+1}$ such that $M_k=S_j+rn_{j+1}$.   Then
	$$\alpha_{M_k}'=\alpha_{j+1}~\textrm{and}~\beta_{M_k}=\frac{S_j\beta_{S_j}+\sum_{p=1}^{r}n_{j+1}\mathcal{E}_{n_{j+1}}(w_{N_1+\cdots+N_k+p})}{M_k}.  $$
	However,   since $w_t\in\Gamma_{t}'$ for each $t\in\mathbb{N}$,
	$$\rho(\mathcal{E}_{n_{j+1}}(w_{N_1+\cdots+N_k+p}),  \alpha_{j+1})<\zeta_{j+1}~\textrm{for}~1\leq p\leq r$$
	Therefore,   one has
	\begin{eqnarray*}
		% \nonumber to remove numbering (before each equation)
		\rho(\beta_{M_k},  \alpha_{j+1}) &\leq& \frac{S_j}{M_k}[\rho(\beta_{S_j},  \alpha_j)+\rho(\alpha_j,  \alpha_{j+1})]+\frac{n_{j+1}}{M_k}\sum_{p=1}^r\rho(\mathcal{E}_{n_{j+1}}(w_{N_1+\cdots+N_k+p}),  \alpha_{j+1}) \\
		&\leq&  \rho(\beta_{S_j},  \alpha_j)+\zeta_{j+1}+\zeta_{j+1}.
	\end{eqnarray*}
	The proof will be completed if one notices \eqref{beta-S-n-alpha-n} and that $\lim_{i\to\infty}\zeta_i=0.  $
\end{proof}

Let $\widetilde{h}=\inf\{h_\mu:\mu\in K\}$ and $h=\widetilde{h}-2\eta$.   We shall prove that
\begin{equation}\label{entropy-of-G}
	h_{top}(f,  G)\geq h.
\end{equation}
Recall the definition in \eqref{definition-of-topological-entropy}.   Since $C(G;s,  \sigma,  f)$ is non-decreasing as $\sigma$ decreases,   it is enough to prove that there exists $\widetilde{\sigma}>0$ such that
\begin{equation}\label{estimation-of-C}
	C(G;h,  \widetilde{\sigma},  f)\geq 1.
\end{equation}
In fact,   we will prove that (\ref{estimation-of-C}) holds for $\widetilde{\sigma}=\frac{\varepsilon}{2}$. 
Due to \eqref{equa-AA},   there exists a $q\in\mathbb{N}$ such that
\begin{equation}\label{selection-of-q}
	\frac{M_r-n_0'}{M_{r+1}}\geq\frac{\widetilde{h}-2\eta}{\widetilde{h}-\eta}~\textrm{for any}~r\geq q.
\end{equation}
Now let $N=M_q$,   it suffices to prove that
\begin{equation*}\label{C-n-neq-N}
	C(G;h,  n,  \frac{\varepsilon}{2},  f)\geq1~\textrm{for any}~n\geq N.
\end{equation*}
Alternatively,   one should show that for any $\mathcal{C}\in\mathcal{G}_n(G,  \frac{\varepsilon}{2})$ with $n\geq N$,
\begin{equation*}
	\sum_{B_u(z,  \frac{\varepsilon}{2})\in\mathcal{C}}e^{-hu}\geq 1.
\end{equation*}
Moreover,   we can assume that if $B_u(z,  \frac{\varepsilon}{2})\in \mathcal{C}$ then $B_u(z,  \frac{\varepsilon}{2})\cap G\neq\emptyset$,   since otherwise we may remove this set from $\mathcal{C}$ and it still remains as cover of $G$.   In addition,   for each $\mathcal{C}\in\mathcal{G}_n(G,  \frac{\varepsilon}{2})$,   we define a cover $\mathcal {C}'$ in which each ball $B_{u}(z,  \frac{\varepsilon}{2})$ is replaced by $B_{M_{m}}(z,  \frac{\varepsilon}{2})$,   $M_{m}\leq u< M_{m+1}$,   $m\geq q$.   Then
\begin{equation}\label{contract-prolong}
	\sum_{B_{u}(z,  \frac{\varepsilon}{2})\in\mathcal {C}}e^{-hu}\geq\sum_{B_{M_{m}}(z,  \frac{\varepsilon}{2})\in\mathcal {C}'}e^{-hM_{m+1}}.
\end{equation}

Consider a specific $\mathcal{C}'$ and let $c$ be the largest number of $p$ for which there exists $B_{M_p}(z,  \varepsilon)\in\mathcal{\mathcal{C}'}$.
Define
\begin{equation*}
	\mathcal {V}_{c}:=\bigcup_{l=1}^{c}\mathcal {W}_{l},
\end{equation*}
where $\mathcal {W}_{l}:=\prod_{i=1}^{l}\Gamma_i'$ whose element is $\underline{w}=(w_1\cdots w_l)$ with $w_i\in\Gamma_i'$ for $1\leq i\leq l$.
For $1\leq j\leq k$,   we say $\underline{w}\in\mathcal{W}_j$ is a prefix of $\underline{v}\in\mathcal{W}_k$ if the first $j$ entries of $\underline{v}$ coincides with $\underline{w}$,   namely $w_i=v_i$ for $1\leq i\leq j$.
Note that each word of $\mathcal {W}_{m}$ is a prefix of exactly $|\mathcal {W}_{c}|/|\mathcal {W}_{m}|$ words of $\mathcal {W}_{c}$.
\begin{Lem}
	If $\mathcal {V}\subset\mathcal {V}_{c}$ contains a prefix of each word of $\mathcal {W}_{c}$,   then
	\begin{equation}\label{V-W-c-m}
		\sum_{m=1}^{c}|\mathcal {V}\cap\mathcal {W}_{m}|\frac{|\mathcal {W}_{c}|}{|\mathcal {W}_{m}|}\geq |\mathcal {W}_{c}|.
	\end{equation}
	Consequently,
	\begin{equation}\label{w}
		\sum_{m=1}^{c}|\mathcal {V}\cap\mathcal {W}_{m}|/|\mathcal {W}_{m}|\geq1.
	\end{equation}
\end{Lem}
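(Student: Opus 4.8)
The plan is to derive both inequalities from a single double-counting argument on the tree of words. Recall that for $m\le c$ the set $\mathcal{W}_m=\prod_{i=1}^m\Gamma_i'$ consists of words $\underline{w}=(w_1,\dots,w_m)$ with $w_i\in\Gamma_i'$, that $\underline{w}\in\mathcal{W}_j$ is a prefix of $\underline{v}\in\mathcal{W}_k$ (with $j\le k$) when $w_i=v_i$ for all $1\le i\le j$, and --- as already noted just before the statement --- that each $\underline{w}\in\mathcal{W}_m$ is a prefix of exactly $|\mathcal{W}_c|/|\mathcal{W}_m|=\prod_{i=m+1}^c|\Gamma_i'|$ words of $\mathcal{W}_c$, namely those obtained by appending to $\underline{w}$ an arbitrary element of $\prod_{i=m+1}^c\Gamma_i'$.

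First I would introduce the incidence set
\[
 P:=\{(\underline{w},\underline{v}):\ \underline{w}\in\mathcal{V},\ \underline{v}\in\mathcal{W}_c,\ \underline{w}\text{ is a prefix of }\underline{v}\}
\]
and count $|P|$ in two ways. Counting by the first coordinate: since $\mathcal{V}\subseteq\mathcal{V}_c=\bigcup_{m=1}^c\mathcal{W}_m$ and words of different lengths are distinct, $\mathcal{V}$ is the disjoint union of the sets $\mathcal{V}\cap\mathcal{W}_m$ for $1\le m\le c$; by the prefix count recalled above, each $\underline{w}\in\mathcal{V}\cap\mathcal{W}_m$ lies in exactly $|\mathcal{W}_c|/|\mathcal{W}_m|$ pairs of $P$, so $|P|=\sum_{m=1}^c|\mathcal{V}\cap\mathcal{W}_m|\,|\mathcal{W}_c|/|\mathcal{W}_m|$. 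Counting by the second coordinate: the hypothesis that $\mathcal{V}$ contains a prefix of every word of $\mathcal{W}_c$ says precisely that each $\underline{v}\in\mathcal{W}_c$ occurs in at least one pair of $P$, hence $|P|\ge|\mathcal{W}_c|$. Comparing the two expressions yields \eqref{V-W-c-m}, and dividing through by $|\mathcal{W}_c|$ yields \eqref{w}.

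I do not expect a genuine obstacle here: this is the Kraft-type inequality for a family of prefixes that covers all the maximal words, and the double count is exact on one side and a trivial inequality on the other. The only points worth a line of care are that the $\mathcal{W}_m$ are pairwise disjoint (immediate, since membership in $\mathcal{W}_m$ records the length $m$) and that a single $\underline{v}\in\mathcal{W}_c$ may have several prefixes in $\mathcal{V}$; the latter only strengthens the bound $|P|\ge|\mathcal{W}_c|$ and leaves untouched the exact identity $|P|=\sum_{m}|\mathcal{V}\cap\mathcal{W}_m|\,|\mathcal{W}_c|/|\mathcal{W}_m|$. In particular no finer structure of the alphabets $\Gamma_i'$ is used.
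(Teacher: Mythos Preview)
Your proof is correct and is essentially the same covering/double-counting argument as the paper's: the paper observes that each $\underline{w}\in\mathcal{W}_c$ has some prefix in $\mathcal{V}\cap\mathcal{W}_m$ and that each element of $\mathcal{W}_m$ is a prefix of exactly $|\mathcal{W}_c|/|\mathcal{W}_m|$ words in $\mathcal{W}_c$, which is precisely your two-way count of $|P|$. Your presentation via the explicit incidence set $P$ is a bit cleaner, but the content is identical.
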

\begin{proof}
	For any $\underline{w}\in\mathcal{W}_c$,   there is an $1\leq m\leq c$ such that $(w_1,  \cdots,  w_m)\in\mathcal{V}\cap\mathcal{W}_m$.   However,   for each $\underline{v}\in \mathcal{W}_m$,   the number of $\underline{w}\in\mathcal{W}_c$ such that $(w_1,  \cdots,  w_m)=\underline{v}$ does not exceed $|\mathcal {W}_{c}|/|\mathcal {W}_{m}|$.   This proves \eqref{V-W-c-m} and hence \eqref{w}.
\end{proof}

Each $z'\in B_{M_{m}}(z,  \frac{\varepsilon}{2})\cap G$ corresponds to a point in $\mathcal {W}_{m}$. The proof of Lemma \ref{lem-G-disjoint} implies that this point is uniquely defined.  So $\mathcal{C}'$ corresponds uniquely to a subset $\mathcal {V}\subset\mathcal{V}_c.$ Note that $\mathcal{C}'$ is a cover, then $\mathcal{V}$ contains a prefix of each word of $\mathcal {W}_{c}$. 
This combining with (\ref{w}) gives that
\begin{equation*}
	\sum_{B_{M_{m}}(z,  \frac{\varepsilon}{2})\in\mathcal {C}'}\frac{1}{|\mathcal{W}_{m}|}\geq1.
\end{equation*}
Since
$$|\mathcal{W}_m|=\prod_{j=1}^m|\Gamma_j'|\geq e^{\sum_{j=1}^mn_j'(\widetilde{h}-\eta)}= e^{(M_m-n_0')(\widetilde{h}-\eta)}$$
by the property (b) of $\Gamma_{n_k}^{\alpha_k},$
one has
$$\sum_{B_{M_m}(z,  \frac{\varepsilon}{2})\in\mathcal{C}'}e^{-(M_m-n_0')(\widetilde{h}-\eta)}\geq1.  $$
Moreover,   since $m\geq q$,   \eqref{selection-of-q} produces that
$$\sum_{B_{M_m}(z,  \frac{\varepsilon}{2})\in\mathcal{C}'}e^{-M_{m+1}(\widetilde{h}-2\eta)}\geq1.  $$
Note that $h=\widetilde{h}-2\eta$.   Using \eqref{contract-prolong},   one gets that
$$\sum_{B_{u}(z,  \frac{\varepsilon}{2})\in\mathcal {C}}e^{-hu}\geq1\Rightarrow C(G;h,  n,  \frac{\varepsilon}{2},  f)\geq1.  $$
Finally,   \eqref{G-subset-G-K-star},   \eqref{entropy-of-G} and the arbitrariness of $\eta$ lead to \eqref{eq-G-K-U-geq-inf}. Now we complete the proof of Theorem \ref{G-K-star}. \qed

By \textbf{Case 1} of the proof of Theorem \ref{G-K-star}, we have the following without topologically transitive assumption. In fact, in this case, it is enough to choose $x_0\in\Lambda$ directly and set $\varepsilon=\frac{\varepsilon^*}{9}$.
\begin{Prop}
	Suppose $(X,  f)$ satisfies s-limit-shadowing property and uniform separation property. 
	Then $f$ is  star-saturated.
\end{Prop}

\section{Topological entropy of \texorpdfstring{$\mathscr{C}(\alpha_1\alpha_2\cdots\alpha_6)$}{C(alpha1alpha2...alpha6)}: proof of Theorem \ref{Theorem B}}\label{section 7}
We divide the proof of Theorem \ref{Theorem B} into four parts: \textbf{Case} $(1\alpha_2\alpha_3\alpha_411)$; \textbf{Case} $(1\alpha_2\alpha_3\alpha_4\alpha_50)$; \textbf{Case} $(0\alpha_2\alpha_3\alpha_4\alpha_50)$ and \textbf{Case} $(011111)$.
\subsection{Proof of Theorem \ref{Theorem B}: \textbf{Case} $(1\alpha_2\alpha_3\alpha_411)$} Firstly, we prove some basic properties for $C_\Lambda^*$. 
\begin{Lem}\cite[Lemma 5.1]{KOR2016}\label{lemma-full-support}
	Suppose that $(X,  f)$ is a topological dynamical system, then there exists an invariant measure $\mu$ such that $S_\mu=C^*_X.$
\end{Lem}
\begin{Lem}\label{Lem-IrregularMultianalysis-11111}
	Suppose that $(X,  f)$ is a topological dynamical system. Let $\Lambda\subseteq X$ be a closed $f$-invariant subset,
	then for any $\eta>0,$ there exists an $f$-invariant measure $\omega$ on $\Lambda$ such that
	\begin{description}
		\item[(1)] $S_{\omega}=C^*_\Lambda$.
		\item[(2)] $h_{\omega}(f)>h_{top}(f,\Lambda)-\eta$.
	\end{description}
\end{Lem}
\begin{proof}
	By the variational principle,   we choose a $\nu\in M_{erg}(f,  \Lambda)$ such that $h_{\nu}(f)>h_{top}(f,\Lambda)-\frac{1}{2}\eta$. By Lemma \ref{lemma-full-support},  there is a $\lambda\in M(f,  \Lambda)$ such that $S_{\lambda}=C^*_\Lambda$. Choose $\theta\in(0,  1)$ close to $1$ such that $\omega:=\theta\nu+(1-\theta)\lambda$ satisfies that
	$$h_{\omega}(f)=\theta h_{\nu}(f)+(1-\theta)h_{\lambda}(f)\geq\theta h_{\nu}(f)>h_{top}(f,\Lambda)-\eta.$$
	Then $S_{\omega}=S_{\nu}\cup S_{\lambda}=C^*_\Lambda.$
\end{proof}
\begin{Lem}\label{Lem-IrregularMultianalysis-22222}
	Suppose that $(X,  f)$ is a topological dynamical system. Let $\Lambda\subseteq X$ be a closed $f$-invariant subset. If $\Lambda$ has refined entropy-dense property and is not uniquely ergodic,
	then for any $\eta>0$ and any $n\geq2$,  there exist $f$-invariant measures $\{\omega_k\}_{k=1}^n$ on $\Lambda$ such that 
	\begin{description}
		\item[(1)] $\{S_{\omega_k}\}_{k=1}^n$ are pairwise disjoint and consequently,   $S_{\omega_k}\neq C^*_\Lambda,  k=1,  \cdots,  n$.
		\item[(2)] $h_{\omega_k}(f)>h_{top}(f,\Lambda)-\eta,  k=1,  \cdots,  n.$
	\end{description}
\end{Lem}
\begin{proof}
	Fix $\eta>0$ and $n\geq2.$   By the variational principle,  we choose a $\omega\in M_{erg}(f,  \Lambda)$ such that $h_{\omega}(f)>h_{top}(f,\Lambda)-\frac{1}{2}\eta$.
	Since $\Lambda$  is not uniquely ergodic, there exists $f$-invariant measures $\nu$ on $\Lambda$ such that $\nu\neq \omega.$ For each $1\leq k\leq n$, choose $\theta_k\in(0,  1)$ close to $1$ such that $\nu_k:=\theta_k\omega+(1-\theta_k)\nu$ satisfies that
	$$h_{\nu_k}(f)=\theta_k h_{\omega}(f)+(1-\theta_k)h_{\nu}(f)\geq\theta_k h_{\omega}(f)>h_{top}(f,\Lambda)-\frac{3}{4}\eta,$$
	and $$\theta_1<\theta_2<\cdots<\theta_n.$$
	Then $\nu_i\neq \nu_j$ for any $1\leq i<j\leq n.$ Denote $\rho_0=\min\{\rho(\nu_i,\nu_j):1\leq i<j\leq n\}.$
	By the refined entropy-dense property, for each $k=1,  \cdots,  n$,   then there exists  $\omega_k\in M_{erg}(f,  \Lambda)$ with
	\begin{description}
		\item[(a)] $M(f,  S_{\omega_k})\subseteq \mathcal{B}(\nu_k,  \frac{1}{3}\rho_0),  k=1,  \cdots,  n.  $
		\item[(b)] $h_{\omega_k}(f)>h_{top}(f,\Lambda)-\eta,  k=1,  \cdots,  n.  $
	\end{description}
	Note that $\{\mathcal{B}(\nu_k,  \frac{1}{3}\rho_0)\}_{k=1}^n$ are pairwise disjoint.   So item(a) indicates that
	$\{M(f,  S_{\omega_k})\}_{k=1}^n$ are pairwise disjoint.   As a result,
	$\{S_{\omega_k}\}_{k=1}^n$ are pairwise disjoint and consequently,   $S_{\omega_k}\neq C^*_\Lambda,  k=1,  \cdots,  n$.
\end{proof}

For any $m\in\mathbb{N}$ and $\{\nu_i\}_{i=1}^m \subseteq M(X)$,   we write $\operatorname{cov}\{\nu_i\}_{i=1}^m$ for the convex combination of $\{\nu_i\}_{i=1}^m$,   namely,
$$\operatorname{cov}\{\nu_i\}_{i=1}^m=\operatorname{cov}(\nu_1,\cdots,\nu_m):=\left\{\sum_{i=1}^mt_i\nu_i:t_i\in[0,  1],  1\leq i\leq m~\textrm{and}~\sum_{i=1}^mt_i=1\right\}.$$

\begin{Lem}\label{Lemma-AA-New}
	Suppose $(X,  f)$ is a topological dynamical system. Let $\Lambda\subseteq X$ be a closed $f$-invariant subset, and assume that $\Lambda$ has refined entropy-dense property and is not uniquely ergodic.
	\begin{description}
		\item[(1)] If $C^*_\Lambda=\Lambda,$ then for any $\eta>0$ and any $i\in\{1,2,\cdots,6\}$, there is $K_i\subseteq M(f,  \Lambda)$ such that 
		$$G_{K_1}^\Lambda\cap Rec(f)\subseteq\mathscr{C}(101111), G_{K_2}^\Lambda\cap Rec(f)\subseteq\mathscr{C}(101011), G_{K_3}^\Lambda\cap Rec(f)\subseteq\mathscr{C}(110111),$$ 	$$G_{K_4}^\Lambda\cap Rec(f)\subseteq\mathscr{C}(100111), G_{K_5}^\Lambda\cap Rec(f)\subseteq\mathscr{C}(110011), G_{K_6}^\Lambda\cap Rec(f)\subseteq\mathscr{C}(100011),$$
		$$G_{K_1}^\Lambda\cap NRec(f)\subseteq\mathscr{C}(101110), G_{K_2}^\Lambda\cap NRec(f)\subseteq\mathscr{C}(101010), G_{K_3}^\Lambda\cap NRec(f)\subseteq\mathscr{C}(110110),$$ 	$$G_{K_4}^\Lambda\cap NRec(f)\subseteq\mathscr{C}(100110), G_{K_5}^\Lambda\cap NRec(f)\subseteq\mathscr{C}(110010), G_{K_6}^\Lambda\cap NRec(f)\subseteq\mathscr{C}(100010)$$		
        and $$\inf\{h_\mu (f):  \mu\in K_i\}>h_{top}(f,\Lambda)-\eta.$$
		\item[(2)] If $C^*_\Lambda\subsetneq \Lambda,$ then for any $\eta>0$ and any $i\in\{1',2',\cdots,6'\}$, there is $K_i\subseteq M(f,  \Lambda)$ such that 
		$$G_{K_1}^\Lambda\cap Rec(f)\subseteq\mathscr{C}(101101), G_{K_2}^\Lambda\cap Rec(f)\subseteq\mathscr{C}(101001), G_{K_3}^\Lambda\cap Rec(f)\subseteq\mathscr{C}(110101),$$ 	$$G_{K_4}^\Lambda\cap Rec(f)\subseteq\mathscr{C}(100101), G_{K_5}^\Lambda\cap Rec(f)\subseteq\mathscr{C}(110001), G_{K_6}^\Lambda\cap Rec(f)\subseteq\mathscr{C}(100001),$$
		$$G_{K_1}^\Lambda\cap NRec(f)\subseteq\mathscr{C}(101100), G_{K_2}^\Lambda\cap NRec(f)\subseteq\mathscr{C}(101000), G_{K_3}^\Lambda\cap NRec(f)\subseteq\mathscr{C}(110100),$$ 	$$G_{K_4}^\Lambda\cap NRec(f)\subseteq\mathscr{C}(100100), G_{K_5}^\Lambda\cap NRec(f)\subseteq\mathscr{C}(110000), G_{K_6}^\Lambda\cap NRec(f)\subseteq\mathscr{C}(100000)$$		
		and $$\inf\{h_\mu (f):  \mu\in K_i\}>h_{top}(f,\Lambda)-\eta.$$
	\end{description}
\end{Lem}
\begin{proof}
	By Lemma \ref{Lem-IrregularMultianalysis-11111}, there exists an $f$-invariant measure $\omega_0$ on $\Lambda$ such that
	\begin{description}
		\item[(a)] $S_{\omega_0}=C^*_\Lambda$.
		\item[(b)] $h_{\omega_0}(f)>h_{top}(f,\Lambda)-\eta$.
	\end{description}
	By Lemma \ref{Lem-IrregularMultianalysis-22222}, there exist $f$-invariant measures $\{\omega_k\}_{k=1}^4$ on $\Lambda$ such that 
	\begin{description}
		\item[(c)] $\{S_{\omega_k}\}_{k=1}^4$ are pairwise disjoint and consequently,   $S_{\omega_k}\neq C^*_\Lambda,  k=1,  \cdots,  4.$
		\item[(d)] $h_{\omega_k}(f)>h_{top}(f,\Lambda)-\eta,  k=1,  \cdots,  4.$
	\end{description}
	By item (c), we have $S_{\omega_1}\cup S_{\omega_2}\cup S_{\omega_3}\subseteq C^*_\Lambda\setminus S_{\omega_4}  \subsetneq C^*_\Lambda.$
	Define 
	$$\nu_1=\frac{1}{3}\omega_{0}+\frac{2}{3}\omega_{1},\ \nu_2=\frac{1}{3}\omega_{1}+\frac{2}{3}\omega_{2},$$
	$$\nu_3=\frac{1}{2}\omega_{1}+\frac{1}{2}\omega_{2},\ \nu_4=\frac{1}{2}\omega_{1}+\frac{1}{2}\omega_{3},$$
	Then we have
	\begin{itemize}
		\item $\nu_1\neq\omega_0,$ and $S_{\nu_1}=S_{\omega_0}=C^*_\Lambda.$
		\item $\nu_2\neq\nu_3,$ and $S_{\nu_2}=S_{\nu_3}=S_{\omega_1}\cup S_{\omega_2}\subsetneq C^*_\Lambda.$
		\item $\nu_3\neq\nu_4,$ and $S_{\nu_3}\cap S_{\nu_4}=S_{\omega_1}$ and $S_{\nu_3}\cup S_{\nu_4}=S_{\omega_1}\cup S_{\omega_2}\cup S_{\omega_3}\subsetneq C^*_\Lambda.$ 
	\end{itemize}
	Let
	$$K_1:=\operatorname{cov}\{\omega_0,\nu_1\};$$
	$$K_2:=\operatorname{cov}\{\nu_2,\nu_3\};$$
	$$K_3:=\operatorname{cov}\{\omega_k\}_{k=0}^2;$$
	$$K_4:=\operatorname{cov}\{\omega_k\}_{k=0}^1;$$
	$$K_5:=\operatorname{cov}\{\omega_k\}_{k=1}^2;$$
	$$K_6:=\operatorname{cov}\{\nu_k\}_{k=3}^4.$$
	Then we have $$\inf\{h_\mu (f):  \mu\in K_i\}\geq\inf\{h_{\omega_k} (f):  k=0,1,2,3\} >h_{top}(f,\Lambda)-\eta,\ i\in\{1,2,\cdots,6\},$$
	For any $x\in G_{K_i}^{\Lambda}$,   $\omega_f(x)=\Lambda$ by definition.   So by item (3) and item (4) of Theorem \ref{thm-density-basic-property}, 
	\begin{equation}\label{eq-X-e}
		\omega_{\underline{B}}(x)={\bigcap_{\mu\in M(f,  \omega_f(x))}S_{\mu}}= {\bigcap_{\mu\in M(f,  \Lambda)}S_{\mu}}=S_{\omega_1}\cap S_{\omega_2}=\emptyset.
	\end{equation}
	\begin{equation}\label{eq-X-f}
		\omega_{\overline{B}}(x)=C^*_x=\overline{\bigcup_{\mu\in M(f,  \omega_f(x))}S_{\mu}}=\overline{\bigcup_{\mu\in M(f,  \Lambda)}S_{\mu}}=C^*_\Lambda.
	\end{equation}
	Moreover,   since $V_f(x)=K_i$ by definition, item (1) and item (2) of Theorem \ref{thm-density-basic-property} gives that
	\begin{equation}\label{eq-X-d}
		\omega_{\underline{d}}(x)=\bigcap_{\mu\in V_f(x)}S_{\mu}=\bigcap_{\mu\in K_i}S_{\mu}~\textrm{and}~\omega_{\overline{d}}(x)=\overline{\bigcup_{\mu\in V_f(x)}S_{\mu}}=\overline{\bigcup_{\mu\in K_i}S_{\mu}}.
	\end{equation}
	Therefore,   a convenient use of \eqref{eq-X-e}, \eqref{eq-X-f} and \eqref{eq-X-d} yields item (1) and item (2).
\end{proof}

Now, we give the proof of Theorem \ref{Theorem B}: \textbf{Case} ($1\alpha_2\alpha_3\alpha_411$).

\emph{Proof of of Theorem \ref{Theorem B}: \textbf{Case} $(1\alpha_2\alpha_3\alpha_411)$.} Suppose that $(X,f)$ is topologically transitive topologically expanding or topologically transitive topologically Anosov, then $(X,f)$ is not uniquely ergodic, by Lemma \ref{lem-shadowing-full-support}, $C_X^*=X$ and by Corollary \ref{prop-entropy-dense-for-shadowing}, $(X,f)$ has the refined entropy-dense property. Denote $\Lambda=X$, then we always have that $\{x\in X:\omega_f(x)=\Lambda\}\subseteq Rec(f)$. Hence, by Lemma \ref{Lemma-AA-New},  for any $\eta>0$ and any $i\in\{1,2,\cdots,6\}$, there is $K_i\subseteq M(f,  X)$ such that 
$$G_{K_1}^X\subseteq\mathscr{C}(101111), G_{K_2}^X \subseteq\mathscr{C}(101011), G_{K_3}^X \subseteq\mathscr{C}(110111),$$ 	$$G_{K_4}^X\subseteq\mathscr{C}(100111), G_{K_5}^X \subseteq\mathscr{C}(110011), G_{K_6}^X\subseteq\mathscr{C}(100011)$$
and $$\inf\{h_\mu (f):  \mu\in K_i\}>h_{top}(f)-\eta.$$
For any nonempty open set $U\subset X$, by Lemma \ref{lem-shadowing-s-limit-shadowing}, Lemma \ref{lem-shadowing-s-limit-shadowing-2} and Theorem \ref{G-K-star}, we have that $$h_{top} (f,  G_{K_i}^{X}\cap U)=\inf\{h_\mu (f):  \mu\in K_i\}>h_{top}(f)-\eta.$$
Hence, for any $1\alpha_2\alpha_3\alpha_411\in\mathfrak{A}$, we have that $$h_{top}(f,\mathscr{C}(1\alpha_2\alpha_3\alpha_411))>h_{top}(f)-\eta.$$
Finally, by the arbitrariness of $\alpha$, we finish the proof. \qed

Before give the proof of the remaining three parts, we show the following auxiliary lemmas, which will assist us in transitioning from  \textbf{Case} $(\alpha_1\alpha_2\alpha_3\alpha_410)$ to \textbf{Case} $(\alpha_1\alpha_2\alpha_3\alpha_400)$.

\begin{Lem}\label{lem-omega-A}
	Suppose that $(X,  f)$ is topologically transitive topologically expanding. 
	Let $\Lambda\subsetneq X$ be a nonempty closed invariant subset of $X$.   Then there exists an $x\in X$ such that
	\begin{equation}\label{A-f-A}
		\Lambda\subseteq\omega_f(x)\subseteq \bigcup_{l=0}^{\infty}f^{-l}\Lambda.
	\end{equation}
	In particular,
	\begin{equation}\label{omega-f-x-B}
		\overline{\bigcup_{y\in\omega_f(x)}\omega_f(y)}\subseteq \Lambda\subseteq\omega_f(x)\neq X.
	\end{equation}
\end{Lem}
\begin{proof}
	Suppose that $(X,f)$ is topologically transitive topologically expanding. Let $c$ be the expansive constant.   By the shadowing property, there exists a $\delta>0$ such that any $\delta$-pseudo orbit can be $c$-shadowed.   Since $\Lambda$ is compact,   we cover $\Lambda$ by finite number of balls $\{B(y_i, \frac{\delta}{2})\}_{i=1}^p$.   By the transitivity,   for any two open balls $B(y_i, \frac{\delta}{2})$ and $B(y_j,  \frac{\delta}{2})$,   there exists a $w_{i,j}\in B(y_i, \frac{\delta}{2})$ such that $f^{l_{i,j}}(w_{i,j})\in B(y_j, \frac{\delta}{2})$ for some $l_{i,j}\in \mathbb{N}$.
	Let
	\begin{equation}\label{definition-of-L}
		L=\max\{l_{i,j}:1\leq i,  j\leq p\}.
	\end{equation}
	
	Meanwhile,   as a compact subset of compact metric space,   $\Lambda$ is separable,   i. e.   there exist $\{x_n\}_{n\geq1}\subseteq \Lambda$ which is dense in $\Lambda$.   Now let us construct the $\delta$-pseudo orbit as following.

	For $r\geq 1$,   one can find $n\in\mathbb{N}$ and $1\leq t\leq n$ such that $r=\frac{n(n-1)}{2}+t$.  Then let $z_r=x_t$ and choose the orbit segment $\mathfrak{O}_{r}=\langle z_r,  \cdots,  f^{r-1}(z_r)\rangle$. Since $\Lambda\subseteq\cup_{i=1}^{p} B(y_i, \frac{\delta}{2}),$ there exist $i_{r,1},i_{r,2}\in\{1,2,\dots,p\}$ such that $z_r\in B(y_{i_{r,1}}, \frac{\delta}{2})$ and $f^{r}(z_r)\in B(y_{i_{r,2}}, \frac{\delta}{2}).$  Let $\mathfrak{C}_r=\langle w_{i_{r,2},i_{r+1,1}},  \cdots,  f^{l_{i_{r,2},i_{r+1,1}}-1}(w_{i_{r,2},i_{r+1,1}})\rangle.$

	Hence we obtain the following pseudo orbit:
	$$\mathfrak{O}=\mathfrak{O}_1\mathfrak{C}_1\cdots\mathfrak{O}_r\mathfrak{C}_r\cdots.  $$
	It is clear that $\mathfrak{O}$ constitutes a $\delta$-pseudo orbit.   So there exists an $x\in B(z_1,  c)$ such that $x$ $c$-shadows $\mathfrak{O}$.   Put $j_1=0,$ $k_1=1$ and $j_m=k_{m-1}+|\mathfrak{C}_{m-1}|,  ~k_m=j_{m-1}+m$ inductively for $m\geq 2$.
	
	We now prove that $\Lambda\subseteq \omega_f(x)$.
	Indeed,   it is sufficient to prove that for any $n$,   $x_n\in\omega_f(x)$.   For then the denseness of $\{x_n\}_{n\geq1}$ in $\Lambda$ and the closeness of $\Lambda$ yield the result.   In fact,   $x_n$ occurs infinitely in $\mathfrak{O}$ at the place $\{j_{\frac{k(k-1)}{2}+n}\}_{k=n}^{\infty}$.   By the compactness of $X$,   we suppose (by taking a subsequence if necessary) that $\lim_{k\to\infty}f^{j_{\frac{k(k-1)}{2}+n}}(x)=u\in\omega_f(x)$.   Then $d(f^i(x_n),  f^i(u))=\lim_{k\to\infty}d(f^i(x_n),  f^{j_{\frac{k(k-1)}{2}+n}+i}(x))<c$.   By the positive expansiveness,   we have $u=x_n$.   Hence,   $x_n\in\omega_f(x)$.
	
	On the other hand,   for any $v\in\omega_f(x)$,   there exists a strictly increasing sequence $n_s$ such that $v=\lim_{s\to\infty}f^{n_s}(x)$.   The following discussion splits into two cases.
	
	Case 1.   For any $p\geq 1$,   there exists an $n_{s_p}$ such that $j_{m_p}\leq n_{s_p}<n_{s_p}+p\leq k_{m_p}$ for some $m_p\in\mathbb{N}$.   Suppose (take a subsequence if necessary) $\lim_{p\to\infty}f^{n_{s_p}-j_{m_p}}(z_{j_{m_p}})=a$.   Then $a\in \Lambda$ since $\Lambda$ is invariant and closed.   Since $d(f^{n_{s_p}+i}(x),  f^{n_{s_p}-j_{m_p}+i}(z_{j_{m_p}}))<c$ for $0\leq i\leq p$ and all $p$.   So we have $d(f^i(v),  f^i(a))\leq c$ for all $i$.   By the positive expansiveness,   we have $v=a\in \Lambda$.   In particular,   $\omega_f(v)\subseteq \Lambda$.
	
	Case 2.   There is $\tilde{p}$ such that for all $s\geq1$,   there exists no $m\geq1$ such that $j_m\leq n_s<n_s+\tilde{p}\leq k_m$.   In other words,   for any $s\geq1$,   there is some $m_s\geq \tilde{p}$ such that $k_{m_{s}}-\tilde{p}<n_s<j_{m_{s}+1}$.    Using pigeonhole principle,   we suppose (by taking a subsequence if necessary) that $n_s=j_{m_s+1}-l$ for some $0<l\leq \tilde{p}+L$ where $L$ is defined in \eqref{definition-of-L}.   Since $d(f^{j_{m_s+1}+i}(x),  f^i(z_{j_{m_s+1}}))<c$ for  $0\leq i\leq m_s+1$ and all $s$.   By the compactness of $\Lambda$,   we suppose (by taking a subsequence if necessary) that $\lim_{s\to\infty}z_{j_{m_s+1}}=z\in \Lambda$.   Then  we have $d(f^{i+l}(v),  f^i(z))<c$ for all $i$.   By the positive expansiveness,   we have $f^l(v)=z\in \Lambda$.
	
	We thus have proved \eqref{A-f-A},   which immediately implies \eqref{omega-f-x-B}.
\end{proof}

\begin{Lem}\label{Lemma 6.2}
	Suppose that $(X,f)$ is a topologically transitive dynamical system, $f$ is a homeomorphism and $(X, f)$ has the two-sided s-limit-shadowing property. Let $\emptyset\neq\Lambda\subseteq X$ be closed $f$-invariant and internally chain transitive. Then for any nonempty open set $U\subseteq X$, there exists $x\in U$ with $\omega_f(x)=\omega_{f^{-1}}(x)=\Lambda$.
\end{Lem}
\begin{proof}
	Choose $x_0\in \Lambda$, $y_0\in U$ and $\varepsilon_1>0$ such that $B(y_0,\varepsilon_1)\subseteq U$, then by Lemma \ref{Mainlemma-prop-mu-gamma-n}, there exists $\varepsilon_2^*$ such that for any $0<\varepsilon\leq\varepsilon_2^*,\varepsilon_1$, there exists $n\in\mathbb{N}$ and $y\in B(x_0,\varepsilon)\cap f^{-n}B(x_0,\varepsilon)$ such that $d_H(\{f^i(y)\}_{i=0}^{n-1},\Lambda)<2\varepsilon$. Since $(X, f)$ has the two-sided s-limit-shadowing property, there exists $0<\delta<\frac{\varepsilon_2^*}{2}$ such that any $\delta$-limit-pseudo-orbit can be $\varepsilon_1$-limit-shadowed. Denote $\delta_k=\frac{\delta}{2^k}$ for any $k\in\mathbb{N}$, then there exists $n_k\in\mathbb{N}$ and $y_k\in B(x_0,\delta_k)\cap f^{-n_k}B(x_0,\delta_k)$ such that $d_H(\{f^i(y_k)\}_{i=0}^{n-1},\Lambda)<2\delta_k$. Since $(X,f)$ is topologically transitive, there exist a $\frac{\delta}{2}$-chain $\mathfrak{C}_{x_0y_0}$ connecting $x_0$ and $y_0$ and a  $\frac{\delta}{2}$-chain $\mathfrak{C}_{y_0x_0}$ connecting $y_0$ and $x_0$. Denote $\mathfrak{C}_i=<y_i,\cdots,f^{n_k-1}(y_i)>$, then $$\mathfrak{C}=\cdots\mathfrak{C}_3\mathfrak{C}_2\mathfrak{C}_1\mathfrak{C}_{x_0y_0}\mathfrak{C}_{y_0x_0}\mathfrak{C}_1\mathfrak{C}_2\mathfrak{C}_3\cdots$$ is a $\delta$-limit-pseudo-orbit. Hence, there exists $x\in X$ such that $\mathfrak{C}$ is $\varepsilon_1$-limit-shadowed by $x$ and $x\in B(y_0,\varepsilon_1)\subseteq U$. As a result, we have that $\omega_f(x)=\omega_{f^{-1}}(x)=\Lambda$.
\end{proof}

\begin{Lem}\label{lemma-AA}
	Suppose $f:X\to X$ is a homeomorphism on compact metric space. If $(X,  f)$ is topologically transitive and satisfies s-limit-shadowing property and uniform separation property, 
	then for any nonempty internally chain transitive closed invariant subsets $\Lambda \subsetneq X,$ there exists an internally chain transitive closed invariant subsets $\tilde{\Lambda}\subsetneq X$ such that
	\begin{equation*}
		\overline{\bigcup_{y\in\tilde{\Lambda}}\omega_f(y)}\subseteq \Lambda\subsetneq \tilde{\Lambda}.
	\end{equation*}
\end{Lem}
\begin{proof}
	Denote $U=X\setminus\Lambda$, by Lemma \ref{Lemma 6.2}, we can take $x\in U$ such that  $\omega_{f}(x)=\omega_{f^{-1}}(x)=\Lambda.$ Let $\tilde{\Lambda}=\Lambda\cup\{f^n(x)\}_{n=-\infty}^{+\infty}.$ Then $\tilde{\Lambda}$ is a closed invariant subset of $X.$ Since $x\in U=X\setminus\Lambda,$ we have $\Lambda\subsetneq \tilde{\Lambda}.$ Note that $\omega_{f}(f^{n}(x))=\omega_{f}(x)=\Lambda$ for any $n\in\mathbb{Z},$ then $\overline{\bigcup_{y\in\tilde{\Lambda}}\omega_f(y)}\subseteq \Lambda.$ This implies $\tilde{\Lambda}\subsetneq X.$ Otherwise, $\Lambda\supseteq \overline{\bigcup_{y\in X}\omega_f(y)}=X$ since $(X,f)$ is topologically transitive. This contradicts $\Lambda \subsetneq X.$
	
	Now let us prove that $\tilde{\Lambda}$ is internally chain transitive. Let $x_1,x_2\in \tilde{\Lambda}$ and $\varepsilon>0.$
	
	Case (1): if $x_1,x_2\in \Lambda,$  there is an $\varepsilon$-chain $\mathfrak{C}$ in $\Lambda$ connecting $x_1$ and $x_2$ since $\Lambda$ is internally chain transitive.
	
	Case (2): if $x_1\in \Lambda,$ $x_2=f^{n_2}(x)$ for some $n_2\in\mathbb{Z},$  there exists $N<0$ such that $f^N(x_2)=f^{n_2+N}(x)\in B(f(x_1),\varepsilon)$ since $\omega_{f^{-1}}(x)=\Lambda.$ Then $\mathfrak{C}=\langle x_1, f^{N}(x_2), f^{N+1}(x_2) \cdots,  f^{-1}(x_2)\rangle$ is an $\varepsilon$-chain in $\tilde{\Lambda}$ connecting $x_1$ and $x_2.$
	
	Case (3): if $x_2\in \Lambda,$ $x_1=f^{n_1}(x)$ for some $n_1\in\mathbb{Z},$  there exists $N>0$ such that $f^N(x_1)=f^{n_1+N}(x)\in B(x_2,\varepsilon)$ since $\omega_{f}(x)=\Lambda.$Then $\mathfrak{C}=\langle x_1, f(x_1), \cdots,  f^{N}(x_1)\rangle$ is an $\varepsilon$-chain in $\tilde{\Lambda}$ connecting $x_1$ and $x_2.$
	
	Case (4): if $x_1=f^{n_1}(x),x_2=f^{n_2}(x)$ for some $n_1,n_2\in\mathbb{Z}$ with $n_1<n_2,$  then $$\mathfrak{C}=\langle f^{n_1}(x), f^{n_1+1}(x), \cdots,  f^{n_2-1}(x)\rangle$$ is an $\varepsilon$-chain in $\tilde{\Lambda}$ connecting $x_1$ and $x_2.$
	
	Case (5): if $x_1=f^{n_1}(x),x_2=f^{n_2}(x)$ for some $n_1,n_2\in\mathbb{Z}$ with $n_1\geq n_2,$ take $x_3\in\Lambda,$ then there exists $N_1>0>N_2$ such that $f^{N_1}(x_1)=f^{n_1+N_1}(x)\in B(x_3,\varepsilon)$ and $f^{N_2}(x_2)=f^{n_2+N_2}(x)\in B(f(x_3),\varepsilon)$ since $\omega_{f}(x)=\omega_{f^{-1}}(x)=\Lambda.$
	Then $$\mathfrak{C}=\langle f^{n_1}(x), f^{n_1+1}(x), \cdots,  f^{n_1+N_1}(x),x_3,f^{n_2+N_2}(x),f^{n_2+N_2+1}(x),\cdots, f^{n_2-1}(x)\rangle$$ is an $\varepsilon$-chain in $\tilde{\Lambda}$ connecting $x_1$ and $x_2.$
\end{proof}

\begin{Lem}\label{lem-Lambda-Lambda-0}
	Suppose $(X,  f)$ is a topological dynamical system.
	Let $\Lambda,  \Lambda_0$ be two closed $f$-invariant subset of $X$ with $\Lambda_0\subseteq \Lambda$.   If for any $x\in\Lambda$,   $\omega_f(x)\subseteq \Lambda_0,$  then
	$$M(f,  \Lambda)=M(f,  \Lambda_0).$$
\end{Lem}
\begin{proof}
	Of course,   $M(f,  \Lambda)\supseteq M(f,  \Lambda_0)$.   We now prove that $M(f,  \Lambda)\subseteq M(f,  \Lambda_0)$.   In fact,   by the convexity of $M(f,  \Lambda)$ and $M(f,  \Lambda_0)$,   it is sufficient to prove that for any $\mu\in M_{erg}(f,  \Lambda)$,   $\mu\in M(f,  \Lambda_0)$.   Indeed,   choose an arbitrary generic point $x\in\Lambda$ of $\mu$.   Then $S_{\mu}\subseteq \omega_f(x)\subseteq\Lambda_0$ which yields that $\mu\in M(f,  \Lambda_0)$.   The proof is completed.
\end{proof}

The following lemma play an important role in transitioning from  \textbf{Case} $(\alpha_1\alpha_2\alpha_3\alpha_410)$ to \textbf{Case} $(\alpha_1\alpha_2\alpha_3\alpha_400)$.
\begin{mainlemma}\label{maintheorem-abstract-4}
	Suppose $(X,  f)$ is topologically transitive topologically expanding or topologically transitive topologically Anosov.  Let $\Lambda\subsetneq X$ be a nonempty internally chain transitive closed invariant subset.   Then there exists an internally chain transitive closed invariant subset $\Theta\subsetneq X$ such that $\Lambda\subsetneq \Theta$ and $M(f,  \Theta)=M(f,  \Lambda).$
\end{mainlemma}
\begin{proof}
	\textbf{Case 1:} Suppose that $(X,  f)$ is topologically transitive topologically expanding. By $\Lambda\subsetneq X$ there is an open set $U\subseteq X$ such that $\Lambda\cap U=\emptyset.$
	By Lemma \ref{lem-shadowing-s-limit-shadowing-2} and Theorem \ref{G-K-star}, we can find a $z\in U$ such that $\omega_f(z)=\Lambda.$   Let $\Lambda'=\{f^n(z)\}_{n=0}^{\infty}\cup \Lambda$.   Then $\Lambda'\supsetneq \Lambda$ is closed and $f$-invariant.   So by Lemma \ref{lem-omega-A},   there is a point $x\in X$ such that
	$$\Lambda'\subseteq\omega_f(x)\subseteq \cup_{l=0}^{\infty}f^{-l}\Lambda'.$$
	In particular,
	$$\overline{\bigcup_{y\in\omega_f(x)}\omega_f(y)}\subseteq \Lambda'\subseteq\omega_f(x)\neq X.  $$
	Let $\Theta=\omega_f(x)$.   Then $\Theta$ is internally chain transitive  by Lemma \ref{Lem-omega-naturally-in-ICT}.   By Lemma \ref{lem-Lambda-Lambda-0},
	$M(f,  \Theta)=M(f,  \Lambda')=M(f, \Lambda).$
	
	\textbf{Case 2:} Suppose that $(X,  f)$ is topologically transitive topologically Anosov, then by Lemma \ref{lem-shadowing-s-limit-shadowing} and Lemma \ref{lemma-AA}, there exists an internally chain transitive closed invariant subset $\Theta\subsetneq X$ such that $$\overline{\bigcup_{y\in\Theta}\omega_f(y)}\subseteq \Lambda\subsetneq\Theta.  $$ By Lemma \ref{lem-Lambda-Lambda-0},
	$M(f,  \Theta)=M(f, \Lambda).$
\end{proof}

\subsection{Proof of Theorem \ref{Theorem B}: \textbf{Case} $(1\alpha_2\alpha_3\alpha_4\alpha_50)$} First of all, let's recall a basic property of topologically transitive dynamical system.
\begin{Lem}\label{lemma-B}
	Suppose $(X,  f)$ is a transitive topological dynamical system. Let $U\subseteq X$ be a nonempty open set. If $\Lambda\subsetneq X$ is a closed invariant subset, then there is a nonempty open set $V$ in $X$ such that $V\subseteq U$ and $V\cap \Lambda=\emptyset.$
\end{Lem}
\begin{proof}
	Since $(X,  f)$ is transitive, there is $x\in U$ such that $\omega_f(x)=X.$ By $\Lambda\subsetneq X,$ we have $x\not\in \Lambda.$ Otherwise, $\Lambda \supseteq \omega_f(x)=X.$ Denote $d=\min\{\operatorname{dist}(x,\Lambda),\operatorname{dist}(x,X\setminus U)\}>0.$ Take $V=B(x,\frac{d}{2}).$
\end{proof}
Next, we prove the following lemma, which will help us find a proper internally chain transitive closed invariant subset with sufficiently good properties and large entropy.
\begin{mainlemma}\label{Lemma B New}
	Suppose that $(X,  f)$ is topologically transitive topologically expanding or topologically transitive topologically Anosov. Then for any $0<\alpha<h_{top}(f)$, there exists a nonempty subset $\Lambda\subsetneq X$ such that
	\begin{enumerate}[(1)]
		\item $(\Lambda,f)$ is not uniquely ergodic;
		\item $(\Lambda,f)$ is internally chain transitive;
		\item $(\Lambda,f)$ has refined entropy-dense property;
		\item $C^*_\Lambda=\Lambda.$
		\item $h_{top}(f,\Lambda)>h_{top}(f)-\alpha$.
	\end{enumerate}
\end{mainlemma}
To prove Lemma \ref{Lemma B New}, we need some preparations. To begin with, we prove a lemma similar to Lemma \ref{horseshoe} and Lemma \ref{horseshoe2}.
\begin{Lem}\label{Lemma 6.9}
	Suppose that $(X,  f)$ is topologically expanding (resp. topologically Anosov) with $h_{top}(f)>0$. Then for any nonempty open set $U\subset X$, for any $0<\alpha<h_{top}(f)$,   there are $m,  k\in \mathbb{N}$,   $\frac{\log m}{k}>\alpha$ and a closed set $\Lambda\subset X$
	invariant under $f^k$ such that 
	\begin{enumerate}[(1)]
		\item $\Lambda\subseteq U$, $\bigcup_{i=0}^{k-1}f^i(\Lambda)\subsetneq X$ and $f^i(\Lambda)\cap f^j(\Lambda)$ for any $0\leq i<j\leq k-1$;
		\item there is a conjugate map $$\pi: (\Lambda,   f^k)\to (\Sigma_{m}^+,  \sigma)\text{ }(\text{resp. }(\Sigma_{m},  \sigma)).$$
	\end{enumerate}
\end{Lem}

\begin{proof}
	Choose $\eta>0$ such that $0<\alpha<h_{top}(f)-\frac{9\eta}{8}$. By Lemma  \ref{Lemma 6.5}, there exists $\nu\in M(f,  X)$ such that $(S_\mu,f)$ is minimal and $h_{\mu}>h_{top}(f)-\frac{\eta}{8}.$ It is clear that $\{\mu\}\neq M(f,  X)$ and thus $d_H(\{\mu\},  M(f,  X))>0.$ Fix $x\in S_\mu$, $\varepsilon_0>0$ and $0<\zeta<d_H(\{\mu\},  M(f,  X))$.

Choose $x\in U$ and $\hat{\varepsilon}>0$ such that $B(x,\hat{\varepsilon})\subset U$. By Corollary \ref{Maincor-mu-gamma-n-expansive},  there exist $\varepsilon^*>0$ and $0<\tilde{\varepsilon}^*<\varepsilon^*$  such that for any $0<\frac{\delta}{2}<\tilde{\varepsilon}^*$, there exists an $n_\mu\in\mathbb{N}$ such that for any $p\in\mathbb{N}$,   there exists an $(pn_\mu,  \frac{\varepsilon^*}{3})$-separated set $\Gamma_{pn_\mu}^{\mu}$ with
\begin{description}
	\item[(a)]\label{lem-B-aBasic} $\Gamma_{pn_\mu}^{\mu}\subseteq P_{pn_\mu}(f)\cap X_{pn_\mu,  \mathcal{B}(\mu,  \frac{\zeta}{4})}\cap B(x,  \frac{\delta}{2})$;
	\item[(b)]\label{lem-B-bBasic} $\frac{\log |\Gamma_{pn_\mu}^{\mu}|}{pn_\mu}>h_{\mu}-\frac{\eta}{8}$.
\end{description}
We can assume that $\frac{\varepsilon^*}{3}<\frac{c}{4}$ where $c>0$ is the expansive constant. Let $s(n,\frac{\varepsilon^*}{3})$ denote the largest cardinality of any $(n,\frac{\varepsilon^*}{3})$-separated set of $X$, then by \cite[Theorem 7.11]{Walters} one has
\begin{equation}
	h_{top}(f)=\limsup_{n\to\infty}\frac{1}{n}\log s(n,\frac{\varepsilon^*}{3}).
\end{equation}
Then there exists $N\in\mathbb{N}$ such that for any $n\geq N,$ one has 
\begin{equation}\label{equation-AB}
	s(n,\frac{\varepsilon^*}{3})<e^{n(h_{top}(f)+\frac{\eta}{4})}.
\end{equation}

Set $\varepsilon=\min\{\frac{\zeta}{4},  \frac{\tilde{\varepsilon}^*}{27},\frac{\varepsilon_0}{2},\frac{\hat{\varepsilon}}{3}\}.$  Then there exists a $0<\delta<\varepsilon$ such that any $\delta$-pseudo-orbit can be $\varepsilon$-shadowed by some point in $X.$ Set $n=p_0n_\mu$ where $p_0$ is large enough such that 
\begin{equation}\label{equation-AA}
	n\geq 2N,\ e^{n(h_{\mu}-\frac{\eta}{8})}-n\geq e^{n(h_{\mu}-\frac{\eta}{4})}\ \text{ and } e^{n(h_{top}(f)-\frac{\eta}{4})}>\lceil\frac{n}{2}\rceil e^{\lceil\frac{n}{2}\rceil(h_{top}(f)+\frac{\eta}{4})}+\sum_{m=1}^{N-1}| P_{m}^*(f)|
\end{equation} 
where $P_{m}^*(f)$ is the set of periodic points with minimal period $m.$
Then $P_{n_\mu}(f)\subseteq P_n(f)$ by definition and furthermore,   we can obtain an $(n,  \frac{\varepsilon^*}{3})$-separated set $\Gamma_n$ with
\begin{description}
	\item[(a)]\label{lem-B-aBasic2} $\Gamma_{n}\subseteq P_{n}(f)\cap X_{n,  \mathcal{B}(\mu,  \frac{\zeta}{4})}\cap B(x, \frac{\delta}{2})$;
	\item[(b)]\label{lem-B-bBasic2} $\frac{\log |\Gamma_{n}|}{n}>h_{\mu}-\frac{\eta}{8}$.
\end{description}
Since periodic points in $\Gamma_n$ with same period $l_0$ for some $l_0\in\mathbb{N}$ are $(l_0,\frac{\varepsilon^*}{3})$-separated, by \eqref{equation-AB} and \eqref{equation-AA} we have
\begin{equation*}
	\begin{split}
		\sum_{i=1}^{\lceil\frac{n}{2}\rceil}| P_{i}^*(f)\cap \Gamma_n|\leq &\sum_{i=N}^{\lceil\frac{n}{2}\rceil} s(i,\frac{\varepsilon^*}{3})+\sum_{i=1}^{N-1}| P_{i}^*(f)|\\
		<&\sum_{i=N}^{\lceil\frac{n}{2}\rceil} e^{{i}(h_{top}(f)+\frac{\eta}{4})}+\sum_{i=1}^{N-1}| P_{i}^*(f)|\\
		\leq&{\lceil\frac{n}{2}\rceil}e^{{\lceil\frac{n}{2}\rceil}(h_{top}(f)+\frac{\eta}{4})}+\sum_{i=1}^{N-1}| P_{i}^*(f)|\\
		< &e^{n(h_{top}(f)-\frac{\eta}{4})}<e^{n(h_{\mu}-\frac{\eta}{8})}<|\Gamma_{n}|.
	\end{split}
\end{equation*}
Thus there exists $x_0\in\Gamma_n$ with minimal period $n.$ Together with $\frac{\varepsilon^*}{3}<\frac{c}{4},$ the only sub-intervals of length $n$ of $\langle x_0,  f(x_0),  \cdots,  f^{n-1}(x_0),x_0,  f(x_0),  \cdots,  f^{n-1}(x_0)\rangle$ that are $\frac{\varepsilon^*}{9}$-shadowed by $\langle x_0,  f(x_0),  \cdots,  f^{n-1}(x_0)\rangle$ are the initial and the final sub-intervals.
By the separation assumption we have $$|\{y\in\Gamma_n:d_n(y,f^j(x_0))<\frac{\varepsilon^*}{9} \text{ for some }0\leq j\leq n-1\}|\leq n.$$ Consequently, by \eqref{equation-AA} one can find a subset $\widetilde{\Gamma}_n\subset \Gamma_n$ with $|\widetilde{\Gamma}_n|>e^{n(h_{\mu}-\frac{\eta}{4})}$ such that $d_n(y,f^j(x_0))\geq \frac{\varepsilon^*}{9}$ for any $y\in\widetilde{\Gamma}_n$ and $0\leq j\leq n-1.$ 
Denote $r=|\widetilde{\Gamma}_n|.$ Enumerate the elements of each $\widetilde{\Gamma}_n$ by $\widetilde{\Gamma}_n=\{p_1,  \cdots,  p_{r}\}$.  
Take $l$ large enough such that
\begin{equation}\label{equation-AC}
	\frac{1}{l}<\frac{\zeta}{16} \text{ and }\frac{(l-2)\log|\widetilde{\Gamma}_n|}{nl}>h_{\mu}-\eta>\alpha.
\end{equation}
Denote $$k=nl\text{ and }m=r^{l-2}.$$

Now let $\Gamma=\widetilde{\Gamma}_n\times\widetilde{\Gamma}_n\times\cdots\times\widetilde{\Gamma}_n$ whose element is $\underline{y}=(y_1,  \cdots,  y_{l-2})$ with $y_j\in\widetilde{\Gamma}_n$ for $1\leq j\leq l-2.$  For any $y\in X$,  let $\mathfrak{C}_y^n=\langle y,  fy,  \cdots,  f^{n-1}y\rangle$. Then for $\underline{y}\in \Gamma$  we define the following pseudo-orbit:
$$\mathfrak{C}_{\underline{y}}=\mathfrak{C}_{x_0}^n\mathfrak{C}_{x_0}^n\mathfrak{C}_{y_1}^n\mathfrak{C}_{y_2}^n\cdots\mathfrak{C}_{y_{l-2}}^n.$$
It is clear that $\mathfrak{C}_{\underline{y}}$ is a $\delta$-pseudo-orbit.   Moreover,   one notes that we can freely concatenate such $\mathfrak{C}_{\underline{y}}$s to constitutes a $\delta$-pseudo-orbit. We write $\mathfrak{C}_{\underline{y}}=\langle \omega_0,  \omega_1,  \cdots,  \omega_{nl-1}\rangle.$ If $d(f^k(z),\omega_{k+1})\leq \varepsilon$ for any $0\leq k\leq ln-1$ then by Lemma \ref{lem:prohorov} and \eqref{equation-AC} one has 
\begin{equation}\label{eq-AA}
	\begin{split}
		\rho(\mathcal{E}_{ln}(z),  \mu)\leq &\rho(\mathcal{E}_{ln}(z), \frac{1}{n(l-2)} \sum_{k=1}^{n(l-2)}\delta_{\omega_k})+\rho(\frac{1}{n(l-2)} \sum_{k=1}^{n(l-2)}\delta_{\omega_k},  \mu)\\
		\leq &\varepsilon+\frac{4}{l}+\frac{1}{l-2}\sum_{j=1}^{l-2}\rho(\mathcal{E}_{n}(y_j),  \mu)\\
		<&\varepsilon+\frac{4}{l}+\frac{\zeta}{4}<\frac34\zeta.
	\end{split}
\end{equation}
Now we define
$$\Sigma_{m}^+=\Sigma_{r^{l-2}}^+:=\theta_{0}\theta_{1}\theta_{2}\dots:\theta_{j}=(\theta_{j,1},  \cdots,  \theta_{j,l-2})\in \Gamma \text{ for any } j\in\mathbb{Z}^+ \}.$$ 

$$(\text{resp. }\Sigma_{m}=\Sigma_{r^{l-2}}:=\{\theta=\dots\theta_{-2}\theta_{-1}\theta_{0}\theta_{1}\theta_{2}\dots:\theta_{j}=(\theta_{j,1},  \cdots,  \theta_{j,l-2})\in \Gamma \text{ for any } j\in\mathbb{Z} \}.)$$ 
Then $(\Sigma_{m}^+,\sigma)$ (resp. $(\Sigma_{m},\sigma)$) is a full shift and for each $\theta=\theta_{0}\theta_{1}\theta_{2}\dots$ (resp. $\theta=\dots\theta_{-2}\theta_{-1}\theta_{0}\theta_{1}\theta_{2}\dots$) in $\Sigma_{m}^+$ (resp. $\Sigma_{m}$)

$$\mathfrak{C}_{\theta}=\mathfrak{C}_{\theta_{0}}\mathfrak{C}_{\theta_{1}}\mathfrak{C}_{\theta_{2}}\dots\text{ }(\text{resp. }\mathfrak{C}_{\theta}=\dots\mathfrak{C}_{\theta_{-2}}\mathfrak{C}_{\theta_{-1}}\mathfrak{C}_{\theta_{0}}\mathfrak{C}_{\theta_{1}}\mathfrak{C}_{\theta_{2}}\dots)$$ is a $\delta$-pseudo-orbit. We write $\mathfrak{C}_{\theta}=\omega_{0}\omega_{1}\omega_{2}\dots$ (resp. $\mathfrak{C}_{\theta}=\dots\omega_{-2}\omega_{-1}\omega_{0}\omega_{1}\omega_{2}\dots$),  by the shadowing property,
$$Y_{\theta}=\{z\in X:d(f^{j}(z),  \omega_j)\leq\varepsilon, \text{ for any }j\in\mathbb{Z}^+\}$$
$$(\text{resp. }Y_{\theta}=\{z\in X:d(f^{j}(z),  \omega_j)\leq\varepsilon,  \text{ for any }j\in\mathbb{Z}\})$$
is nonempty and closed. Since $\omega_0=x_0$, one has $$Y_\theta\subset\overline{B(x_0,\varepsilon)}\subseteq B(x_0,\frac{\hat{\varepsilon}}{2})\subseteq B(x, \frac{\delta}{2}+\frac{\hat{\varepsilon}}{2})\subseteq B(x, \hat{\varepsilon})\subseteq U.$$

We claim that $Y_{\theta}\cap Y_{\theta'}=\emptyset$ for any $\theta\neq\theta'$ in $\Sigma_{m}^+$ (resp. $\Sigma_{m}$). 
By $\theta\neq\theta'$ there is $t\in\mathbb{Z}^+$ (resp. $t\in\mathbb{Z}$) and $1\leq s\leq l-2$ such that $\theta_{t,s}\neq\theta'_{t,s}.$ Since $\theta_{t,s}$ and $\theta'_{t,s}$ are $(n,\frac{\varepsilon^*}{3})$-separated, we have $d_n(f^{lnt+sn+n}(z),f^{lnt+sn+n}(z'))>\varepsilon^*/3-2\varepsilon>\varepsilon^*/9$ for any $z\in Y_{\theta}$ and $z'\in Y_{\theta'}.$ So $Y_{\theta}\cap Y_{\theta'}=\emptyset.$
Then we can define the following disjoint union:
$$\Lambda=\bigsqcup_{\theta\in \Sigma_{m}^+}Y_{\theta}\text{ }(\text{resp. }\Lambda=\bigsqcup_{\theta\in \Sigma_{m}}Y_{\theta}).$$
Note that $\Lambda\subseteq U$, $f^{nl}(Y_{\theta})\subset Y_{\sigma(\theta)}$ and $k=nl$.   Then $f^k(\Lambda)\subset\Lambda.$
Therefore,   if we define $\pi:\Lambda\to \Sigma_{m}^+\text{ (resp. }\Sigma_{m})$ as
\begin{equation*}
	\pi(x):=\theta \textrm{ for all } x\in Y_{\theta}~\textrm{with}~\theta\in \Sigma_{m}^+\text{ (resp. }\Sigma_{m}),
\end{equation*}
then $\pi$ is surjective by the shadowing property.   Follow the same line of Lemma \ref{horseshoe2}, we can prove $\pi$ is continuous. Since both $\Lambda$ and $\Sigma_m^+\text{ }(\text{resp. }\Sigma_m)$ are compact metric space, we have that $\pi$ is a homeomorphism and thus $\pi$ is a conjugate map. Hence, $f^k(\Lambda)=\Lambda$.

Next we show that $f^{k''}(\Lambda)\cap f^{k'}(\Lambda)=\emptyset$ for any $0\leq k''<k'\leq k-1.$ It is enough to show that $\Lambda\cap f^{\tilde{k}}(\Lambda)=\emptyset$ for any $1\leq\tilde{k}\leq k-1$. In fact, if $f^{i}(\Lambda)\cap f^{j}(\Lambda)\neq\emptyset$ for some $0\leq i<j\leq k-1$, then $$\emptyset\neq f^{N-i}(f^{i}(\Lambda)\cap f^{j}(\Lambda))\subset f^N(\Lambda)\cap f^{j+N-i}(\Lambda)=\Lambda\cap f^{j-i}(\Lambda).$$ Now, suppose that $\Lambda\cap f^{\tilde{k}}(\Lambda)\neq\emptyset$ for some $1\leq\tilde{k}\leq k-1$ then for any $z\in \Lambda\cap f^{\tilde{k}}(\Lambda),$ there exist $\theta,\  \theta'\in \Sigma_{m}^+\text{ }(\text{resp. }\Sigma_m)$ such that 
$$d(f^j(z),  \omega_{j})\leq\varepsilon \text{ and } d(f^j(z),  \omega'_{j+\tilde{k}})\leq\varepsilon,\text{ for any } j\in\mathbb{Z}^+\text{ (resp. }\mathbb{Z})$$
where $$\mathfrak{C}_{\theta}=\omega_{0}\omega_{1}\omega_{2}\dots\text{ and }\mathfrak{C}_{\theta'}=\omega'_{0}\omega'_{1}\omega'_{2}\dots.$$ $$(\text{resp. } \mathfrak{C}_{\theta}=\dots\omega_{-2}\omega_{-1}\omega_{0}\omega_{1}\omega_{2}\dots\text{ and }\mathfrak{C}_{\theta'}=\dots\omega'_{-2}\omega'_{-1}\omega'_{0}\omega'_{1}\omega'_{2}\dots.)$$ Then we have 
\begin{equation}\label{eq-AD}
	d(\omega_{j},  \omega'_{j+\tilde{k}})\leq2\varepsilon, \text{ for any } j\in\mathbb{Z}^+\text{ (resp. }\mathbb{Z}).
\end{equation}

Case (1): If $1\leq \tilde{k}\leq n-1,$ then \eqref{eq-AD} implies $d(\omega_{j},  \omega'_{j+\tilde{k}})\leq2\varepsilon<\frac{\varepsilon^*}{9}\text{ for any }0\leq j\leq n-1.$ Note that $\omega_{0}\omega_{1}\omega_{2}\dots\omega_{2n-1}=\omega'_{0}\omega'_{1}\omega'_{2}\dots\omega'_{2n-1}=\langle x_0,  fx_0,  \cdots,  f^{n-1}x_0,x_0,  fx_0,  \cdots,  f^{n-1}x_0\rangle,$ this contradicts that the minimal period of $x_0$ is $n.$ 

Case (2): If $\tilde{k}= n,$ then \eqref{eq-AD} implies $d(\omega_{j+n},  \omega'_{j+2n})\leq2\varepsilon<\frac{\varepsilon^*}{9}\text{ for any }0\leq j\leq n-1.$ Note that $\omega_{n}\omega_{n+1}\dots\omega_{2n-1}=\langle x_0,  fx_0,  \cdots,  f^{n-1}x_0\rangle$ and $\omega'_{2n}\in \widetilde{\Gamma}_n,$ this contradicts that $d_n(y,f^j(x_0))\geq \frac{\varepsilon^*}{9}$ for any $y\in\widetilde{\Gamma}_n$ and $0\leq j\leq n-1.$ 

Case (3): If $n<\tilde{k}\leq n(l-1),$ then $\tilde{k}=tn+s$ for some $1\leq t\leq l-2$ and $0\leq s\leq n-1.$ Thus \eqref{eq-AD} implies $d(\omega_{n-s+j},  \omega'_{(t+1)n+j})\leq2\varepsilon\text{ for any }0\leq j\leq n-1.$ Note that $\omega_{0}\omega_{1}\omega_{2}\dots\omega_{2n-1}=\langle x_0,  fx_0,  \cdots,  f^{n-1}x_0,x_0,  fx_0,  \cdots,  f^{n-1}x_0\rangle,$ and $\omega'_{(t+1)n}\in \widetilde{\Gamma}_n,$ this contradicts that $d_n(y,f^j(x_0))\geq \frac{\varepsilon^*}{9}$ for any $y\in\widetilde{\Gamma}_n$ and $0\leq j\leq n-1.$ 

Case (4): If $n(l-1)< \tilde{k}\leq nl-1,$ then \eqref{eq-AD} implies $d(\omega_{nl-\tilde{k}+j},  \omega'_{nl+j})\leq2\varepsilon\text{ for any }0\leq j\leq n-1.$ Note that $\omega_{0}\omega_{1}\omega_{2}\dots\omega_{2n-1}=\omega'_{nl}\omega'_{nl+1}\omega'_{nl+2}\dots\omega'_{(l+2)n-1}=\langle x_0,  fx_0,  \cdots,  f^{n-1}x_0,x_0,  fx_0,  \cdots,  f^{n-1}x_0\rangle,$ and $1\leq nl-\tilde{k}<n,$ this contradicts that the minimal period of $x_0$ is $n.$ 

So we have $\Lambda\cap f^{\tilde{k}}(\Lambda)=\emptyset$ for any $1\leq\tilde{k}\leq k-1$ and thus $f^{k''}(\Delta)\cap f^{k'}(\Delta)=\emptyset$ for any $0\leq k''<k'\leq k-1.$ 

For any ergodic measure $\xi\in M(f,\bigcup_{i=0}^{k-1}f^i(\Lambda))$,   pick an arbitrary generic point $z$ of $\xi$ in $\Lambda$. 
Then $$\rho(\mathcal{E}_{ln}(f^{tln}(z)),  \mu)<\frac34\zeta\text{ for any } t\in\mathbb{N}$$ by \eqref{eq-AA}.
In addition,   we have $\xi=\lim_{j\to\infty}\mathcal{E}_j(z)=\lim_{t\to\infty}\mathcal{E}_{tln}(z)$.   So we have $$\rho(\xi,  \mu)=\lim_{t\to\infty}\rho(\mathcal{E}_{tln}(z),  \mu)\leq \frac34\zeta.$$   By the ergodic decomposition theorem,   we obtain that $$M(f,  \bigcup_{i=0}^{k-1}f^i(\Lambda))\subseteq  \mathcal{B}(\mu,  \zeta).$$
As a result,   $\bigcup_{i=0}^{k-1}f^i(\Lambda)\subsetneq X$.   For otherwise,   $$d_H(\{\mu\},  M(f,  \bigcup_{i=0}^{k-1}f^i(\Lambda)))=d_H(\{\mu\},  M(f,  X))>\zeta,$$   it is a contradiction.
\end{proof} 

To know the property of $(\bigcup_{i=0}^{k-1}f^i(\Lambda),f)$ for $(\Lambda,f^k)$ in Lemma \ref{Lemma 6.9}, we will show the following lemma.
\begin{Lem}\label{relative-specification}
	Suppose $(X,  f)$ is a topological dynamical system. Consider $\Delta\subseteq X$ which satisfies $f^n(\Delta)\subseteq \Delta$ for some $n\in\mathbb{N}$ and let $\Lambda=\bigcup_{i=0}^{n-1}f^i(\Delta).$ If $(\Delta,  f^n)$ has periodic specification property,   then $(\Lambda,  f)$ has periodic gluing orbit property.
\end{Lem}
\begin{proof}
	Fix $\varepsilon>0.$  Let $\eta \in(0, \varepsilon)$ be such that for every $y,\ z \in X$ and every $i=0, \cdots, n-1$ one has $d\left(f^{i}(y), f^{i}(z)\right) \leq \varepsilon$ provided $d(y, z) \leq \eta$. We claim that it is enough to set $K_\varepsilon=nM_\eta+3n,$ where $M_\eta$ is  defined in Definition \ref{definition of specification}. Fix $x_{1}, \dots, x_{k}\in X$ and $n_{1}, \dots, n_{k} \geq 1.$ 
	For any $1\leq i\leq k$ let $\tilde{x}_{i} \in \Delta$ be such that there exists $m_{i} \in\{0, \ldots, n-1\}$ such that $f^{m_{i}}\left(\tilde{x}_{i}\right)=x_{i}$. And for any $1\leq i\leq k$ there is $q_i\geq 0$ and $r_{i} \in\{0, \ldots, n-1\}$ such that $n_i=q_in+r_i.$ Let $\tilde{n}_i=q_i+2.$
	By periodic specification property of $(\Delta,  f^n)$ there is $\tilde{x}$ in $\Delta$ such that $d(f^{jn}(\tilde{x}),f^{jn}(\tilde{x}_{1}))\leq \eta$ for every $0\leq j \leq \tilde{n}_{1}-1$ and $$d(f^{(j+\tilde{n}_{1}+M_\eta+\dots +\tilde{n}_{i-1}+M_\eta)n}(\tilde{x}),f^{jn}(\tilde{x}_{i}))\leq \eta$$ for all $0\leq j \leq \tilde{n}_{i}-1,$ $2 \leq i \leq k,$ and $f^{(\tilde{n}_{1}+M_\eta+\dots +\tilde{n}_{k}+M_\eta)n}(\tilde{x})=\tilde{x}.$
	Then we have $d(f^{j}(\tilde{x}),f^{j}(\tilde{x}_{1}))\leq \varepsilon$ for every $0\leq j \leq \tilde{n}_{1}n-1$ and $$d(f^{j+(\tilde{n}_{1}+M_\eta+\dots +\tilde{n}_{i-1}+M_\eta)n}(\tilde{x}),f^{j}(\tilde{x}_{i}))\leq \varepsilon$$ for all $0\leq j \leq \tilde{n}_{i}n-1,$ $2 \leq i \leq k.$
	Set $x=f^{m_1}(\tilde{x}),$ $p_i=2n-m_i-r_i+nM_\eta+m_{i+1}$ for any $1 \leq i \leq k-1,$ and $p_k=2n-m_k-r_k+nM_\eta+m_1.$ Then $d(f^{j}(x),f^{j}(x_{1}))\leq \varepsilon$ for every $0\leq j \leq n_{1}-1$ and $$d(f^{j+n_{1}+p_{1}+\dots +n_{i-1}+p_{i-1}}(x),f^{j}(x_{i}))\leq \varepsilon$$ for all $0\leq j \leq n_{i}-1$ and $2 \leq i \leq k,$ and $f^{n_{1}+p_{1}+\dots +n_{k}+p_{k}}(x)=x.$ So by $p_i\leq 2n+nM_\eta+n=K_\varepsilon$ we completes the proof.
\end{proof}

Now, we give the proof of Lemma \ref{Lemma B New}.

\emph{Proof of Lemma \ref{Lemma B New}.} Suppose that $(X,f)$ is topologically transitive topologically expanding (resp. topologically transitive topologically Anosov) and fix $\alpha>0$, then by Lemma \ref{Lemma 6.9}, there are $m,  k\in \mathbb{N}$,   $\frac{\log m}{k}>h_{top}(f)-\alpha$ and a closed set $\Delta\subset X$
invariant under $f^k$ such that $\bigcup_{i=0}^{k-1}f^i(\Delta)\subsetneq X$ and there is a conjugate map $$\pi: (\Delta,   f^k)\to (\Sigma_{m}^+,  \sigma)\text{ }(\text{resp. }(\Sigma_{m},  \sigma)).$$ Since $(\Sigma_{m}^+,  \sigma)\text{ }(\text{resp. }(\Sigma_{m},  \sigma))$ has periodic specification property by \cite[Proposition 21.2]{DGS},  one has that $(\Delta,  f^k)$ also has periodic specification property. Hence, by Lemma \ref{relative-specification}, $(\Lambda,  f)$ has periodic gluing orbit property where $\Lambda=\bigcup_{i=0}^{k-1}f^i(\Delta)\subsetneq X.$ As a result, $(\Lambda,f)$ is topologically transitive and thus internally chain transitive. By Corollary \ref{prop-entropy-dense-for-shadowing}, $(\Lambda,  f)$ has the refined entropy-dense property. It can be checked that periodic points are dense in $\Lambda,$ then there is a $\nu\in M(f,  \Lambda)$ with $S_\nu=\Lambda$ \cite[Proposition 21.12]{DGS}.  This implies that $C^*_\Lambda=\Lambda.$
Since $\pi$ is a conjugation,
$$h_{top}(f,  \Lambda)=\frac1kh_{top}(f^k,  \Delta)=\frac1kh_{top}(\sigma,  \Sigma_m^+)=\frac{\log m}{k}>h_{top}(f)-\alpha>0.$$
Therefore, $(\Lambda,f)$ is not uniquely ergodic. \qed

Now, we give the proof of Theorem \ref{Theorem B}: \textbf{Case} $(1\alpha_2\alpha_3\alpha_4\alpha_50)$.

\emph{Proof of Theorem \ref{Theorem B}: \textbf{Case} $(1\alpha_2\alpha_3\alpha_4\alpha_50)$.} Given $0<\eta<h_{top}(f)$, by Lemma \ref{Lemma B New}, there exists a nonempty subset $\Lambda\subsetneq X$ such that
\begin{enumerate}[(1)]
	\item $(\Lambda,f)$ is not uniquely ergodic;
	\item $(\Lambda,f)$ is internally chain transitive;
	\item $(\Lambda,f)$ has refined entropy-dense property;
	\item $C^*_\Lambda=\Lambda.$
	\item $h_{top}(f,\Lambda)>h_{top}(f)-\eta$.
\end{enumerate} 
By Lemma \ref{maintheorem-abstract-4}, there exists an internally chain transitive closed invariant subset $\Theta\subsetneq X$ such that $\Lambda\subsetneq \Theta$ and $M(f,  \Theta)=M(f,  \Lambda).$ As a result, $C_\Theta^*=\Lambda\subsetneq \Theta$. For $\Lambda$, by item (1) of Lemma \ref{Lemma-AA-New}, for any $i\in\{1,2,\cdots,6\}$, there is $K_i\subseteq M(f,  \Lambda)$ such that 
$$G_{K_1}^\Lambda\cap NRec(f)\subseteq\mathscr{C}(101110), G_{K_2}^\Lambda\cap NRec(f)\subseteq\mathscr{C}(101010), G_{K_3}^\Lambda\cap NRec(f)\subseteq\mathscr{C}(110110),$$ 	$$G_{K_4}^\Lambda\cap NRec(f)\subseteq\mathscr{C}(100110), G_{K_5}^\Lambda\cap NRec(f)\subseteq\mathscr{C}(110010), G_{K_6}^\Lambda\cap NRec(f)\subseteq\mathscr{C}(100010)$$		
and $$\inf\{h_\mu (f):  \mu\in K_i\}>h_{top}(f,\Lambda)-\eta>h_{top}(f)-2\eta.$$ For $\Theta$, by item (2) of Lemma \ref{Lemma-AA-New}, for any $i\in\{1',2',\cdots,6'\}$, there is $K_i\subseteq M(f,  \Theta)$ such that 
$$G_{K_1}^\Theta\cap NRec(f)\subseteq\mathscr{C}(101100), G_{K_2}^\Theta\cap NRec(f)\subseteq\mathscr{C}(101000), G_{K_3}^\Theta\cap NRec(f)\subseteq\mathscr{C}(110100),$$ 	$$G_{K_4}^\Theta\cap NRec(f)\subseteq\mathscr{C}(100100), G_{K_5}^\Theta\cap NRec(f)\subseteq\mathscr{C}(110000), G_{K_6}^\Theta\cap NRec(f)\subseteq\mathscr{C}(100000)$$		
and $$\inf\{h_\mu (f):  \mu\in K_i\}>h_{top}(f,\Theta)-\eta>h_{top}(f)-2\eta.$$

Given a nonempty open set $U\subset X$, since $(X,f)$ is topologically transitive and $\Theta\subsetneq X$, by Lemma \ref{lemma-B}, there is a nonempty open set $V$ in $X$ such that $V\subseteq U$ and $V\cap \Theta=\emptyset.$ For any $i\in\{1,2,\cdots6,1',2',\cdots,6'\}$, by Lemma \ref{lem-shadowing-s-limit-shadowing}, Lemma \ref{lem-shadowing-s-limit-shadowing-2} and Theorem \ref{G-K-star}, we have that $$h_{top} (f,  G_{K_i}^{\Lambda}\cap V)=h_{top} (f,  G_{K_i}^{\Theta}\cap V)=\inf\{h_\mu (f):  \mu\in K_i\}>h_{top}(f)-2\eta.$$
Since $V\cap\Theta=\emptyset$ and $V\subseteq U$, we have that $$G_{K_i}^\Lambda\cap V\subset G_{K_i}^\Lambda\cap NRec(f)\cap U$$ and $$G_{K_i}^\Theta\cap V\subset G_{K_i}^\Theta\cap NRec(f)\cap U.$$ Hence, for any $1\alpha_2\alpha_3\alpha_4\alpha_50$, we have that $$h_{top}(f,NRec(f)\cap\mathscr{C}_i\cap U)> h_{top}(f)-2\eta.$$ Finally, by the arbitrariness of $\eta$, we finish the proof. \qed

\subsection{Proof of Theorem \ref{Theorem B}: \textbf{Case} $(0\alpha_2\alpha_3\alpha_4\alpha_50)$} To give the proof, we will construct a dynamical system with large topological entropy, having a unique minimal subsystem $Y$, and three invariant measures $\mu_1$, $\mu_2$ and $\mu_3$ such that $S_{\mu_i}$ is internally chain transitive and $Y\subsetneq S_{\mu_1}\subsetneq S_{\mu_2}\subsetneq S_{\mu_3}$.  Firstly, we will construct such dynamical systems in full shifts.  Let's recall basic notations. A word $A=a_1a_2\cdots a_s$ is said to be appeared in a word $B=b_1b_2\cdots b_t$, denoted by $a_1a_2\cdots a_s\prec b_1b_2\cdots b_t$ or $A\prec B$ if there exists $1\leq l\leq t-s+1$ such that $a_1a_2\cdots a_s=b_lb_{l+1}\cdots b_{l+s-1}$. We use $AB$ to denote $a_1a_2\cdots a_sb_1b_2\cdots b_t$ and $A^m$ to denote $A^{m-1}A$ for $m\geq2$. Given $x\in(\Sigma_m^+,\sigma)$ (resp. $(\Sigma_m,\sigma)$), a word $A=a_1a_2\cdots a_s$ is said to be appeared in $x$ if there exists $j\in\mathbb{Z}^+$ (resp. $\mathbb{Z}$) such that $A\prec x_jx_{j+1}\cdots x_{j+s-1}$, we also say that $A$ is a word of $x$. The length of a word $A=a_1a_2\cdots a_s$ is denoted as $|A|=s$.
The following lemma is due to the definition of almost periodic points. 
\begin{Lem}\cite[Theorem 26.7]{DGS}\label{Lemma 6.13}
	Given $x\in(\Sigma_m^+,\sigma)$ (resp. $(\Sigma_m,\sigma)$), then $x$ is almost periodic if and only if for any $j\geq0$, there exists $N\geq1$ such that for any $q\geq 0$ (resp. $q\in\mathbb{Z}$), $x_0x_1\cdots x_j\prec x_qx_{q+1}\cdots x_{q+N-1}$ (resp. $x_{-j}\cdots x_{-1}x_0x_1\cdots x_j\prec x_qx_{q+1}\cdots x_{q+N-1}$).
\end{Lem}

\begin{Lem}\label{Lemma-unique-minimal}
	Given an integer $m>1$, a almost periodic point $z$ of $(\Sigma_m^+,\sigma)$ (resp. $(\Sigma_m,\sigma)$)and a recurrent point $x$ of $(\Sigma_m^+,\sigma)$ satisfying that for any $j\geq0$, there exists $N\geq 1$ such that for any $m\geq0$, we have $$z_0z_1\cdots z_j\prec x_{m}x_{m+1}\cdots x_{m+N-1}.$$ 
	$$(\text{resp. }z_{-j}\cdots z_{-1}z_0z_1\cdots z_j\prec x_{m}x_{m+1}\cdots x_{m+N-1}.)$$Then $(\overline{\operatorname{orb}(x,\sigma)},\sigma)$ has a unique minimal subsystem $(\overline{\operatorname{orb}(z,\sigma)},\sigma)$.
\end{Lem}
\begin{proof}
	Given $j\geq 0$ and $y\in\overline{\operatorname{orb}(x,\sigma)}$ (resp. $(\Sigma_m,\sigma)$), by assumptions, there exists there exists $N\geq 1$ such that for any $m\geq0$, we have $$z_0z_1\cdots z_j\prec x_{m}x_{m+1}\cdots x_{m+N-1}.$$ 
	$$(\text{resp. }z_{-j}\cdots z_{-1}z_0z_1\cdots z_j\prec x_{m}x_{m+1}\cdots x_{m+N-1}.)$$Since $y\in\overline{\operatorname{orb}(x,\sigma)}$, there exists $L\geq 0$ such that $$y_0y_1\cdots y_{N-1}=x_Lx_{L+1}\cdots x_{L+N-1}.$$ Hence, $$z_0z_1\cdots z_j\prec y_{0}y_{1}\cdots y_{N-1}.$$ 
	$$(\text{resp. }z_{-j}\cdots z_{-1}z_0z_1\cdots z_j\prec y_{0}y_{1}\cdots y_{N-1}.)$$ By the arbitrariness of $j$, one has $z\in\overline{\operatorname{orb}(y,\sigma)}$ and thus $\overline{\operatorname{orb}(z,\sigma)}\subset\overline{\operatorname{orb}(y,\sigma)}$. Therefore, $(\overline{\operatorname{orb}(x,\sigma)},\sigma)$ has a unique minimal subsystem $(\overline{\operatorname{orb}(z,\sigma)},\sigma)$.
\end{proof}

\begin{Lem}\label{Theorem-unique-minimal}
	Given an integer $m>1$, a almost periodic point $z$ of $(\Sigma_m^+,\sigma)$ (resp. $(\Sigma_m,\sigma)$) and a point $x$ of $(\Sigma_m^+,\sigma)$ (resp. $(\Sigma_m,\sigma)$) satisfying that for any $j\geq0$, there exists $N\geq 1$ such that for any $m\geq0$, we have $$z_0z_1\cdots z_j\prec x_{m}x_{m+1}\cdots x_{m+N-1}.$$ 
	$$(\text{resp. }z_{-j}\cdots z_{-1}z_0z_1\cdots z_j\prec x_{m}x_{m+1}\cdots x_{m+N-1}.)$$ Then there exists a recurrent point $y$ of $(\Sigma_m^+,\sigma)$ (resp. $(\Sigma_m,\sigma)$) such that 
	\begin{enumerate}[(1)]
		\item $\overline{\operatorname{orb}(x,\sigma)}\subsetneq\omega_\sigma(y)=C_y^*$;
		\item for any $j\geq0$, there exists $L\geq 1$ such that for any $m\geq0$, we have $$z_0z_1\cdots z_j\prec y_{m}y_{m+1}\cdots y_{m+L-1}.$$ 
		$$(\text{resp. }z_{-j}\cdots z_{-1}z_0z_1\cdots z_j\prec y_{m}y_{m+1}\cdots y_{m+L-1}.)$$ 
	\end{enumerate}
\end{Lem} 
\begin{proof}
	By Lemma \ref{Lemma-unique-minimal}, $(\overline{\operatorname{orb}(x,\sigma)},\sigma)$ has a unique minimal subsystem $(\overline{\operatorname{orb}(z,\sigma)},\sigma)$ and thus we can choose a word $C_0=a_0\cdots a_{m_0-1}$ for some $m_0\in\mathbb{N}$ such that $C_0$ do not appear in $x$. For any $k\in\mathbb{N}$, denote $$B_k=x_0x_1\cdots x_{k-1},$$ then $|B_k|=k$. Given a sequence $\{b_n\}_{n=1}^\infty$ of $\mathbb{N}$, which will be determined later, denote $$C_1=(C_0C_0B_1)^{b_1}C_0C_0, C_2=(C_1C_1B_2)^{b_2}C_1C_1\text{ and }C_{n+1}=(C_nC_nB_{n+1})^{b_{n+1}}C_nC_n$$ for any $n\in\mathbb{Z}^+$. Now, we choose $\{b_n\}_{n=1}^\infty$ such that $$\frac{|B_n|}{|C_{n-1}|}\leq\frac{1}{2^{n-1}} \text{ for any }n\in\mathbb{N}^+.$$ Given $m\in\mathbb{Z}^+,n\in\mathbb{N}$, denote $\gamma_{m,n}$ the frequency of the word $C_mC_m$ appearing in the word $C_{m+n}$, which is defined as $\gamma_{m,n}=\frac{p_{m,n}}{|C_{m+n}|}$, where $p_{m,n}$ is the number of times of the word $C_mC_m$ appearing in the word $C_{m+n}$. Then $\gamma_{m,1}\geq\frac{b_{m+1}+1}{|C_{m+1}|}>0$ and for any $m\geq 0$ and $n\geq2$, we have that 
	\begin{align*}
		\gamma_{m,n}&\geq\frac{2(b_{m+n}+1)|C_{m+n-1}|}{b_{m+n}(2|C_{m+n-1}|+|B_{m+n}|)+2|C_{m+n-1}|}\gamma_{m,n-1}\\&\geq\frac{1}{1+\frac{|B_{m+n}|}{2|C_{m+n-1}|}}\gamma_{m,n-1}\\&\geq\frac{1}{\prod_{i=1}^\infty(1+\frac{1}{2^i})}\gamma_{m,1}\\&>0.
	\end{align*}
	$$$$
	We define $$y=y_0y_1\cdots\text{ as } y_0y_1\cdots y_{|C_n|-1}=C_n\text{ for any }n\in\mathbb{N}^+$$ $$(\text{resp. } y=\cdots y_{-1}y_{0}y_1\cdots\text{ as } y_{-|C_n|}\cdots y_{-1}y_0y_1\cdots y_{|C_n|-1}=C_nC_n\text{ for any }n\in\mathbb{N}^+)$$ and the cylinder $$[C_nC_n]=\{w\in\Sigma_m^+:w_0w_1\cdots w_{2|C_n|-1}=C_nC_n\}.$$ $$(\text{resp. }[C_nC_n]=\{w\in\Sigma_m:w_{-|C_n|}\cdots w_{-1}w_0w_1\cdots w_{|C_n|-1}=C_nC_n\}.)$$Then $y$ is a recurrent point and $\overline{\operatorname{orb}(x,\sigma)}\subsetneq\omega_\sigma(y)$. For any $m\geq 0$, we have  $$\liminf_{n\to\infty}\frac{1}{|C_n|}\sum_{i=0}^{|C_n|-1}\delta_{\sigma^i(y)}([C_mC_m])\geq\frac{1}{\prod_{i=1}^\infty(1+\frac{1}{2^i})}\gamma_{m,1}>0.$$ As a result, we have that $y\in C_y^*$ and thus $\omega_\sigma(y)=C_y^*$. 
	
	Now, given $j\geq 0$, there exists $N\geq 1$ such that for any $m\geq0$, we have $$z_0z_1\cdots z_j\prec x_{m}x_{m+1}\cdots x_{m+N-1}.$$ 
	$$(\text{resp. }z_{-j}\cdots z_{-1}z_0z_1\cdots z_j\prec x_{m}x_{m+1}\cdots x_{m+N-1}.)$$ Denote $L=|C_N|$, from the definition of $y$, for any $m\geq 0$, the word $y_my_{m+1}\cdots y_{m+L-1}$ contains a word of $x$ with length $|B_N|=N$ and thus $$z_0z_1\cdots z_j\prec y_{m}y_{m+1}\cdots y_{m+L-1}.$$ 
	$$(\text{resp. }z_{-j}\cdots z_{-1}z_0z_1\cdots z_j\prec y_{m}y_{m+1}\cdots y_{m+L-1}.)$$ 
\end{proof}
\begin{Lem}\label{Lemma 7.15}
	Given an integer $m>1$ and $\alpha<\log m$, there exists a subsystem $(\Lambda,\sigma)$ of $(\Sigma_m^+,\sigma)$ (resp. $(\Sigma_m,\sigma)$) such that
	\begin{enumerate}[(1)]
		\item $(\Lambda,\sigma)$ has a unique minimal subsystem $(\Delta,\sigma)$ with $\Delta\subsetneq\Lambda$;
		\item $h_{top}(\sigma,\Lambda)\geq h_{top}(\sigma,\Delta)>\log m-\alpha$;
		\item there exist three invariant measures of $(\Lambda,\sigma)$, denoted by $\mu_1$, $\mu_2$, $\mu_3$, such that $$\Delta\subsetneq S_{\mu_1}\subsetneq S_{\mu_2}\subsetneq S_{\mu_3}=\Lambda\subsetneq\Sigma_m^+~(\text{resp. }\Sigma_m);$$
		\item $S_{\mu_1}$, $S_{\mu_2}$ and $S_{\mu_3}$ are all transitive;
		\item $\min\{h_{\mu_1},h_{\mu_2},h_{\mu_3}\}>\log m-\alpha$.
	\end{enumerate}
\end{Lem}
\begin{proof}
	Given $0<\alpha<\log m$, since $(\Sigma_m^+,\sigma)$ (resp. $(\Sigma_m,\sigma)$) satisfies the shadowing property, by Lemma \ref{Lemma 6.5}, there exists a minimal subsystem $(\Delta,\sigma)$ such that $h_{top}(\sigma,\Delta)>h_{top}(\sigma)-\alpha=\log m-\alpha$. Choose $x\in\Delta$, then by Lemma \ref{Lemma 6.13}, for any $j\geq0$, there exists $N\geq 1$ such that for any $m\geq0$, we have $$x_0x_1\cdots x_j\prec x_{m}x_{m+1}\cdots x_{m+N-1}.$$ 
	$$(\text{resp. }x_{-j}\cdots x_{-1}x_0x_1\cdots x_j\prec x_{m}x_{m+1}\cdots x_{m+N-1}.)$$ By Lemma \ref{Theorem-unique-minimal} and Lemma \ref{Lemma-unique-minimal}, we can find three recurrent points $y_1,y_2,y_3$ such that $$\Delta=\overline{\operatorname{orb}(x,\sigma)}\subsetneq\omega_\sigma(y_1)=C_{y_1}^*\subsetneq\omega_\sigma(y_2)=C_{y_2}^*\subsetneq\omega_\sigma(y_3)=C_{y_3}^*$$
	and $\Lambda:=C_{y_3}^*$ has a unique minimal subsystem $\Delta$. It is clear that  $\Lambda\subsetneq\Sigma_m^+~(\text{resp. }\Sigma_m)$. Since $y_1,y_2,y_3$ are all recurrent, we have that $C_{y_1}^*$, $C_{y_2}^*$ and $C_{y_3}^*$ are all transitive. By Lemma \ref{Lem-IrregularMultianalysis-11111}, we can find $\mu_1\in M(f,C_{y_1}^*)$, $\mu_2\in M(f,C_{y_2}^*)$ and $\mu_3\in M(f,C_{y_3}^*)$ such that $S_{\mu_1}=C_{y_1}^*$, $S_{\mu_2}=C_{y_2}^*$, $S_{\mu_3}=C_{y_3}^*$ and $$\min\{h_{\mu_1},h_{\mu_2},h_{\mu_3}\}>\log m-\alpha.$$
\end{proof}

\begin{Lem}\label{Lemma-iteration-unique-minimal}
	Suppose that $(X,f)$ is a dynamical system, $n\geq1$ and $(\Delta_1,f^n)$ has a unique minimal subsystem $(\Delta_2,f^n)$, then $(\bigcup_{i=0}^{n-1}f^i(\Delta_1),f)$ has a unique minimal subsystem $(\bigcup_{i=0}^{n-1}f^i(\Delta_2),f)$.
\end{Lem}
\begin{proof}
	Denote $\Lambda_1=\bigcup_{i=0}^{n-1}f^i(\Delta_1)$ and $\Lambda_2=\bigcup_{i=0}^{n-1}f^i(\Delta_2)$. Given $x\in\Lambda_1$, then there exists $0\leq s\leq k-1$ such that $f^s(x)\in\Delta_1$. Since $(\Delta_1,f^n)$ has a unique minimal subsystem $(\Delta_2,f^n)$, we have that $$\Delta_2\subset\overline{\operatorname{orb}(f^s(x),f^n)}.$$ As a result, $$\Lambda_2=\bigcup_{i=0}^{n-1}f^i(\Delta_2)\subset\bigcup_{i=0}^{n-1}f^i(\overline{\operatorname{orb}(f^s(x),f^n)})\subset\bigcup_{i=0}^{n-1}\overline{f^i(\operatorname{orb}(f^s(x),f^n))}\subset\overline{\bigcup_{i=0}^{n-1}f^i(\operatorname{orb}(f^s(x),f^n))}\subset\overline{\operatorname{orb}(x,f)}.$$ Therefore, $(\Lambda_1,f)$ has a unique minimal subsystem $(\Lambda_2,f)$.
\end{proof}

\begin{Lem}\label{Lemma 7.17}
	Suppose that $(X,f)$ is a dynamical system, $n\geq 1$, $\emptyset\neq\Delta\subset X$ is compact, $f^n$-invariant. We define a map $\mathfrak{F}:M(f^n,\Delta)\to\mathcal{M}(f,\Lambda)$ by $$\mathfrak{F}(\mu)=\frac{1}{n}\sum_{i=0}^{n-1}\mu\circ f^{-i},$$where $\Lambda=\bigcup_{i=0}^{n-1}f^i(\Delta)$. Then for any $\mu\in M(f^n,\Delta)$, we have
	\begin{enumerate}[(1)]
		\item $h_{\mathfrak{F}(\mu)}(f)=\frac{1}{n}h_{\mathfrak{F}(\mu)}(f^n)=\frac{1}{n}h_\mu(f^n)$;
		\item $S_{\mathfrak{F}(\mu)}=\bigcup_{i=0}^{n-1}f^i(S_\mu)$. In particular, if $S_\mu$ is $f^n$-transitive, then $S_{\mathfrak{F}(\mu)}$ is $f$-transitive.
	\end{enumerate}
\end{Lem}
\begin{proof}
	Item (1) is directly from \cite[Lemma 18.5]{DGS} and the fact that $h_\nu(f^n)=nh_\nu(f)$ for any $nu\in M(f,X)$. For item (2), given $\mu\in M(f^n,\Delta)$, on the one hand, since $S_\mu\subseteq S_{\mathfrak{F}(\mu)}$ and $S_{\mathfrak{F}(\mu)}$ is $f$-invariant, we have that $\bigcup_{i=0}^{n-1}f^i(S_\mu)\subseteq S_{\mathfrak{F}(\mu)}$. On the other hand, since $\mathfrak{F}(\mu)(\bigcup_{i=0}^{n-1}f^i(S_\mu))=1$ and $\bigcup_{i=0}^{n-1}f^i(S_\mu)$ is compact, we have that $S_{\mathfrak{F}(\mu)}\subseteq\bigcup_{i=0}^{n-1}f^i(S_\mu)$. As a result, $S_{\mathfrak{F}(\mu)}=\bigcup_{i=0}^{n-1}f^i(S_\mu)$. Now, suppose that $S_\mu$ is $f^n$-transitive, then there exists $x\in S_\mu$ with $\omega_{f^n}(x)=S_\mu$. Hence, for any $0\leq i\leq n-1$, $$f^i(S_\mu)=f^i(\omega_{f^n}(x))\subseteq\omega_{f^n}(f^ix)\subseteq\omega_f(x)\subseteq S_{\mathfrak{F}(\mu)}.$$
	As a result, $\omega_f(x)=S_{\mathfrak{F}(\mu)}$ and thus $S_{\mathfrak{F}(\mu)}$ is $f$-transitive.
\end{proof}
Next, we generalize Lemma \ref{Lemma 7.15} from full shifts to topologically transitive topologically expanding or topologically transitive topologically Anosov dynamical system.
\begin{mainlemma}\label{Lemma C}
	Suppose that $(X,f)$ is topologically transitive topologically expanding or topologically transitive topologically Anosov. Then for any $0<\alpha<h_{top}(f)$, there exists a subsystem $(\Lambda,f)$ such that 
	\begin{enumerate}[(1)]
		\item $(\Lambda,f)$ has a unique minimal subsystem $(\Delta,f)$ with $\Delta\subsetneq\Lambda$;
		\item $h_{top}(f,\Lambda)\geq h_{top}(f,\Delta)>h_{top}(f)-\alpha$;
		\item there exist three invariant measures of $(\Lambda,f)$, denoted by $\mu_1$, $\mu_2$, $\mu_3$, such that $$\Delta\subsetneq S_{\mu_1}\subsetneq S_{\mu_2}\subsetneq S_{\mu_3}=\Lambda\subsetneq X;$$
		\item $S_{\mu_1}$, $S_{\mu_2}$ and $S_{\mu_3}$ are all transitive;
		\item $\min\{h_{\mu_1},h_{\mu_2},h_{\mu_3}\}>h_{top}(f)-\alpha$.
	\end{enumerate}
\end{mainlemma}
\begin{proof}
	Suppose that $(X,  f)$ is topologically transitive topologically expanding (resp. topologically transitive topologically Anosov). Given $0<\alpha<h_{top}(f)$, then by Lemma \ref{Lemma 6.9}, there are $m,  k\in \mathbb{N}$,   $\frac{\log m}{k}>h_{top}(f)-\frac{\alpha}{2}$ and a closed set $\Lambda'\subset X$
	invariant under $f^k$ such that 
	\begin{enumerate}[(1)]
		\item $\bigcup_{i=0}^{k-1}f^i(\Lambda')\subsetneq X$ and $f^i(\Lambda')\cap f^j(\Lambda')$ for any $0\leq i<j\leq k-1$;
		\item there is a conjugate map $$\pi: (\Lambda',   f^k)\to (\Sigma_{m}^+,  \sigma)\text{ }(\text{resp. }(\Sigma_{m},  \sigma)).$$
	\end{enumerate}
Choose $\tilde{\alpha}<\log m$ such that $\frac{\log m-\tilde{\alpha}}{k}>h_{top}(f)-\alpha$. By Lemma \ref{Lemma 7.15}, there exists a subsystem $(\tilde{\Lambda},\sigma)$ of $(\Sigma_m^+,\sigma)$ (resp. $(\Sigma_m,\sigma)$) such that
\begin{enumerate}[(1)]
	\item $(\tilde{\Lambda},\sigma)$ has a unique minimal subsystem $(\tilde{\Delta},\sigma)$ with $\tilde{\Delta}\subsetneq\tilde{\Lambda}$;
	\item $h_{top}(\sigma,\tilde{\Lambda})\geq h_{top}(\sigma,\tilde{\Delta})>\log m-\tilde{\alpha}$;
	\item there exist three invariant measures of $(\tilde{\Lambda},\sigma)$, denoted by $\nu_1$, $\nu_2$, $\nu_3$, such that $$\tilde{\Delta}\subsetneq S_{\nu_1}\subsetneq S_{\nu_2}\subsetneq S_{\nu_3}=\tilde{\Lambda}\subsetneq\Sigma_m^+~(\text{resp. }\Sigma_m);$$
	\item $S_{\nu_1}$, $S_{\nu_2}$ and $S_{\nu_3}$ are all transitive;
	\item $\min\{h_{\nu_1},h_{\nu_2},h_{\nu_3}\}>\log m-\tilde{\alpha}$.
\end{enumerate}
As a result, there exists a subsystem $(\Lambda_1,f^k)$ of $(\Lambda',f^k)$ such that
\begin{enumerate}[(1)]
	\item $(\Lambda_1,f^k)$ has a unique minimal subsystem $(\Delta_1,f^k)$ with $\Delta_1\subsetneq\Lambda_1$;
	\item $h_{top}(f^k,\Lambda_1)\geq h_{top}(f^k,\Delta_1)>\log m-\tilde{\alpha}$;
	\item there exist three invariant measures of $(\Lambda_1,f^k)$, denoted by $\omega_1$, $\omega_2$, $\omega_3$, such that $$\Delta_1\subsetneq S_{\omega_1}\subsetneq S_{\omega_2}\subsetneq S_{\omega_3}=\Lambda_1\subsetneq\Lambda';$$
	\item $S_{\omega_1}$, $S_{\omega_2}$ and $S_{\omega_3}$ are all transitive;
	\item $\min\{h_{\omega_1}(f^k),h_{\omega_2}(f^k),h_{\omega_3}(f^k)\}>\log m-\tilde{\alpha}$.
\end{enumerate}
Denote $\Lambda=\bigcup_{i=0}^{k-1}f^i(\Lambda_1)$ and $\Delta=\bigcup_{i=0}^{k-1}f^i(\Delta_1)$. Then by Lemma \ref{Lemma-basic-property-Bowen-entropy-1}, $$h_{top}(f,\Lambda)\geq h_{top}(f,\Delta)\geq h_{top}(f,\Delta_1)=\frac{1}{k}h_{top}(f^k,\Delta_1)>\frac{\log m-\tilde{\alpha}}{k}>h_{top}(f)-\alpha$$ and  by Lemma \ref{Lemma-iteration-unique-minimal}, $(\Lambda,\sigma)$ has a unique minimal subsystem $(\Delta,\sigma)$ with $\Delta\subsetneq\Lambda$. Finally, we denote $\mu_1=\mathfrak{F}(\omega_{1})$, $\mu_2=\mathfrak{F}(\omega_{2})$  $\mu_3=\mathfrak{F}(\omega_{3})$, then by Lemma \ref{Lemma 7.17}, we have
\begin{enumerate}[(1)]
	\item $\Delta\subsetneq S_{\mu_1}\subsetneq S_{\mu_2}\subsetneq S_{\mu_3}=\Lambda\subsetneq X$;
	\item $S_{\mu_1}$, $S_{\mu_2}$ and $S_{\mu_3}$ are all transitive;
	\item $\min\{h_{\mu_1},h_{\mu_2},h_{\mu_3}\}>\frac{\log m-\tilde{\alpha}}{k}>h_{top}(f)-\alpha$.
\end{enumerate}
\end{proof}

Now, we give the proof of Theorem \ref{Theorem B}: \textbf{Case} $(0\alpha_2\alpha_3\alpha_4\alpha_50)$.

\emph{Proof of Theorem \ref{Theorem B}: \textbf{Case} $(0\alpha_2\alpha_3\alpha_4\alpha_50)$.}  Let $U\subseteq X$ be a nonempty open set. Given $0<\alpha<h_{top}(f)$, by Lemma \ref{Lemma C}, there exists a subsystem $(\Lambda,f)$ such that 
\begin{enumerate}[(1)]
	\item $(\Lambda,f)$ has a unique minimal subsystem $(\Delta,f)$ with $\Delta\subsetneq\Lambda$;
	\item $h_{top}(f,\Lambda)\geq h_{top}(f,\Delta)>h_{top}(f)-\alpha$;
	\item there exist three invariant measures of $(\Lambda,f)$, denoted by $\mu_1$, $\mu_2$, $\mu_3$, such that $$\Delta\subsetneq S_{\mu_1}\subsetneq S_{\mu_2}\subsetneq S_{\mu_3}=\Lambda\subsetneq X;$$
	\item $S_{\mu_1}$, $S_{\mu_2}$ and $S_{\mu_3}$ are all transitive;
	\item $\min\{h_{\mu_1},h_{\mu_2},h_{\mu_3}\}>h_{top}(f)-\alpha$.
\end{enumerate}
By Lemma \ref{maintheorem-abstract-4}, there exist internally chain transitive closed invariant subset $\Theta_0,\Theta_1,\Theta_2,\Theta_3\subsetneq X$ such that 
\begin{enumerate}[(1)]
	\item $\Delta\subsetneq \Theta_0$ and $M(f, \Theta_0)=M(f,  \Delta)$;
	\item for any $1\leq i\leq 3$, $S_{\mu_i}\subsetneq \Theta_i$ and $M(f, \Theta_i)=M(f,  S_{\mu_i})$.
\end{enumerate}
Since $(X,f)$ is topologically transitive, by Lemma \ref{lemma-B}, there are nonempty open sets $V_i$ in $X$ such that $V_i\subseteq U$ and $V_i\cap \Theta_i=\emptyset.$ for any $0\leq i\leq 3$. By the variational principle, we can choose $\mu_0\in M(f,\Delta)$ with $h_{\mu_0}>h_{top}(f)-\alpha$, then $S_{\mu_0}=\Delta$. Then we have
\begin{enumerate}[(1)]
	\item $\{x\in G_{\mu_0}:\omega_f(x)=\Delta\}\cap V_0\subset\mathscr{C}(011110)\cap U$ and $\{x\in G_{\mu_0}:\omega_f(x)=\Theta_0\}\cap V_0\subset\mathscr{C}(011100)\cap U$;
	\item $\{x\in G_{\mu_1}:\omega_f(x)=S_{\mu_1}\}\cap V_1\subset\mathscr{C}(001110)\cap U$ and $\{x\in G_{\mu_1}:\omega_f(x)=\Theta_1\}\cap V_1\subset\mathscr{C}(001100)\cap U$;
	\item $\{x\in G_{\mu_1}:\omega_f(x)=S_{\mu_2}\}\cap V_2\subset\mathscr{C}(001010)\cap U$ and $\{x\in G_{\mu_1}:\omega_f(x)=\Theta_2\}\cap V_2\subset\mathscr{C}(001000)\cap U$;
	\item $$\{x\in G_{\operatorname{cov}\{\mu_0,\mu_1\}}:\omega_f(x)=S_{\mu_1}\}\cap V_1\subset\mathscr{C}(010110)\cap U$$ and $$\{x\in G_{\operatorname{cov}\{\mu_0,\mu_1\}}:\omega_f(x)=\Theta_1\}\cap V_1\subset\mathscr{C}(010100)\cap U;$$
	\item $$\{x\in G_{\operatorname{cov}\{\mu_1,\mu_2\}}:\omega_f(x)=S_{\mu_2}\}\cap V_2\subset\mathscr{C}(000110)\cap U$$ and $$\{x\in G_{\operatorname{cov}\{\mu_1,\mu_2\}}:\omega_f(x)=\Theta_2\}\cap V_2\subset\mathscr{C}(000100)\cap U;$$
	\item $$\{x\in G_{\operatorname{cov}\{\mu_0,\mu_1\}}:\omega_f(x)=S_{\mu_2}\}\cap V_2\subset\mathscr{C}(010010)\cap U$$ and $$\{x\in G_{\operatorname{cov}\{\mu_0,\mu_1\}}:\omega_f(x)=\Theta_2\}\cap V_2\subset\mathscr{C}(010000)\cap U;$$
	\item $$\{x\in G_{\operatorname{cov}\{\mu_1,\mu_2\}}:\omega_f(x)=S_{\mu_3}\}\cap V_3\subset\mathscr{C}(000010)\cap U$$ and $$\{x\in G_{\operatorname{cov}\{\mu_1,\mu_2\}}:\omega_f(x)=\Theta_3\}\cap V_3\subset\mathscr{C}(000000)\cap U;$$
	\item $\{x\in G_{\mu_0}:\omega_f(x)=S_{\mu_1}\}\cap V_1\subset\mathscr{C}(011010)\cap U$ and $\{x\in G_{\mu_0}:\omega_f(x)=\Theta_1\}\cap V_1\subset\mathscr{C}(011000)\cap U$.
\end{enumerate}By Lemma \ref{lem-shadowing-s-limit-shadowing}, Lemma \ref{lem-shadowing-s-limit-shadowing-2} and Theorem \ref{G-K-star}, for any $0\alpha_2\alpha_3\alpha_4\alpha_50\in\mathfrak{A}$, we have that $$h_{top}(f,\mathscr{C}(0\alpha_2\alpha_3\alpha_4\alpha_50)\cap U)>h_{top}(f)-\alpha.$$ Finally, by the arbitrariness of $\alpha$, we finish the proof. \qed

\subsection{Proof of Theorem \ref{Theorem B}: \textbf{Case} $(011111)$}Suppose that $(X,  f)$ is topologically transitive topologically expanding (resp. topologically transitive topologically Anosov). Given a nonempty open set $U\subset X$ and  $0<\alpha<h_{top}(f)$, then by Lemma \ref{Lemma 6.9}, there are $m,  k\in \mathbb{N}$,   $\frac{\log m}{k}>h_{top}(f)-\alpha$ and a closed set $\Lambda\subset X$
invariant under $f^k$ such that 
\begin{enumerate}[(1)]
	\item $\Lambda\subseteq U$, $\bigcup_{i=0}^{k-1}f^i(\Lambda)\subsetneq X$ and $f^i(\Lambda)\cap f^j(\Lambda)$ for any $0\leq i<j\leq k-1$;
	\item there is a conjugate map $$\pi: (\Lambda,   f^k)\to (\Sigma_{m}^+,  \sigma)\text{ }(\text{resp. }(\Sigma_{m},  \sigma)).$$
\end{enumerate}
By Lemma \ref{Lemma 6.5}, one has $h_{top}(f^k, AP(f^k)\cap\Lambda)=h_{top}(f^k,\Lambda)=\log m$. As a result, 
\begin{align*}
	h_{top}(f, \mathscr{C}(011111)\cap U)&=h_{top}(f, AP(f)\cap U)=h_{top}(f, AP(f^k)\cap U)\\ &\geq h_{top}(f, AP(f^k)\cap \Lambda)=\frac{1}{k}h_{top}(f^k, AP(f^k)\cap\Lambda)\\ &=\frac{\log m}{k}>h_{top}(f)-\alpha.
\end{align*}
Finally, by the arbitrariness of $\alpha$, we finish the proof. \qed

\section{Lebesgue measure of \texorpdfstring{$\mathscr{C}(\alpha_1\alpha_2\cdots\alpha_6)$}{C(alpha1alpha2...alpha6)}: proof of Theorem \ref{Theorem D}}\label{section leb}

\subsection{ Rotation on $\mathbb{S}^1$}
Consider the circle  $\mathbb{S}^1= \mathbb{R} / \mathbb{Z} .$ Given $\alpha\in\mathbb{R}$ let $f_\alpha:\mathbb{S}^1\to\mathbb{S}^1$ denote the rotation of angle $\alpha$ given by $$f_\alpha(x)=x+\alpha ~(\mathrm{mod} ~1)$$ for every $x\in\mathbb{S}^1.$ 

\begin{Thm}\label{Thm-rotation}
	For every rotation $f_\alpha:\mathbb{S}^1\to\mathbb{S}^1,$  we have $\mathbb{S}^1\subseteq \mathscr{C}(011111)$  and thus $\mathscr{C}(011111)$ has full Lebesgue measure.
\end{Thm}
\begin{proof}
	When $\alpha\in\mathbb{R}\setminus\mathbb{Q},$ $f_\alpha$ is a irrational rotation.  It is well known that every irrational rotation is minimal and thus $x\in \mathscr{C}(011111)$ for  any   $x\in \mathbb{S}^1.$
	When $\alpha\in \mathbb{Q},$ every $x\in\mathbb{S}^1$ is periodic and thus  $x\in \mathscr{C}(011111)$.  
\end{proof}

\subsection{Morse-Smale diffeomorphism}
A diffeomorphism $f:M\to M$ is said to be a Morse-Smale diffeomorphism if the following conditions are satisfied:
\begin{enumerate}
	\item the non-wandering set $\Omega_f$ is finite and hence consists only of periodic points;
	\item all periodic points are hyperbolic;
	\item for any $p,q\in\Omega_f,$ the stable manifold of $p$ and the unstable manifold of $p$ are transversal.
\end{enumerate}
\begin{Thm}
	For every Morse-Smale diffeomorphism $f:M\to M,$  we have $M\setminus \Omega_f\subseteq  \mathscr{C}(011110)$ and thus $\mathscr{C}(011110)$ has full Lebesgue measure.
\end{Thm}
\begin{proof}
	The definition of Morse-Smale diffeomorphism implies that for any $x\in M$, $\omega_f(x)=\operatorname{orb}(p,f)$ for some $p\in \Omega_f.$ Note that if $x\in M\setminus \Omega_f$, then $\omega_{f}(x)\subsetneq\overline{\operatorname{orb}(x,f)},$ and thus $$\emptyset\subsetneq \omega_{\underline{B}}(x)= \omega_{\underline{d}}(x)= \omega_{\overline{d}}(x)= \omega_{\overline{B}}(x)= \omega_{f}(x)=\operatorname{orb}(p,f)\subsetneq \overline{\operatorname{orb}(x,f)}$$ for some $p\in \Omega_f,$ which implies $x \in \mathscr{C}(011110)$. 
\end{proof}

\subsection{Bowen's eye and smooth reparameterizations of irrational linear ﬂows of the two torus with two stopping points}
\begin{figure}[h]\caption{Bowen’s eye}\label{fig-2}
	\begin{center}

	\tikzset{every picture/.style={line width=0.75pt}} %set default line width to 0.75pt        
	
	\begin{tikzpicture}[x=0.75pt,y=0.75pt,yscale=-1,xscale=1]
		%uncomment if require: \path (0,300); %set diagram left start at 0, and has height of 300
		
		%Curve Lines [id:da3772990847050799] 
		\draw    (171,202) .. controls (248,60) and (400,59) .. (493,199) ;
		%Curve Lines [id:da19141519849322441] 
		\draw    (170,112) .. controls (275,281) and (378,287) .. (489,115) ;
		%Curve Lines [id:da2599024568798727] 
		\draw    (318,156) .. controls (343,154) and (350,206) .. (307,189) ;
		%Curve Lines [id:da6263107590855161] 
		\draw    (307,189) .. controls (258,163) and (296,107) .. (341,137) ;
		%Curve Lines [id:da8257430819318925] 
		\draw    (341,137) .. controls (383,166) and (374,218) .. (324,216) ;
		%Curve Lines [id:da6619473626001922] 
		\draw    (324,216) .. controls (197,215) and (217,64) .. (384,115) ;
		\draw   (296.59,125.92) .. controls (304.34,127.91) and (311.7,128.4) .. (318.67,127.39) .. controls (312.09,129.89) and (305.9,133.91) .. (300.1,139.41) ;
		\draw   (249.01,196.57) .. controls (249.11,189.55) and (247.87,183.5) .. (245.26,178.42) .. controls (249.23,182.52) and (254.56,185.64) .. (261.24,187.79) ;
		\draw   (452.51,175.19) .. controls (445.11,178.24) and (438.92,182.24) .. (433.95,187.23) .. controls (437.72,181.28) and (440.27,174.36) .. (441.63,166.48) ;
		\draw   (221.69,191.44) .. controls (219.67,183.7) and (216.54,177.02) .. (212.27,171.42) .. controls (217.65,175.96) and (224.16,179.43) .. (231.79,181.84) ;
		\draw   (226.91,121.74) .. controls (234.9,121.28) and (242.06,119.51) .. (248.39,116.42) .. controls (242.88,120.81) and (238.21,126.52) .. (234.35,133.53) ;
		\draw   (418.49,114.08) .. controls (421.65,121.44) and (425.75,127.57) .. (430.8,132.47) .. controls (424.8,128.79) and (417.84,126.33) .. (409.94,125.09) ;
		\draw   (171.88,185.89) .. controls (178.99,182.22) and (184.81,177.69) .. (189.34,172.3) .. controls (186.09,178.55) and (184.14,185.66) .. (183.47,193.63) ;
		\draw   (183.39,123.61) .. controls (185.37,131.36) and (188.47,138.05) .. (192.7,143.68) .. controls (187.35,139.11) and (180.85,135.61) .. (173.24,133.16) ;
		\draw   (475.45,185.48) .. controls (473.58,177.7) and (470.58,170.97) .. (466.42,165.28) .. controls (471.71,169.93) and (478.16,173.52) .. (485.74,176.07) ;
		\draw   (488.85,131.48) .. controls (481.17,133.73) and (474.59,137.06) .. (469.11,141.49) .. controls (473.49,135.97) and (476.77,129.36) .. (478.95,121.67) ;
		
		% Text Node
		\draw (175,149.4) node [anchor=north west][inner sep=0.75pt]    {$A$};
		% Text Node
		\draw (468,146.4) node [anchor=north west][inner sep=0.75pt]    {$B$};
		% Text Node
		\draw (312,68.4) node [anchor=north west][inner sep=0.75pt]    {$\Sigma _{1}$};
		% Text Node
		\draw (314,247.4) node [anchor=north west][inner sep=0.75pt]    {$\Sigma _{2}$};

	\end{tikzpicture}
	
	\end{center}
\end{figure}

Let $\Phi=\{\varphi^{t}\}_{t\in\mathbb{R}}$ be a continuous flow on a compact metric space $X.$ Then $\varphi^{t}$ is  the time-t map. We denote the set  of all the $\Phi$-invariant Borel probability measures  by $M(\Phi).$ 
For any point $x\in X$  and $T\geq 0$,   we define the empirical measure of $x$ as
\begin{equation*}
	\mathcal{E}_{T}(x):=\frac{1}{T}\int_{0}^{T}\delta_{\varphi_t(x)}dt.
\end{equation*}
We denote the set of limit points of $\{\mathcal{E}_{T}(x)\}_{T\geq 0}$ by $V_{\Phi}(x)$. For any $\mu\in M(\Phi),$ denote 
$G_{\mu}(\Phi)=\{x\in X:V_\Phi(x)=\{\mu\}\}.$
\begin{Lem}\cite[Proposition 2.2 and Theorem 2.4]{PS2022}\label{Lemma-pacific}
	Let $\Phi=\{\varphi^{t}\}_{t\in\mathbb{R}}$ be a continuous flow on a compact metric space $X.$ 
	\begin{enumerate}
		\item If $\mu\in M(\Phi),$ then $G_{\mu}(\varphi^{t})\subseteq G_{\mu}(\Phi)$ for all $t\in\mathbb{R}\setminus\{0\}.$
		\item For any $t\in\mathbb{R}\setminus\{0\}$ and  $x\in X,$ $\sharp V_{\Phi}(x)=1$ if and only if $\sharp V_{\varphi^t}(x)=1.$
	\end{enumerate}
\end{Lem}
Following the argument of \cite[Proposition 2.2]{PS2022}, we have 
\begin{Lem}\label{Lemma-pacific-2}
	Let $\Phi=\{\varphi^{t}\}_{t\in\mathbb{R}}$ be a continuous flow on a compact metric space $X.$  If $\mu\in M(\Phi)$, $t\in\mathbb{R}\setminus\{0\}$ and $x\in X$, then $\mu\in V_{\varphi^t}(x)$ implies  $\mu\in V_{\Phi}(x).$
\end{Lem}
Next, we prove the following lemma.
\begin{Lem}\label{Lemma-pacific-3}
	Let $\Phi=\{\varphi^{t}\}_{t\in\mathbb{R}}$ be a continuous flow on a compact metric space $X.$  If $p$ is a fixed point of $\Phi,$ then for any $t\in\mathbb{R}$ and $x\in X$, $\delta_p\in V_{\Phi}(x)$ implies $\delta_p\in V_{\varphi^t}(x).$
\end{Lem}
\begin{proof}
	It is enough to verify for $t = 1.$ Assume that there is $x\in X$ such that $\delta_p\in V_{\Phi}(x)$ and $\delta_p\not\in V_{\varphi^1}(x).$ Let $\psi(y)=d(y,p)$ for any $y\in X$. Denote $$\underline{\psi}(x)=\liminf_{n\to\infty}\frac{1}{n}\sum_{j=0}^{n-1}\psi(\varphi^j(x)).$$
	Then $\psi(p)=0<\underline{\psi}(x).$   Take $0<\varepsilon<\frac{\underline{\psi}(x)}{4}.$ Then there is $N\in\mathbb{N}$ such that for any $n\geq N$ one has $\frac{1}{n}\sum_{j=0}^{n-1}\psi(\varphi^j(x))>3\varepsilon.$  Denote $M_\psi=\sup_{x\in X}\psi(x).$ Then we have 
	$$\sharp\{1\leq j\leq n-1: \psi(\varphi^j(x))>2\varepsilon\}>n\frac{\varepsilon}{M-2\varepsilon}.$$
	Take $\delta>0$ such that $|\psi(y)-\psi(\varphi^{t}(y))|<\varepsilon$ for any $y\in X$ and $-\delta<t<\delta.$ 
     Then 
	$$\mathrm{Leb}(\{0\leq t\leq n: \psi(\varphi^t(x))>\varepsilon\})>2\delta n\frac{\varepsilon}{M-2\varepsilon}.$$
	So for any $T\geq N$ we have 
	\begin{equation*}
		\begin{split}
			\frac{1}{T}\int_{0}^{T}\psi(\varphi^t(x))dt\geq  \frac{1}{T}\int_{0}^{\lfloor T\rfloor}\psi(\varphi^t(x))dt
			&\geq \frac{1}{T}2\delta \lfloor T\rfloor\frac{\varepsilon^2}{M-2\varepsilon},
		\end{split}
	\end{equation*}
	which implies $\liminf\limits_{T\to\infty}\frac{1}{T}\int_{0}^{T}\psi(\varphi^t(x))dt>2\delta \frac{\varepsilon^2}{M-2\varepsilon}>0.$ So $\delta_p\not \in V_{\Phi}(x).$ 
\end{proof}

We now consider the well-known example of a planar flow with divergent time averages attributed to Bowen $($see \cite{Takens1995}$)$, which is often referred to as Bowen’s eye. In the example, there are two fixed
hyperbolic saddle points A, B, and heteroclinic orbits connecting the two saddle points. We denote the expanding and contracting eigenvalues of the linearized vector field in $A$ by $\alpha_{+}$ and $-\alpha_{-}$ and in $B$ by $\beta_{+}$ and $-\beta_{-}$. The condition on the eigenvalues which makes the cycle attracting is that the contracting eigenvalues dominate: $\alpha_{-} \beta_{-}>$ $\alpha_{+} \beta_{+}.$
Denote
$$
\lambda=\alpha_{-} / \beta_{+} \text { and } \sigma=\beta_{-} / \alpha_{+},
$$
then their values are positive and their product is bigger than 1. Denote  the flow by $\Phi=\{\phi^{t}\}_{t\in\mathbb{R}}$. 
From \cite{Takens1995}, for any point $x$ inside the eye, $$V_\Phi(x)=\{t\mu_{1}+(1-t)\mu_{2}: t\in[0,1]\}$$ where $\mu_1=\frac{\sigma}{1+\sigma} \delta_A+\frac{1}{1+\sigma} \delta_B$ and $\mu_{2}=\frac{\lambda}{1+\lambda} \delta_B+\frac{1}{1+\lambda} \delta_A.$ Since $\lambda\sigma>1,$ then $\mu_{1}\neq\mu_{2}.$ Since $M(\varphi^1,X)=M(\Phi)=\{t\delta_A+(1-t)\delta_B: t\in[0,1]\},$
then by Lemma \ref{Lemma-pacific}(2) we have $\sharp V_{\varphi^1}(x)>1$, by Lemma \ref{Lemma-pacific-2} we have $V_{\varphi^1}(x)\subseteq V_\Phi(x).$ So $$V_{\varphi^1}(x)=\{t\mu_{1}+(1-t)\mu_{2}: t\in[t_1,t_2]\}$$ for some $0\leq t_1<t_2\leq 1.$
Thus for any point $x$ inside the eye we have $$\emptyset= \omega_{\underline{B}}(x)=\{A\}\cap \{B\}\subsetneq  \omega_{\underline{d}}(x)= \omega_{\overline{d}}(x)= \omega_{\overline{B}}(x)=\{A\}\cup\{B\} \subsetneq \omega_{\varphi^1}(x)\subset\Sigma_1\cup\Sigma_{2}\subsetneq  \overline{\operatorname{orb}(x,\varphi^1)},$$ and thus $x\in \mathscr{C}(101100).$
\begin{Thm}
	For Bowen's eye,   $\mathrm{Leb}(\mathscr{C}(101100))>0.$
\end{Thm}
In \cite{AG2022}, Andersson and Guihéneuf studies the behavior of the Birkhoff averages of continuous functions along the orbits of a smooth reparametrization of an irrational
flow on the 2-torus $\mathbb{T}^{2}$ with two singularities of quadratic order. More precisely, the reparameterized flow $\{\phi^{t}\}_{t\in\mathbb{R}}$ is generated by a vector field
$X=\phi X_0$, where $X_0=(1,\alpha)$ is the infinitesimal generator of the irrational flow on $\mathbb{T}^{2}$ of slope  $\alpha\in\mathbb{R}\setminus\mathbb{Q}$ and $\phi:\mathbb{T}^2\to[0,+\infty)$ is a $C^3$ function which
vanishes at two distinct points $p,q\in\mathbb{T}^2$ and is positive elsewhere so that the Hessians of $\phi$ at $p$ and $q$ are positive definite.
Denote 
$$\mu_\infty=\frac{\sqrt{d_q}}{\sqrt{d_p}+\sqrt{d_q}}\delta_p+\frac{\sqrt{d_p}}{\sqrt{d_p}+\sqrt{d_q}}\delta_q$$
where $d_p$ and $d_q$ are the determinants of the Hessians and $\delta_\mathrm{p}$ and $\delta_q$ are the Dirac point measures at $p$ and $q,$ respectively. From \cite{AG2022} there are subsets $\mathcal{D}, \mathcal{R}  \subset \mathbb{R} \times \mathbb{T} ^2\times \mathbb{T} ^2$ and $\mathcal{A}\subset \mathbb{R}$ with the following characteristics:  $\mathcal{A}$ is of full Lebesgue measure, $\mathcal{R}$ is a dense $G_{\delta }$  set, and $\mathcal{D}$ is dense (but not $G_{\delta }$) ,  such
that for any $(\alpha, p, q) \in \mathbb{T} \times \mathbb{T} ^2\times \mathbb{T} ^2$ and any $\Phi$   be a reparameterized linear flow satisfying
$(SH)$   with angle $\alpha$  and stopping points at $p$ and $q.$  Then $\Phi$ has

$\bullet$ $V_\Phi(x)=\{\mu_\infty\}$ for $\mathrm{Leb}$-a.e. $x$  if $(\alpha, p, q) \in \mathcal{D}$,

$\bullet$ $V_\Phi(x)=\{t\delta_p+(1-t)\mu_\infty: t\in[0,1]\}$ for $\mathrm{Leb}$-a.e. $x$ if $\alpha \in \mathcal{A}$ and $q = p + r(1, \alpha) \mod \mathbb{Z}^2$ for some $r > 0,$

$\bullet$ $V_\Phi(x)=\{t\delta_p+(1-t)\delta_q: t\in[0,1]\}$ for $\mathrm{Leb}$-a.e. $x$ if $(\alpha, p, q) \in \mathcal{R} .$

Similar as Bowen's eye,  by Lemma \ref{Lemma-pacific},  \ref{Lemma-pacific-2} and \ref{Lemma-pacific-3} we have

$\bullet$ $V_{\varphi^1}(x)=\{\mu_\infty\}$ for $\mathrm{Leb}$-a.e. $x$  if $(\alpha, p, q) \in \mathcal{D}$,

$\bullet$ for $\mathrm{Leb}-a.e.$ $x$ there is $0\leq t^*< 1$ such that $V_{\varphi^1}(x)=\{t\delta_p+(1-t)\mu_\infty: t\in[t^*,1]\}$  if $\alpha \in \mathcal{A}$ and $q = p + r(1, \alpha) \mod \mathbb{Z}^2$ for some $r > 0,$

$\bullet$ $V_{\varphi^1}(x)=\{t\delta_p+(1-t)\delta_q: t\in[0,1]\}$ for $\mathrm{Leb}$-a.e. $x$ if $(\alpha, p, q) \in \mathcal{R}.$

Then we have the following result.
\begin{Thm}
	Let $\Phi$   be a reparameterized linear flow satisfying
	$(SH)$   with angle $\alpha$  and stopping points at $p$ and $q.$
	\begin{enumerate}
		\item $x\in \mathscr{C}(101100)$ for $\mathrm{Leb}$-a.e. $x$  if $(\alpha, p, q) \in \mathcal{D}$,
		\item $x\in \mathscr{C}(100100)$ for $\mathrm{Leb}$-a.e. $x$ if $\alpha \in \mathcal{A}$ and $q = p + r(1, \alpha) \mod \mathbb{Z}^2$ for some $r > 0,$
		\item $x\in \mathscr{C}(110100)$ for $\mathrm{Leb}$-a.e. $x$ if $(\alpha, p, q) \in \mathcal{R} .$
	\end{enumerate}   
\end{Thm}

\subsection{Figure-eight attractor}
\begin{figure}[h]\caption{The figure-eight attractor}\label{fig-1}
	\begin{center}

		\tikzset{every picture/.style={line width=0.75pt}} %set default line width to 0.75pt        
		
		\begin{tikzpicture}[x=0.75pt,y=0.75pt,yscale=-1,xscale=1]
			%uncomment if require: \path (0,300); %set diagram left start at 0, and has height of 300
			
			%Shape: Tear Drop [id:dp25652613994599616] 
			\draw   (263.1,191.97) .. controls (242.77,215.07) and (207.78,217.5) .. (184.96,197.41) .. controls (162.13,177.31) and (160.12,142.3) .. (180.45,119.2) .. controls (200.79,96.11) and (235.77,93.67) .. (258.6,113.76) .. controls (286.14,138.02) and (299.92,150.15) .. (299.92,150.15) .. controls (299.92,150.15) and (287.65,164.09) .. (263.1,191.97) -- cycle ;
			%Shape: Tear Drop [id:dp9128456074117399] 
			\draw   (340.07,111.51) .. controls (362.24,90.17) and (397.31,90.64) .. (418.4,112.55) .. controls (439.48,134.46) and (438.6,169.52) .. (416.43,190.86) .. controls (394.25,212.19) and (359.18,211.73) .. (338.1,189.82) .. controls (312.65,163.38) and (299.92,150.15) .. (299.92,150.15) .. controls (299.92,150.15) and (313.31,137.26) .. (340.07,111.51) -- cycle ;
			%Curve Lines [id:da1394102660046488] 
			\draw    (205,161) .. controls (190,147) and (215,121) .. (224,149) ;
			%Curve Lines [id:da810354990029373] 
			\draw    (205,161) .. controls (218,179) and (260,163) .. (243,130) ;
			%Curve Lines [id:da640075477247863] 
			\draw    (176,154) .. controls (171,121) and (211,92) .. (243,130) ;
			%Curve Lines [id:da9319276196934196] 
			\draw    (265,127) .. controls (297,205) and (182,216) .. (176,154) ;
			\draw   (224.79,119.08) .. controls (221.39,115.42) and (217.3,112.88) .. (212.5,111.46) .. controls (217.99,111.75) and (224.15,110.93) .. (231.01,108.98) ;
			\draw   (267.33,154.54) .. controls (269.25,149.93) and (269.89,145.15) .. (269.19,140.2) .. controls (271.19,145.32) and (274.49,150.59) .. (279.1,156.03) ;
			%Curve Lines [id:da553683003617718] 
			\draw    (391.81,144.19) .. controls (398.35,163.64) and (363.99,174.62) .. (369.39,145.71) ;
			%Curve Lines [id:da8561991112271283] 
			\draw    (391.81,144.19) .. controls (388.93,122.17) and (344.38,116.28) .. (343.64,153.39) ;
			%Curve Lines [id:da930766021329569] 
			\draw    (413.99,164.14) .. controls (402.7,195.54) and (353.73,202.04) .. (343.64,153.39) ;
			%Curve Lines [id:da9359429752398283] 
			\draw    (332.32,157.51) .. controls (341.26,73.67) and (438.2,106.74) .. (413.99,164.14) ;
			\draw   (358.21,173.99) .. controls (359.45,178.83) and (361.81,183.02) .. (365.34,186.56) .. controls (360.67,183.68) and (354.86,181.45) .. (347.91,179.87) ;
			\draw   (358,116) .. controls (353.37,116.48) and (349.19,118.12) .. (345.5,120.91) .. controls (348.72,116.96) and (351.48,111.84) .. (353.76,105.58) ;
			\draw   (229.68,205.42) .. controls (235.99,206.48) and (241.77,206.29) .. (247.03,204.8) .. controls (242.3,207.55) and (238.1,211.58) .. (234.41,216.87) ;
			\draw   (372.23,200.08) .. controls (377.75,203.32) and (383.23,205.19) .. (388.67,205.67) .. controls (383.27,206.56) and (377.91,208.83) .. (372.58,212.47) ;
			%Curve Lines [id:da35528135091365565] 
			\draw    (251,81) .. controls (406,56) and (460,99) .. (442,179) ;
			%Curve Lines [id:da31630852940882503] 
			\draw    (442,179) .. controls (392,269) and (145,274) .. (146,153) ;
			%Curve Lines [id:da7730323398774432] 
			\draw    (146,153) .. controls (144.2,134.6) and (150.67,108.74) .. (172.27,87.27) .. controls (198.79,60.92) and (248.13,41.18) .. (333,50) ;
			\draw   (206,73) .. controls (199.52,71.55) and (193.66,71.54) .. (188.38,72.96) .. controls (193.05,70.1) and (197.1,65.8) .. (200.55,60.06) ;
			\draw   (295.93,236.56) .. controls (301.22,240.58) and (306.55,243.02) .. (311.94,243.91) .. controls (306.51,244.57) and (301.04,246.8) .. (295.52,250.59) ;
			
			% Text Node
			\draw (294,156.4) node [anchor=north west][inner sep=0.75pt]    {$p$};
			% Text Node
			\draw (330,195.4) node [anchor=north west][inner sep=0.75pt]    {$L_{2}$};
			% Text Node
			\draw (263,96.4) node [anchor=north west][inner sep=0.75pt]    {$L_{1}$};

		\end{tikzpicture}
		
	\end{center}
\end{figure}

Now we consider the 'figure-eight attractor': Figure \ref{fig-1} shows a flow with a stationary saddle point $p$ whose stable and unstable manifolds coincide. Let $f$ be the time-1 map, and assume that $|\det Df(p)| < 1.$ Then every point in the basin of this attractor is generic with respect to its unique invariant measure $\delta_p$.
Then for any $x\neq p,$ one has $V_f(x)=\{\delta_p\},$ and $\omega_{f}(x)=L_1, L_2\text{ or } L_1\cup L_2,$ and $\omega_{f}(x)\subsetneq \overline{\operatorname{orb}(x,f)}.$ This implies that for any $x\neq p$  $$\emptyset\subsetneq  \omega_{\underline{B}}(x)=  \omega_{\underline{d}}(x)= \omega_{\overline{d}}(x)= \omega_{\overline{B}}(x)=\{p\}\subsetneq \omega_{f}(x)\subsetneq  \overline{\operatorname{orb}(x,f)},$$ and thus $x\in \mathscr{C}(011100).$
\begin{Thm}
	In the figure-eight attractor,   $x\in \mathscr{C}(011100)$ for any $x\neq p$, and thus $\mathscr{C}(011100)$ has full  Lebesgue measure.
\end{Thm}

\subsection{Axiom A system}
Let $f: M \to M$ be a diffeomorphism on a compact Riemannian manifold $M.$  $f$ is said to  satisfies \emph{Axiom A} if the non-wandering set $\Omega(f)$ is hyperbolic and $\Omega(f)=\overline{\{x\in M: x \text { is periodic}\}}$. From Smale's spectral decomposition theorem (see, for example, \cite[Theorem 23.14]{DGS}), $\Omega$ is a finite union of disjoint closed $f$-invariant sets $\Omega_j$ $(1 \leq  j \leq  s)$ such that $f: \Omega_j\to \Omega_j$ is topologically transitive. Such sets $\Omega_j$ are called \emph{basic sets}. $f$ is said to be \emph{Anosov} if $M$ is hyperbolic. Every Anosov diffeomorphism  satisfies Axiom A.
  A compact $f$-invariant set $\Lambda\subset M$ is called an \emph{attractor} if there is a neighborhood $U$ of $\Lambda$ called its basin such that $f^nx\to \Lambda$ for every $x\in U.$   
From \cite{Bowen2}, if $\Omega_j$ is an attractor and $f$ is a $C^2$ diffeomorphism, there is a unique $f$-invariant Borel probability measure $\mu$ on $\Omega_j$  such that $S_\mu=\Omega_j$ and  for $\mathrm{Leb}$-a.e. $x\in U$ one has $V_f(x)=\{\mu\}$ where $U$ is the basin of $\Omega_j.$ 
 It's well known that periodic points are dense in $\Omega_j$. So if $\Omega_j$ is not consist of a single periodic orbit,  then there is more than one periodic orbit and thus $\omega_{\underline{B}}(x)=\emptyset$ for any $x$ with $\omega_{f}(x)=\Omega_j.$
Then  for $\mathrm{Leb}$-a.e. $x\in \Omega_j$ one has 
$$\emptyset= \omega_{\underline{B}}(x)\subsetneq  \Omega_j=S_\mu=  \omega_{\underline{d}}(x)= \omega_{\overline{d}}(x)= \omega_{\overline{B}}(x)= \omega_{f}(x)=  \overline{\operatorname{orb}(x,f)},$$ and thus $x\in \mathscr{C}(101111),$
for $\mathrm{Leb}-a.e.$ $x\in U\setminus\Omega_j$ one has 
$$\emptyset= \omega_{\underline{B}}(x)\subsetneq  \Omega_j=S_\mu=  \omega_{\underline{d}}(x)= \omega_{\overline{d}}(x)= \omega_{\overline{B}}(x)= \omega_{f}(x)\subsetneq  \overline{\operatorname{orb}(x,f)},$$ and thus $x\in \mathscr{C}(101110).$

\begin{Thm}\label{Thm-leb-axiom}
	Let $f: M \to M$ be a $C^2$ Axiom A diffeomorphism and $\Omega_j$ be a basic set which is not consist of a single periodic orbit. Assume that  $\Omega_j$ is  an attractor with basin $U$.
	\begin{enumerate}
		\item  If $\mathrm{Leb}(\Omega_j)>0,$ then  $\mathrm{Leb}(\mathscr{C}(101111))>0.$
		\item If $\mathrm{Leb}(U\setminus\Omega_j)>0,$ then  $\mathrm{Leb}(\mathscr{C}(101110))>0.$
	\end{enumerate}  In particular, if $f$ is a transitive Anosov diffeomorphism, then $M=U=\Omega=\Omega_j$ for any $1\leq j\leq s$ and thus $x\in \mathscr{C}(101111)$ for $\mathrm{Leb}$-a.e. $x\in M.$ 
\end{Thm}
\begin{Rem}
	Theorem \ref{Thm-leb-axiom}(2) can be applied to the solenoid  introduced by Smale in \cite{Smale1967}.
	Consider the solid torus $\mathcal{T}=S^1\times D^2$, where $S^1=[0,1] \mod 1$ and $D^2=\{(x,y)\in\mathbb{R}^2: x^2+y^2\leq1\}.$ Fix $\lambda\in(0,1/2)$, and define $F: \mathcal{T}\to\mathcal{T}$ by
	$$F(\phi,x,y)=\begin{pmatrix}2\phi,\lambda x+\frac12\cos2\pi\phi,\lambda y+\frac12\sin2\pi\phi\end{pmatrix}.$$
	The set $S=\cap_{n=0}^\infty F^n(\mathcal{T})$ is called a solenoid. It's known that $S$ is the non-wandering set of $F: \mathcal{T}\to\mathcal{T}$, $F: S\to S$ is hyperbolic,  topologically mixing and locally maximal,  and  $S$ is an attractor with basin $\mathcal{T},$  see, for example, \cite{BS2004}. This implie that $F: \mathcal{T}\to\mathcal{T}$ is a  Axiom A diffeomorphism, $S$ is the unique basic set and an attractor with basin $\mathcal{T}$. It's easy to see that $\Omega_j$  is not consist of a single periodic orbit and  $\mathcal{T}\setminus S\neq\emptyset$. So $\mathrm{Leb}(\mathcal{T}\setminus S)>0,$ and thus $\mathrm{Leb}(\mathscr{C}(101110))>0.$
\end{Rem}
\subsection{Almost Anosov system}
Almost Anosov system was introduced by Hu and Young in \cite{Hu-Young-1995}.
Let $f: M \to M$ be a $C^2$ diffeomorphism on a two-dimensional compact Riemannian manifold $M.$
$f$ is said to be almost Anosov if  $f$ satisfes the following two conditions:
\begin{enumerate}
	\item Assumption I.
	\begin{enumerate}
		\item $f$ has a fixed point $p$, i.e. $f(p)=p.$
		\item There exist a constant $\kappa^s<1$, a continuous function $\kappa^\mathrm{u}$ with
		$$\kappa^u(x)\begin{cases}=1,&\text{ at }x=p,\\>1,&\text{ elsewhere,}\end{cases}$$
		and a decomposition of the tangent space $T_xM$ at every $x\in M$ into
		$$T_xM=E_x^u\oplus E_x^s$$
		such that
		$$|Df_xv|\leq\kappa^s|v|,\quad\forall v\in E_x^s,$$
		$$|Df_xv|\geq\kappa^u(x)|v|,\quad\forall v\in E_x^u,$$
		and
		$$|Df_pv|=|v|,\quad\forall v\in E_p^u.$$
	\end{enumerate}
	\item Assumption II. $f$ is topologically transitive on $M.$
\end{enumerate}
\begin{Thm}
	Let $f: M \to M$ ba an almost Anosov system. Then $x\in \mathscr{C}(101011)$ for $\mathrm{Leb}-a.e.$ $x\in M.$ 
\end{Thm}
\begin{proof}
	We following the argument of \cite[Theorem B]{Hu-Young-1995}. For any $n\in \mathbb{N},$ let $U_n$ be a rectangle of the type in \cite[Lemma 5.1]{Hu-Young-1995} with $\mathrm{diam}(U)<\frac{1}{n}$. 	Let $g_n:M\setminus U_n\to M\setminus U_n$  be the first return map. That is,  for $x\in M\setminus U_n$, if $\tau(x)$ is the smallest positive integer with $f^{\tau(x)}x\in M\setminus P$, then $g_n(x)=f^{\tau(x)}(x).$
	Let  $\mu_n$ be the $g_n$-invariant measure as in \cite[Lemma 5.2]{Hu-Young-1995}. Then from the proof of   \cite[Lemma 5.2]{Hu-Young-1995},    conditional measures of $\mu_n$ on the unstable manifolds is equivalent to Lebesgue measure, and thus $M\setminus U_n\subseteq S_{\mu_n}$. By \cite[Lemma 5.2]{Hu-Young-1995},  $\mu_n$ is $g_n$-ergodic. Apply the
	Birkhoff ergodic theorem to $(g_n,\mu_n),$ one has $\omega_{g_n}(x)\supseteq S_{\mu_n}\supseteq M\setminus U_n$ for $\mu_n$-a.e. $x\in M\setminus U_n.$ This implies  $\omega_f(x)\supseteq  M\setminus U_n$ for $\mathrm{Leb}$-a.e. $x\in M\setminus U_n.$ From \cite[Lemma 5.1]{Hu-Young-1995}, $U_n$ can be  chosen such that $U_{n+1}\subseteq U_n$ for any $n\in\mathbb{N}.$ Then for any $m,n\in\mathbb{N}$ one has $\omega_f(x)\supseteq  M\setminus U_{n+m}\supseteq  M\setminus U_{n}$ for $\mathrm{Leb}$-a.e. $x\in M\setminus U_{n+m}.$ 
	This implies that for any $n\in\mathbb{N}$ one has $\omega_f(x)\supseteq  M\setminus U_{n}$ for $\mathrm{Leb}$-a.e. $x\in M$ and thus   $\omega_f(x)=M$ for $\mathrm{Leb}$-a.e. $x\in M.$  
	From \cite[Theorem B]{Hu-Young-1995}, $V_f(x)=\{\delta_p\}$ for $\mathrm{Leb}$-a.e. $x\in M.$  Hu and Young stated that the almost Anosov system can be made to be topologically conjugate to the original toral hyperbolic automorphism, and thus  $ \omega_{\underline{B}}(x)=\emptyset,$ $\omega_{\overline{B}}(x)=M$ if $\omega_f(x)=M$. So  for $\mathrm{Leb}$-a.e. $x\in M$ we have
	$$\emptyset= \omega_{\underline{B}}(x)\subsetneq  \{p\}=  \omega_{\underline{d}}(x)= \omega_{\overline{d}}(x)\subsetneq M=\omega_{\overline{B}}(x)= \omega_{f}(x)=  \overline{\operatorname{orb}(x,f)},$$  which implies $x\in\mathscr{C}(101011).$
\end{proof}

From \cite[Theorem B]{Hu-Young-1995}, given an almost Anosov system, $V_f(x)=\{\delta_p\}$ for $\mathrm{Leb}$-a.e. $x\in M.$ We pose the following question. If the following question has a positive answer, then  there will be  more cases can be observable  in the sense of  Lebesgue measure.
\begin{mainquestion}
	Given a toral hyperbolic automorphism $g:\mathbb{T}^2\to \mathbb{T}^2$ and a nonempty compact connected set $K \subseteq M (g,  \mathbb{T}^2),$ is there a dynamical system $f:\mathbb{T}^2\to \mathbb{T}^2$ which is topologically conjugate to $g:\mathbb{T}^2\to \mathbb{T}^2$ such that $V_f(x)=K$ for $\mathrm{Leb}$-a.e. $x\in M$?
\end{mainquestion}

\subsection{Volume-preserving diffeomorphisms}
Denote $\mathrm{Diff}_{\mathrm{vol}}^1(M)$ the space of $C^1$ volume-preserving diffeomorphisms of a compact manifold $M.$ By Theorem \ref{thm-density-basic-property}(5) we have the following results.

\begin{Thm}\label{Thm-volume}
	Assume that $f\in \mathrm{Diff}_{\mathrm{vol}}^1(M)$. Then for $\mathrm{Leb}$-a.e. $x\in M$ there is $$\alpha_1\alpha_21111\in\{001111, 101111, 011111 \}$$ such that $x\in \mathscr{C}(\alpha_1\alpha_21111).$ If further  volume is $f$-ergodic, then there is $$\alpha_1\alpha_21111\in\{001111, 101111, 011111 \}$$ such that $x\in \mathscr{C}(\alpha_1\alpha_21111)$ for $\mathrm{Leb}$-a.e. $x\in M.$
\end{Thm}
\begin{Rem}
	(1) It's well known that every rotation on $\mathbb{S}^1$ is volume-preserving. By Theorem \ref{Thm-rotation} one has $x\in \mathscr{C}(011111)$ for $\mathrm{Leb}$-a.e. $x\in M.$
	
	(2) Every hyperbolic toral automorphism is ergodic with respect to Lebesgue measure.  By Theorem \ref{Thm-leb-axiom}, one has $x\in \mathscr{C}(101111)$ for $\mathrm{Leb}$-a.e. $x\in M.$
	
	(3) By \cite[Theorem B]{ACW}, $C^1$ generically, a volume-preserving diffeomorphism $f$ of a compact connected manifold $M$ with positive metric entropy has ergodic hyperbolic Lebesgue measure and admits a non-uniformly hyperbolic dominated splitting .  Using \cite[Theorem 3.17]{ST2015}, there is a horseshoe and thus $\cap_{\mu\in M(f, M)}S_{\mu}=\emptyset.$ So by Theorem \ref{Thm-volume} one has $x\in \mathscr{C}(101111)$ for $\mathrm{Leb}$-a.e. $x\in M.$
\end{Rem}

\subsection{Systems with physical measure}
Let $f: M \to M$ be a diffeomorphism on a compact Riemannian manifold $M.$ An invariant probability measure $\mu$ is  called \emph{physical} if $\mathrm{Leb}(G_\mu)>0.$
\begin{Thm}
	Let $f: M \to M$ be a diffeomorphism on a compact Riemannian manifold $M$. If there is a physical measure,   then $\mathrm{Leb}(\cup_{\alpha_1\alpha_21\alpha_4\alpha_5\alpha_6\in\mathfrak{A}} \mathscr{C}(\alpha_1\alpha_21\alpha_4\alpha_5\alpha_6))>0.$ 
\end{Thm}

\section{Applications: proofs of Corollary \ref{Corollary A}, Corollary \ref{Corollary B} and Corollary \ref{Corollary C}}\label{section 8}
In this section, we discuss the applications of our main results to $\beta$-shifts, $C^{1+\alpha}$ surface diffeomorphisms, Mañé diffeomorphisms and prove Corollary \ref{Corollary A}, Corollary \ref{Corollary B} and Corollary \ref{Corollary C}.
\subsection{$\beta$-shifts} Let us recall the definition of $\beta$-shift $(\beta>1)$ in \cite[Chapter 7.3]{Walters}. If $\beta \geq 2$ is an integer, $\beta$-shift is the full shift of $\beta$ symbols. So we only need to recall the definition in the case that $\beta$ is not an integer. Consider the expansion of 1 in powers of $\beta^{-1}$, i.e. $1=\sum_{n=1}^{\infty} a_{n} \beta^{-n}$ where $a_{1}=[\beta]$ and $a_{n}=\left[\beta^{n}-\sum_{i=1}^{n-1} a_{i} \beta^{n-i}\right] .$ Here $[t]$ denotes the integral part of $t \in \mathbb{R} .$ Let $k=[\beta]+1$. Then $0 \leq a_{n} \leq k-1$ for all $n$ so we can consider $a=\left\{a_{n}\right\}_{1}^{\infty}$ as a point in the space $X=\prod_{n=1}^{+\infty} Y$ where $Y=\{0,1, \cdots, k-1\} .$ Consider the lexicographical ordering on $X$, i.e. $x=\left\{x_{n}\right\}_{1}^{\infty}<y=\left\{y_{n}\right\}_{1}^{\infty}$ if $x_{j}<y_{j}$ for the smallest $j$ with $x_{j} \neq y_{j}$. Let $\sigma: X \rightarrow X$ denote the one-sided shift transformation. Note that $\sigma^{n} a \leq a$ for all $n \geq 0$. Let
$$X_{\beta}:=\left\{x=\left\{x_{n}\right\}_{1}^{\infty}: x \in X \text{ and }\sigma^{n}x \leq a \text{ for all } n \geq 0\right\}.$$
Then $X_{\beta}$ is a closed subset of $X$ and $\sigma\left(X_{\beta}\right)=X_{\beta}$. Then $(X_{\beta}, \sigma)$ is called a $\beta$-shift. The topological entropy of a $\beta$-shift $(\beta>1)$ is $\log \beta$  \cite[Chapter 7.3]{Walters}. By the definition of $X_\beta$ above, $X_{\beta_1}\subsetneq X_{\beta_2}$ for $\beta_1<\beta_2$ \cite{P}. And every $\beta$-shift is topologically transitive.

Now, we give the proof of Corollary \ref{Corollary A}.

\emph{Proof of Corollary \ref{Corollary A}.} Suppose that $(X,f)=(X_\beta,\sigma)$ is a $\beta$-shift ($\beta>1$). From \cite{P}, see also \cite[Proposition 3.2]{Sm}, we have $\{\beta\in (1,+\infty):(X_{\beta},\sigma)\text{ is a subshift of finite type} \}$ is dense in $(1,+\infty)$. Recall that every subshift of finite type satisfies the shadowing property. Thus for any $\beta\in(1,+\infty)$, there exists an increase sequence $\{\beta_{i}\}_{i=1}^{\infty}$ such that $\lim\limits_{i\to\infty}\beta_{i}=\beta$ and $(X_{\beta_{i}},\sigma)$ satisfies shadowing property for any $i\geq 1$. Fix a nonempty open set $U\subseteq X$ and $\eta>0,$ there is $I>0$ such that $X_{\beta_{I}}\cap U\neq \emptyset$ and $h_{top}(f,X_{\beta_{I}})>h_{top}(f)-\eta.$ 
Let $U'=U\cap X_{\beta_{I}}.$ Then $U'$ is a nonempty open set in $X_{\beta_{I}}.$ Note for any  $\alpha_1\alpha_2\alpha_3\alpha_4\alpha_5\alpha_6\in\mathfrak{A}_h$, we have
$$\mathscr{C}(\alpha_1\alpha_2\alpha_3\alpha_4\alpha_5\alpha_6)\cap X_{\beta_{I}}\cap U'\subseteq \mathscr{C}(\alpha_1\alpha_2\alpha_3\alpha_4\alpha_5\alpha_6)\cap U.$$Hence, by Theorem \ref{Theorem B}, we have $$h_{top}(f,\mathscr{C}(\alpha_1\alpha_2\alpha_3\alpha_4\alpha_5\alpha_6)\cap U)\geq h_{top}(f,X_{\beta_{I}})>h_{top}(f)-\eta.$$
By the arbitrariness of $\eta$, we finish the proof.\qed 

\subsection{$C^{1+\alpha}$ surface diffeomorphisms} First of all, let us recall some basic notations for hyperbolic ergodic measures, basic sets and horseshoes. Let $f$ be a $C^1$ diffeomorphism over a compact Riemannian manifold $M.$ 
An ergodic measure is called {\it hyperbolic} if its all Lyapunov exponents are non-zero. A \emph{basic set} for $f$ is a topologically transitive, locally maximal hyperbolic set, if further it is totally disconnected and not finite, then we call it a \emph{horseshoe}.  Every basic set is expansive \cite[Corollary 6.4.10]{Katok1} and has the shadowing property \cite[Theorem 18.1.2]{Katok1}. Hence, every horseshoe is topologically transitive topologically Anosov. If further $f$ is $C^{1+\alpha}$, then it was proved in \cite{Katok,Katok1} that if the a hyperbolic ergodic measure has positive metric entropy, then its metric entropy can be approximated by the topological entropy of horseshoes.

Now, we give the proof of Corollary \ref{Corollary B}.

\emph{Proof of Corollary \ref{Corollary B}.} If $h_{top}(f)=0$, then it is clear the results in Corollary \ref{Corollary B} hold. Now we always assume that $h_{top}(f)>0$. For a surface diffeomorphism,  any ergodic measure with positive metric entropy should be hyperbolic by classical  Ruelle's inequality on metric entropy and Lyapunov exponents \cite{Ru}. In other words,  any  surface diffeomorphism $f$ satisfies that  $$ h_{top}(f)=\sup\{h_\mu: \mu\text{ is     ergodic  }\}=\sup\{h_\mu:\mu\text{ is   hyperbolic and ergodic}\}$$ and thus we have $$ h_{top}(f)=\sup\{h_{top}(f,\Lambda):\Lambda\text{ is a horseshoe}\}.$$ By Theorem \ref{Theorem B}, for any  $\alpha_1\alpha_2\alpha_3\alpha_4\alpha_5\alpha_6\in\mathfrak{A}_h$, we have $$h_{top}(f,\mathscr{C}(\alpha_1\alpha_2\alpha_3\alpha_4\alpha_5\alpha_6))=\sup\{h_{top}(f,\Lambda):\Lambda\text{ is a horseshoe}\}=h_{top}(f).$$
\qed

\subsection{Mañé diffeomorphisms} First of all, let's recall some basic notations for partially hyperbolic diffeomorphisms. Suppose that $f$ is a $C^1$ diffeomorphism on a compact $C^\infty$ Riemannian manifold $M$ and $\Lambda$ is a $f$-invariant set. For two $Df$-invariant bundles $E,F\subset TM|_\Lambda$, we say that $F$ is \emph{dominated} by $E$ if there exist constants $C>0$ and $\lambda\in(0,1)$ such that for any $x\in\Lambda$ and any $n\in\mathbb{N}$, we have $$\|Df^n|_{F(x)}\|\cdot\|Df^{-n}|_{E(f^nx))}\|\leq C\lambda^n.$$ And we denote by $F\oplus_\prec E$ when $F$ is dominated by $E$. A  $f$-invariant set $\Lambda$ is said to admit a \emph{dominated splitting} if there exists a non-trivial $Df$-invariant splitting $TM|_\Lambda=F\oplus_\prec E$. More generally, $\Lambda$ is said to admit a dominated splitting $TM|_\Lambda=E_1\oplus_\prec E_2\oplus_\prec\cdots\oplus_\prec E_k$ if for any $1\leq l\leq k-1$, we have that $$(E_1\oplus\cdots\oplus E_l)\oplus_\prec(E_{l+1}\oplus\cdots\oplus E_k).$$
A $Df$-invariant bundle $F\subset TM|_\Lambda$ is said to be \emph{uniformly contracted} (by $Df$), if there exist constants $C>0$ and $\lambda\in(0,1)$ such that for any $x\in\Lambda$ and any $n\in\mathbb{N}$, we have $\|Df^n|_{F(x)}\|\leq C\lambda^n$; is said to be \emph{uniformly expanded} (by $Df$) if it is uniformly contracted by $Df^{-1}$. We say that $\Lambda$ is said to be \emph{partially hyperbolic} if there exists a $Df$-invariant splitting $TM|_\Lambda=E^s\oplus_\prec E_1^c\oplus_\prec\cdots\oplus_\prec E_k^c\oplus_\prec E^u$ satisfying that
\begin{enumerate}[(1)]
	\item $E^s$ is uniformly contracted and $E^u$ is uniformly expanded;
	\item $E^s$ or $E^u$ can be trivial, but cannot be trivial simultaneously.
\end{enumerate} 
From the construction of \cite{Climenhaga-Fisher-Thompson-2019}, we know that every $f\in\mathcal{U}_{\rho,r}$ is partially hyperbolic with one-dimensional center bundle and one-dimensional unstable bundle. 

Suppose that $\mu$ is hyperbolic ergodic measure, has a positive Lyapunov exponent and a negative Lyapunov exponent. Let $E_1\oplus\cdots E_l\oplus E_{l+1}\cdots E_{l+s}$ be its Oseledets splitting with corresponding Lyapunov exponents $\lambda_1<\cdots<\lambda_l<0<\lambda_{l+1}<\cdots<\lambda_{l+s}$ defined for $\mu$-a.e. $x\in M$. Denote $E^-=E_1\oplus\cdots E_l$ and $E^+=E_{l+1}\cdots E_{l+s}$. We say that $\mu$ admits a dominated splitting corresponding to the stable/unstable subspaces of its Oseledets splitting if its support $S_\mu$ admits a dominated splitting $F\oplus_\prec E$ such that $F(x)=E^-(x)$ and $E(x)=E^+(x)$ for $\mu$-a.e. $x\in M$. By Ruelle's inequality on metric, every hyperbolic ergodic measure with positive metric entropy has a positive Lyapunov exponent and a negative Lyapunov exponent. From \cite{Gelfert-2016,Gelfert-2022}, we know that if a hyperbolic ergodic measure has positive metric entropy and admits a dominated splitting corresponding to the stable/unstable subspaces of its Oseledets splitting, then its metric entropy can be approximated by the topological entropy of  basic sets.

Now, we give the proof of Corollary \ref{Corollary C} by using the notations in \cite{Climenhaga-Fisher-Thompson-2019}.

\emph{Proof of Corollary \ref{Corollary C}.} Suppose that $g\in\mathcal{U}_{\rho,r}$ satisfies $r(h+\log L)+H(2r)<h_{top}(g)$ and is partially hyperbolic with $T\mathbb{T}^d=E^s\oplus_\prec E^c\oplus_\prec E^u$ with $\dim E^c=1$. By \cite[Theorem 3.3, Lemma 5.8]{Climenhaga-Fisher-Thompson-2019}, we have $P(\mathcal{C},0)\leq r(h+\log L)+H(2r)<h_{top}(g)$, where the definition of $P(\mathcal{C},0)$ can be found in \cite[section 3.3]{Climenhaga-Fisher-Thompson-2019}. Given $0<\eta<h_{top}(g)-P(\mathcal{C},0)$, by the variational principle, there exists a ergodic measure $\mu$ such that $h_\mu>h_{top}(g)-\eta>P(\mathcal{C},0)\geq0$. Hence, by \cite[Lemma 3.5]{Climenhaga-Fisher-Thompson-2019}, one has $\mu(A^+)=0$, where $$A^+=\{x\in\mathbb{T}^d:\text{there exists }K(x)\text{ such that }\frac{1}{n}S_n^g\chi(x)<r\text{ for any }n>K(x)\}.$$ Thus for $\mu$-a.e. $x\in\mathbb{T}^d$, there exists an increasing sequence of positive integers $\{n_i\}_{i\geq1}$ such that $\frac{1}{n_i}S_{n_i}^g\chi(x)\geq r$ for any $i\in\mathbb{N}$. Then by \cite[(4.1)]{Climenhaga-Fisher-Thompson-2019}, there exists $0<\theta_r(g)<1$, such that $\|Dg^{n_i}|_{E^{cs}(x)}\|\leq(\theta_r(g))^{n_i}$, where $E^{cs}=E^c\oplus E^s$. Hence, for $\mu$-a.e. $x\in\mathbb{T}^d$, all the Lyapunov exponents for $E^{cs}(x)$ are negative. As a result, we have that $\mu$ is hyperbolic. Since $g$ is partially hyperbolic with one-dimensional center bundle, one has that $\mu$ admits a dominated splitting corresponding to the stable/unstable subspaces of its Oseledets splitting. Thus $h_\mu$ can be approximated by the topological entropy of basic sets.  It is clear that a basic set with positive topological entropy has infinite points and thus is topologically transitive topologically Anosov. Therefore, by Theorem \ref{Theorem B}, for any  $\alpha_1\alpha_2\alpha_3\alpha_4\alpha_5\alpha_6\in\mathfrak{A}$, we have $$h_{top}(g,\mathscr{C}(\alpha_1\alpha_2\alpha_3\alpha_4\alpha_5\alpha_6))\geq h_\mu>h_{top}(g)-\eta.$$
By the arbitrariness of $\eta$, we finish the proof.\qed

\bigskip

\textbf{Acknowledgements.} The authors are grateful to Prof.  Yu Huang, Wenxiang Sun and Xiaoyi Wang for their numerous remarks and fruitful discussions.  The authors are supported by NSFC No. 12471182  and  Natural Science Foundation of Shanghai No. 23ZR1405800.

\end{document}